\def\Aut{\operatorname{Aut}}
\def\End{\operatorname{End}}
\def\Dom{\operatorname{Dom}}
\def\newspan{\operatorname{span}}
\def\clsp{\operatorname{\overline{span}}}
\def\ker{\operatorname{ker}}
\def\id{\operatorname{id}}
\def\cov{\operatorname{cov}}
\def\Ad{\operatorname{Ad}}
\def\id{\operatorname{id}}
\def\lcm{\operatorname{lcm}}
\def\her{\operatorname{Her}}
\def\C{\mathbb{C}}
\def\R{\mathbb{R}}
\def\N{\mathbb{N}}
\def\Z{\mathbb{Z}}
\def\T{\mathbb{T}}
\def\Q{\mathbb{Q}}
\def\TT{\mathcal{T}}
\def\LL{\mathcal{L}}
\def\OO{\mathcal{O}}
\def\KK{\mathcal{K}}
\def\NN{\mathcal{N}}
\def\HH{\mathcal{H}}
\def\PP{\mathcal{P}}
\def\RR{\mathcal{R}}
\def\FF{\mathcal{F}}
\def\QQ{\mathcal{Q}}
\def\NT{\mathcal{N}\mathcal{T}}
\newcommand{\nx}{\mathbb N^{\times}}
\newcommand{\inv}{^{-1}}
\def\sn{\mathcal N}
\newcommand{\nxnx}{{\mathbb N \rtimes \mathbb N^\times}}
\newcommand{\qxqx}{{\mathbb Q \rtimes \mathbb Q^*_+}}
\newcommand{\qn}{\mathcal Q_\mathbb N}
\newcommand{\qx}{\mathbb Q^*_+}
\newcommand{\primes}{\mathcal P}
\newcommand{\notdiv}{\nmid}
\def\add{\textup{add}}
\def\mult{\textup{mult}}
\newtheorem{thm}{Theorem}[section]
\newtheorem{cor}[thm]{Corollary}
\newtheorem{lemma}[thm]{Lemma}
\newtheorem{prop}[thm]{Proposition}
\theoremstyle{definition}
\newtheorem{definition}[thm]{Definition}
\theoremstyle{remark}
\newtheorem{remark}[thm]{Remark}
\newtheorem{example}[thm]{Example}
\numberwithin{equation}{section}
\begin{document}

\title[Boundary quotients of a Toeplitz algebra]{Boundary quotients of the Toeplitz algebra\\ of the affine semigroup over the natural numbers}

\author[Brownlowe]{Nathan Brownlowe}

\author[an Huef]{Astrid an Huef}

\author[Laca]{Marcelo Laca}

\author[Raeburn]{Iain Raeburn}

\address{Nathan Brownlowe,  School of Mathematics and Applied Statistics\\
University of Wollongong\\
NSW 2522\\
Australia}
\email{nathanb@uow.edu.au}

\address{Astrid an Huef and Iain Raeburn, Department of Mathematics and Statistics\\
University of Otago\\PO Box 56\\ Dunedin 9054\\
New Zealand}
\email{astrid@maths.otago.ac.nz, iraeburn@maths.otago.ac.nz}

\address{Marcelo Laca, Department of Mathematics and Statistics\\
University of Victoria\\
Victoria, BC V8W 3R4\\
Canada}
\email{laca@math.uvic.ca}

\begin{abstract}
We study the Toeplitz algebra $\TT(\N\rtimes\N^\times)$ and three quotients of this algebra: the $C^*$-algebra $\qn$ recntly introduced by Cuntz, and two new ones, which we call the additive and multiplicative boundary quotients. These quotients are universal for Nica-covariant representations of $\N\rtimes\N^\times$ satisfying extra relations, and can be realised  as partial crossed products.  We use the structure theory for partial crossed products to prove a uniqueness theorem for the additive boundary quotient, and use the recent analysis of KMS states on $\TT(\nxnx)$ to describe the KMS states on the two quotients. We then show that $\TT(\nxnx)$, $\qn$ and our new quotients are all interesting new examples for Larsen's
theory of Exel crossed products by semigroups. 
\end{abstract}
\thanks{This research has been supported by the Natural Sciences and Engineering Research Council of Canada and by the Australian Research Council}
\date{20 September 2010}
\maketitle

\section{Introduction}\label{intro}

Laca and Raeburn \cite{lr} have recently studied the Toeplitz algebra $\TT(\nxnx)$ of the   semidirect product of the additive semigroup $\N=\{n\in \Z:n\geq 0\}$ by the natural action of the multiplicative semigroup $\nx=\{n\in \Z:n>0\}$. They proved that $\nxnx$ is the positive cone in a quasi-lattice ordering of the enveloping group $\qxqx$ \cite[Proposition~2.2]{lr}, which means that one can run the pair $(\qxqx,\nxnx)$ through the general theory of the Toeplitz algebras of quasi-lattice ordered groups \cite{n,lr1,elq,cl}. Thus we know from \cite{cl} that the Toeplitz algebra $\TT(\nxnx)$  has a distinguished boundary quotient, which we will call the Crisp-Laca quotient. This quotient is  simple and purely infinite \cite{cl}, so Laca and Raeburn conjectured that the Crisp-Laca quotient is the purely infinite  simple algebra $\qn$ which Cuntz had associated to $\nxnx$ \cite{c2}. They verified this conjecture in \cite[Theorem~6.3]{lr}.

The Toeplitz algebra $\TT(\nxnx)$ carries a very interesting dynamics $\sigma$ arising from the dual action of $(\qx)^\wedge$ and the embedding of $\R$ in $(\qx)^\wedge$, which takes $t\in\R$ to the character $r\mapsto r^{it}$. The dynamical system $(\TT(\nxnx),\R,\sigma)$ has a rich supply of KMS$_\beta$ states for $\beta\in [1,\infty]$, and exhibits a phase transition at $\beta=2$ \cite[Theorem~7.1]{lr}; if we distinguish between KMS$_\infty$ states and ground states, as in \cite{CM2}, then there is a second phase transition at $\beta=\infty$ \cite[Theorem~7.1(4)]{lr}.

The main technical tool in the analysis of \cite{lr} is a description of $\TT(\nxnx)$ as a partial crossed product $C(\Omega)\rtimes(\qxqx)$ arising from work of Exel, Laca and Quigg \cite{elq}. The compact space $\Omega$ is the spectrum of the commutative $C^*$-subalgebra of $\TT(\nxnx)$ generated by the range projections of the generating isometries, and the Crisp-Laca quotient is, almost by definition, the quotient $C(\partial\Omega)\rtimes(\qxqx)$ associated to a ``minimal boundary'' $\partial\Omega$ of $\Omega$. Since the semigroup $\nxnx$ is a product, we can go to infinity along the additive subsemigroup $\N$, or along the multiplicative subsemigroup $\nx$. This gives, respectively, an additive boundary $\Omega_{\add}$ and a multiplicative boundary $\Omega_{\mult}$; the Crisp-Laca boundary $\partial\Omega$ is the intersection $\Omega_{\add}\cap \Omega_{\mult}$. In \cite{lr}, the set $\Omega_{\add}$ played a crucial role in the construction of KMS$_\beta$ states for $\beta\in [1,2]$ (see \cite[Proposition~9.1]{lr}). Both $\Omega_{\add}$ and $\Omega_{\mult}$ determine quotients $C(\Omega_{\add})\rtimes(\qxqx)$ and $C(\Omega_{\mult})\rtimes(\qxqx)$ of $C(\Omega)\rtimes(\qxqx)\cong\TT(\nxnx)$. We call the corresponding quotients of $\TT(\nxnx)$ the additive boundary quotient $\TT_{\add}(\nxnx)$ and the multiplicative boundary quotient $\TT_{\mult}(\nxnx)$. 

The present project started when we noticed that these new boundary quotients are very interesting in their own right, and set out to see what we could say about them. We find that the phase transition at $\beta=2$ arises from the quotient map of $\TT_{\add}(\nxnx)$ onto $\qn$, and that the phase transition at $\beta=\infty$ arises from the quotient map of $\TT(\nxnx)$ onto $\TT_{\add}(\nxnx)$. We then show that all four algebras provide interesting new examples for Larsen's theory of Exel crossed products by semigroups \cite{l}. We prove, answering a question raised by Larsen, that Cuntz's $\qn$ is an Exel crossed product by an endomorphic action of the semigroup $\nx$ on $C(\T)$. There is a parallel realisation of the multiplicative boundary quotient as an Exel crossed product for an action of $\nx$ on the Toeplitz algebra $\TT(\N)$. The additive boundary quotient and $\TT(\nxnx)$ itself fit into the picture as Toeplitz analogues of Exel crossed products for the same endomorphic actions of $\nx$ on $C(\T)$ and $\TT(\N)$.
 
We begin in \S\ref{the abq sec} by finding presentations of $\TT_{\add}(\nxnx)$ and $\TT_{\mult}(\nxnx)$, and identifying  the  Nica-covariant isometric representations $V$ of $\nxnx$ that give faithful representations of $\TT_{\add}(\nxnx)$ (Theorem~\ref{faithrepsomegaB}). Our main tools are the presentation of $\TT(\nxnx)$ from \cite{lr} and the general machinery of \cite{elq}, which we review in \S\ref{prelims}. In \S\ref{introKMS}, we use the results of \cite{lr} to analyse the KMS states on our two boundary quotients.

In \S\ref{main result sec}--\ref{secCompatibility}, we relate our four algebras o Larsen's theory of Exel crossed products \cite{l}. She considered dynamical systems $(A,P,\alpha,L)$ in which $\alpha$ is an action of a semigroup $P$ by endomorphisms of a $C^*$-algebra $A$, and $L$ is an action of $P$ by transfer operators. Following earlier work on the case $P=\N$ in \cite{e1} and \cite{br}, Larsen constructed a product system $M_L$ of Hilbert bimodules over $P$, and then her crossed product is the Cuntz-Pimsner algebra $\OO(M_L)$, as defined by Fowler  in \cite{f}. The motivating examples in \cite{l} involve a compact abelian group $\Gamma$ and the action $\alpha:\nx\to\End C(\Gamma)$ defined by $\alpha_a(f)(g)=f(g^a)$. However, not much is known about these crossed products, and Larsen asked in particular whether her crossed product $C(\T)\rtimes_{\alpha, L}\nx$ can be described in familiar terms. 

We show that $C(\T)\rtimes_{\alpha, L}\nx$ is isomorphic to Cuntz's $\qn$, and that the additive boundary quotient is another important $C^*$-algebra associated to the product system $M_L$, namely the Nica-Toeplitz algebra $\NT(M_L)$ (Theorem~\ref{MLthm}). The algebra $\NT(M_L)$ is larger than the Cuntz-Pimsner algebra, and is universal for representations of the product system which are Nica covariant in a sense made precise by Fowler \cite{f} (who wrote $\TT_{\cov}$ rather than $\NT$ --- we explain in Remark~\ref{pontificate} why we think the notation needs to be changed). 
 
To fit our other two algebras into the setup of \cite{l}, we construct an Exel system $(\TT,\nx,\beta,K)$ based on the usual Toeplitz algebra $\TT=\TT(\N)$. We show that the associated Nica-Toeplitz algebra $\NT(M_K)$ is the Toeplitz algebra $\TT(\nxnx)$, and that the crossed product $\TT\rtimes_{\beta,K}\nx:=\OO(M_K)$ is our multiplicative boundary quotient (Theorem~\ref{MKthm}). We finish by showing that all the isomorphisms we have found are compatible, and fit together nicely in a large commutative diagram (Theorem~\ref{sumup}). Our results and those of \cite{ln} suggest that studying the KMS states on the $C^*$-algebras of other product systems might be very interesting indeed.

\subsection*{Notation} As in \cite{lr}, $\N$ denotes the additive semigroup of nonnegative integers, and $\nx$ the multiplicative semigroup of positive integers. We write $\PP$ for the set of prime numbers, and $e_p(a)$ for the exponent of $p$ in the prime factorisation of $a\in \nx$, so that $a=\prod_{p\in\PP}p^{e_p(a)}$, and $\NN$ for the set $\prod_{p\in\PP}p^{\N\cup\{\infty\}}$ of supernatural numbers. We also write $\Q$ for the additive group of rational numbers, and $\Q_+^*$ for the multiplicative group $\Q\cap (0,\infty)$.

For $M,N\in\sn$, we say that $M$ divides $N$ (written $M\mid N$) if $e_p(M)\leq e_p(N)$ for all $p$, and then each pair $M,N\in\NN$ has a least upper bound $\lcm(M,N)$ and greatest lower bound $\gcd(M,N)$ in $\NN$. As in \cite{lr}, we define
$\Z/N := \varprojlim \big((\Z/a\Z):a\in\nx,\;a\mid N\big)$, which is consistent with the notation $\Z/N$ for $\Z/N\Z$. Then  $\Z/p^\infty$ is the ring $\Z_p$ of $p$-adic integers, and if we write $\nabla:=\prod_{p\in\PP}p^\infty$, then $\Z/\nabla$ is  the ring $\widehat\Z$ of integral ad\`eles. If $M,N\in\NN$ and $M\mid N$, we write $r(M)$ for the image of $r\in \Z/N$ in~$\Z/M$.

\section{Preliminaries}\label{prelims}

\subsection{Quasi-lattice ordered groups and their Toeplitz algebras}\label{sec: Toeplitz algebras}

Let $G$ be a discrete group and $P$ a subsemigroup of $G$ such that $P\cap P^{-1}=\{e\}$, and consider the partial order on $G$ defined by $x\le y\Longleftrightarrow x^{-1}y\in P$. Following Nica \cite{n}, we say that $(G,P)$ is a {\em quasi-lattice ordered group} if any $x,y\in G$ which have a common upper bound in $P$ have a least upper bound $x\vee y\in P$. An isometric representation $V:P\to B(\HH)$ is then {\em Nica covariant} if
\begin{equation}\label{eq-nica}
V_xV_x^*V_yV_y^*=
\begin{cases}
V_{x\vee y}V_{x\vee y}^* & \text{if $x\vee y<\infty$}\\
0 & \text{if $x\vee y=\infty$,}
\end{cases}
\end{equation}
and the $C^*$-algebra $C^*(G,P)$  of $(G,P)$ is generated by a universal Nica-covariant representation $i_P:P\to C^*(G,P)$; we write $\pi_V$ for the representation of $C^*(G,P)$ such that $V=\pi_V\circ i_P$.
 
Every cancellative semigroup $P$ has an isometric representation on $l^2(P)$ characterised in terms of the usual basis $\{e_x:x\in P\}$ by $T_ye_x=e_{yx}$; we call this the {\em Toeplitz representation} of $P$. The {\em Toeplitz algebra} $\TT(P)$ is the $C^*$-subalgebra of $B(l^2(P))$ generated by the isometries $\{T_y\}$. Nica observed that, when $(G,P)$ is quasi-lattice ordered, the Toeplitz representation $T$ satisfies~\eqref{eq-nica}, and identified an amenability condition under which the corresponding representation $\pi_T$ of $C^*(G,P)$ is an isomorphism onto the Toeplitz algebra $\TT(P)$ \cite[\S4.2]{n}. Nica's amenability hypothesis is automatic if $G$ is an amenable group \cite[\S1.1]{n}.

Nica's algebra $C^*(G,P)$ was studied in \cite{lr1} by viewing it as a semigroup crossed product. For $x\in P$, let $1_x$ denote the characteristic function of the set $xP$. Then the quasi-lattice property implies that $\newspan\{1_x:x\in P\}$ is closed under multiplication, and hence  $B_P:=\clsp\{1_x:x\in P\}$ is a $C^*$-subalgebra of $\l^\infty(P)$. The action $\tau$ of $P$ by translation on $l^\infty(P)$ leaves $B_P$ invariant, and there is an isomorphism of the semigroup crossed product $B_P\times_\tau P$ onto $C^*(G,P)$ which identifies the copies of $P$ and carries $1_x$ to $i_P(x)i_P(x)^*$  \cite[Corollary~2.4]{lr1}. (We mention the algebra $B_P\times_\tau P$  here because we want to use it as motivation in the next subsection.)

\subsection{Partial crossed products and the Nica spectrum}\label{sec: more gen stuff} 

A {\em partial action} $\theta$ of a group $G$ on a compact space $X$ consists of open sets $\{U_t:t\in G\}$ and homeomorphisms $\theta_t:U_{t^{-1}}\to U_t$ such that $\theta_{st}$ extends $\theta_s\theta_t$ for $s,t\in G$. Each $\theta_t$ induces an isomorphism $\alpha_{t^{-1}}:f\mapsto f\circ\theta_t$ of the ideal $C_0(U_{t})$ in $C(X)$ onto $C_0(U_{t^{-1}})$, and the $\alpha_t$ form a partial action of $G$ on $C(X)$ as in \cite{elq}. The system $(C(X),G,\alpha)$ has a partial crossed product $C(X)\rtimes_{\alpha}G$ which is generated by a universal covariant representation $(\rho,u)$. There is also a reduced partial crossed product $C(X)\rtimes_{\alpha,r}G$, but when the partial action $\alpha$ is amenable (which is automatic if  $G$ is amenable), this reduced crossed product coincides with the full one \cite[Proposition~4.2]{e2}. Thus when $G$ is amenable, as ours will be, we can apply results in \cite{elq} about reduced crossed products to full crossed products. 

Suppose that $(G,P)$ is a quasi-lattice ordered group. Following \cite{n} and \cite[\S6]{elq}, we consider the {\em Nica spectrum} $\Omega$ of $P$, which is the set of nonempty directed hereditary subsets $\omega$ of $P$, viewed as a subset of the compact space $\{0,1\}^P$. As in \cite[\S2]{lac}, for $g\in G$ and $\omega\in \Omega$, we set $g\omega:=\{gy:y\in \omega\}$, and define $\theta_g(\omega)$ to be the hereditary closure $\her((g\omega)\cap P)$. The partially defined maps $\theta_g$ form a partial action of $G$ on $\Omega$; the domain of $\theta_g$ is $U_{g^{-1}}=\{\omega:(g\omega)\cap P\not=\emptyset\}$, which is nonempty if and only if $g\in PP^{-1}$. Lifting this partial action to $C(\Omega)$ gives a partial dynamical system $(C(\Omega),G,\alpha)$, and it was shown in \cite[Theorem~6.4]{elq}  that $C^*(G,P)$ is isomorphic to the partial crossed product $C(\Omega)\rtimes_{\alpha}G$. We need to understand how this isomorphism works.  

The Nica spectrum $\Omega$ enters into the picture because it is the spectrum of the commutative $C^*$-algebra $B_P$ appearing in \cite{lr1}: the functional $\hat\omega$ corresponding to $\omega\in\Omega$ is defined by $\hat\omega(f)=\lim_{x\in\omega}f(x)$, which makes sense because $\omega$ is directed. The Gelfand transform carries the generating function $1_x\in B_P$ into the characteristic function of the set $\{\omega\in \Omega:x\in \omega\}$, which is the domain $U_{x}$ of $\theta_{x^{-1}}$. The isomorphism of \cite[Theorem~6.4]{elq} carries the generating isometries $i_P(x)$ into the generators $u_x$, and the functions $1_x\in B_P\subset B_P\times_\tau P=C^*(G,P)$ into $\rho(\chi_{U_{x}})$. From now on, we write $1_x$ for $\chi_{U_{x}}$. 

The {\em spectrum} $\Omega_{\RR}$ of a subset $\RR$ of $C(\Omega)$ is
\begin{equation}\label{the Omega with relations}
\Omega_{\RR}:=\{\omega\in\Omega:\theta_{t^{-1}}(\omega)^\wedge(f)=f(\theta_{t^{-1}}(\omega))=0\text{ for all $t\in\omega,f\in\RR$}\}.
\end{equation}
Proposition~4.1 of \cite{elq}  says that  $\Omega_{\RR}$ is a closed invariant subset of $\Omega$, and \cite[Theorem~4.4]{elq} says that the partial crossed product $C(\Omega_{\RR})\rtimes G$ is the quotient of $C(\Omega)\rtimes_{\alpha} G$ obtained by imposing the relations $\{f=0:f\in\RR\}$. The boundary $\partial\Omega$ is the spectrum of a maximal set of relations for which the quotient is nontrivial; $\partial\Omega$ is the closure in $\Omega$ of the set of maximal hereditary directed subsets. (See \cite[Definition~3.3]{lac} or \cite[Lemma~3.5]{cl} for more detail, including a description of the set $\RR$ for which $\partial\Omega=\Omega_{\RR}$.) The boundary $\partial\Omega$ gives rise to the Crisp-Laca quotient $C(\partial\Omega)\rtimes G$ \cite[Theorem~6.3]{cl}.

\subsection{The Toeplitz algebra of $(\qxqx,\nxnx)$ }\label{sec: the lr paper}

Consider the semidirect product $\Q\rtimes\Q_+^*$, where
\begin{align*}
(r,a)(q,b)&= (r+aq, ab) \qquad\text{ for } r,q \in \Q \text{ and } a,b \in \qx,\ \text{ and }\\
(r,a)\inv&= (-a\inv r , a\inv) \qquad \text{ for } r \in \Q \text{ and } a \in \qx.
\end{align*}
Laca and Raeburn proved that $(\qxqx,\nxnx)$ is quasi-lattice ordered  \cite[Proposition~2.1]{lr}. Since $\qxqx$ is amenable, Nica's theory implies that the Toeplitz representation $T$ gives an isomorphism $\pi_T$ of $C^*(\qxqx,\nxnx)$ onto the Toeplitz algebra $\TT(\nxnx)$, and hence $(\TT(\nxnx),T)$ is universal for Nica-covariant representations of $\nxnx$. In \cite[Theorem~4.1]{lr}, this universal property is used to present $\TT(\nxnx)$ as the universal $C^*$-algebra generated by isometries $s$ and $\{v_p:p\in\PP\}$ satisfying
\begin{itemize}
\item[(T1)] $v_ps=s^pv_p$,
\item[(T2)] $v_pv_q=v_qv_p$,
\item[(T3)] $v_p^*v_q=v_qv_p^*$ for $p\neq q$,
\item[(T4)] $s^*v_p=s^{p-1}v_ps^*$, and
\item[(T5)] $v_p^*s^kv_p=0$ for $1\le k< p$.
\end{itemize}
Thus if $S$ and $\{V_p:p\in\PP\}$ are isometries  satisfying (T1)--(T5),  there is a homomorphism $\pi_{S,V}$ on  $\TT(\nxnx)$ such that $\pi_{S,V}(s)=S$ and $\pi_{S,V}(v_p)=V_p$. The relations (T1) and (T2) imply that there is an isometric representation $w$ of $\nxnx$ satisfying $w_{(m,a)}=s^m\prod_{p\in\PP} v_p^{e_p(a)}$, and then (T3)--(T5) imply that $w$ is Nica covariant (see \cite[\S4]{lr}).

Applying Theorem~3.7 of \cite{lr1} to $(\qxqx,\nxnx)$ gives conditions on a Nica-covariant representation $W$ of $\nxnx$ which ensure that $\pi_W$ is faithful. Since the semigroup $\nxnx$ has many minimal elements, the hypotheses of \cite[Theorem~3.7]{lr1} simplify as follows: 

\begin{thm}\label{uniqueness for T(nxnx)}
Suppose $S$ and $\{V_p:p\in\PP\}$ are isometries satisfying \textup{(T1)--(T5)}. Then the corresponding representation $\pi_{S,V}$ of $\TT(\nxnx)$ is faithful if and only if 
\begin{equation}\label{eq for lr1 cor}
(1-SS^*)\prod_{p\in F}\prod_{k=0}^{p-1}(1-S^kV_pV_p^*S^{*k})\not=0\quad\text{for every finite set $F$ of primes.}
\end{equation}
\end{thm}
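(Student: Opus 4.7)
The plan is to apply the Laca--Raeburn uniqueness theorem \cite[Theorem~3.7]{lr1} for quasi-lattice ordered groups to the Nica-covariant representation $W$ of $\nxnx$ determined by $(S,\{V_p\})$. Relations (T1) and (T2) make $W_{(m,a)}:=S^m\prod_{p\in\PP}V_p^{e_p(a)}$ well-defined, and (T3)--(T5) encode Nica covariance, as already recalled in \S\ref{sec: the lr paper}. Under the isomorphism $\TT(\nxnx)\cong C^*(\qxqx,\nxnx)$, the representation $\pi_W$ is identified with $\pi_{S,V}$, so the question reduces to characterising faithfulness of $\pi_W$.

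The general hypothesis of \cite[Theorem~3.7]{lr1} asks for non-vanishing of products of the form $\prod_{y\in F}(1-W_yW_y^*)$ over certain finite mutually-incompatible families $F$ of elements immediately above the identity in $P$. The simplification for $(\qxqx,\nxnx)$ comes from pinning down such a canonical family: the minimal nonidentity elements of $\nxnx$ are $(1,1)$ and $(0,p)$ for primes $p$; however, to obtain a \emph{pairwise} incompatible test family one refines each $(0,p)$ to the $p$ elements $(k,p)$ with $0\le k<p$. For distinct $k,k'$ in this range, any common upper bound $(m,a)\in\nxnx$ of $(k,p)$ and $(k',p)$ would satisfy $m\equiv k\equiv k'\pmod p$, forcing $(k,p)\vee(k',p)=\infty$; hence the range projections $S^kV_pV_p^*S^{*k}$ are pairwise orthogonal. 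Feeding the family $\{(1,1)\}\cup\{(k,p):p\in F,\,0\le k<p\}$ into \cite[Theorem~3.7]{lr1} produces exactly the product displayed in~\eqref{eq for lr1 cor}.

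For the necessity direction one must check that this product is nonzero in $\TT(\nxnx)$ itself: in the Toeplitz representation on $l^2(\nxnx)$ it fixes the basis vector $e_{(0,b)}$ whenever $b\in\nx$ is coprime to every prime in $F$, because $(0,b)$ lies above none of $(1,1)$ or $(k,p)$ for $p\in F$ (for $k\ge 1$ since $0\not\equiv k\pmod p$, and for $k=0$ since $p\nmid b$). The main obstacle will be verifying that the canonical incompatible families $\{(1,1)\}\cup\{(k,p):p\in F,\,0\le k<p\}$ are universal test families for the hypothesis of \cite[Theorem~3.7]{lr1} in $(\qxqx,\nxnx)$---i.e.\ that every other candidate test family can be dominated, via the Nica-covariance order on projections, by one of this canonical form, so that checking~\eqref{eq for lr1 cor} suffices for faithfulness. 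Once this reduction is in place the remainder of the argument is bookkeeping.
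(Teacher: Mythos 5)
Your setup is the same as the paper's: pass to the Nica-covariant representation $W_{(m,a)}=S^m\prod_p V_p^{e_p(a)}$ and invoke \cite[Theorem~3.7]{lr1}. The necessity direction you sketch (the product in \eqref{eq for lr1 cor} fixes $e_{(0,b)}$ in the Toeplitz representation, hence is nonzero in $\TT(\nxnx)$, so faithfulness forces \eqref{eq for lr1 cor}) is fine, and in fact already follows from the ``only if'' half of \cite[Theorem~3.7]{lr1} because each set $\{(1,1)\}\cup\{(k,p):p\in F,\ 0\le k<p\}$ is a particular finite subset of $\nxnx\setminus\{(0,1)\}$.

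The problem is the sufficiency direction, which is the actual content of the theorem and which you explicitly defer as ``the main obstacle.'' The hypothesis of \cite[Theorem~3.7]{lr1} is that $\prod_{(m,a)\in E}(1-W_{(m,a)}W_{(m,a)}^*)\neq 0$ for \emph{every} finite subset $E$ of $\nxnx\setminus\{(0,1)\}$, not merely for mutually incompatible families of elements covering the identity; so you must show that \eqref{eq for lr1 cor} controls all such $E$, and this reduction is nowhere carried out in your proposal. (Your framing is also slightly off: the family $\{(1,1)\}\cup\{(k,p):0\le k<p\}$ is not pairwise incompatible, since $(1,1)\le (k,p)$ for $1\le k<p$.) The missing argument is short but essential: given $E$, one may enlarge $E$ (which only shrinks the commuting product of the projections $1-W_yW_y^*$) so that $(1,1)\in E$; for each $(m,a)\in E$ with $a>1$ one chooses a prime $p\mid a$ and $n\equiv m\pmod p$ with $0\le n<p$, so that $(n,p)\le (m,a)$, hence $W_{(m,a)}W_{(m,a)}^*\le W_{(n,p)}W_{(n,p)}^*$ and $1-W_{(m,a)}W_{(m,a)}^*\ge 1-S^nV_pV_p^*S^{*n}$; and for $a=1$ one has $1-W_{(m,1)}W_{(m,1)}^*\ge 1-SS^*$. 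Multiplying in the remaining factors $1-S^kV_pV_p^*S^{*k}$ for the other residues $k$ again only decreases the product, so $\prod_{(m,a)\in E}(1-W_{(m,a)}W_{(m,a)}^*)$ dominates the element in \eqref{eq for lr1 cor} with $F=\{p_{(m,a)}\}$ and is therefore nonzero. Until you supply this domination step, your proposal is a plan rather than a proof.
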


\begin{proof}
Let  $W:(m,a)\mapsto S^m\prod_{p\in\PP}V_p^{e_p(a)}$  be the associated Nica-covariant representation of $\nxnx$. We fix a finite subset  $E$  of $\nxnx\setminus\{(0,1)\}$, and aim to show that
\[
\prod_{(m,a)\in E}(1-W_{(m,a)}W_{(m,a)}^*)\not=0,
\]
which is the hypothesis of \cite[Theorem~3.7]{lr1}.

If we make $E$ larger, then we make the product smaller, so we may as well assume that $(1,1)\in E$ and that $E$ has at least one element $(m,a)$ with $a>1$. Then for every $m$ we have
\begin{equation}\label{eq for a is 1}
1-W_{(m,1)}W_{(m,1)}^*=1-S^m{S^*}^m\ge 1-SS^*.
\end{equation}
For each $(m,a)\in E$ with $a>1$, we choose a prime $p=p_{(m,a)}$ in the prime factorisation of $a$ and  $n=n_{(m,a)}$ between $0$ and $p-1$ such that $n\equiv m\pmod p$. Then $(n,p)\leq (m,a)$,  so $W_{(m,a)}W_{(m,a)}^*\leq W_{(n,p)}W_{(n,p)}^*$ and 
\begin{equation}\label{2ndeq}
1-W_{(m,a)}W_{(m,a)}^*\ge 1-W_{(n,p)}W_{(n,p)}^*=1-S^{n}V_{p}{(S^{n}V_{p})}^*.
\end{equation}
Equations \eqref{eq for a is 1} and \eqref{2ndeq} imply
\begin{align*}
\prod_{(m,a)\in E}(1-W_{(m,a)}W_{(m,a)}^*)&=\prod_{(m,1)\in E}(1-W_{(m,1)}W_{(m,1)}^*)\prod_{{(m,a)\in E,\;a>1}}(1-W_{(m,a)}W_{(m,a)}^*)\\
&\geq(1-SS^*)\prod_{{(m,a)\in E,\;a>1}}(1-S^{n_{(m,a)}}V_{p_{(m,a)}}V_{p_{(m,a)}}^*{S^*}^{n_{(m,a)}})\\
&\geq(1-SS^*)\prod_{{(m,a)\in E,\;a>1}}\prod_{k=0}^{p_{(m,a)}-1}(1-S^{k}V_{p_{(m,a)}}V_{p_{(m,a)}}^*{S^{*k}}),
\end{align*}
which is nonzero by hypothesis with $F=\{p_{(m,a)}:(m,a)\in E\}$. The result now follows from \cite[Theorem~3.7]{lr1}.
\end{proof} 

\subsection{The Nica spectrum of $\nxnx$}

As in \cite[\S5]{lr}, for  a supernatural number $N\in\NN$, $m\in\N$ and $r\in\Z/N$ we define 
\begin{align*}
A(m,N)&:=\{(k,a)\in\nxnx:a \mid N\text{ and }a^{-1}(m-k)\in\N\},\text{ and}\\
B(r,N)&:=\{(k,a)\in\nxnx:a\mid N\text{ and }k\in r(a)\}.
\end{align*}
These  are hereditary, directed subsets of $\nxnx$, and 
Corollary~5.6 of \cite{lr} says that the Nica spectrum $\Omega$ of $\nxnx$ is 
\[
\Omega=\{A(m,M):M\in\NN,m\in\N\}\cup\{B(r,N):N\in\NN,r\in\Z/N\}.
\]
From \cite[Theorem~6.4]{elq} and \cite[\S4.2]{n}
we obtain an isomorphism  of  $C(\Omega)\rtimes(\qxqx)$ onto $\TT(\nxnx)$ which maps $u_{(1,1)}$ to $s$,  $u_{(0,p)}$ to $v_p$ for all $p\in\PP$, $\rho(1_{(1,1)})$ to $ss^*$ and $\rho(1_{(0,p)})$ to $v_pv_p^*$. 
By \cite[Proposition~6.1]{lr}, Cuntz's $\QQ_{\N}$ is the universal $C^*$-algebra generated by isometries satisfying (T1), (T2) and the relations
\begin{itemize}
\item[(Q5)] $\sum_{k=0}^{p-1}s^kv_p(s^kv_p)^*=1$ for every $p\in\PP$, and
\item[(Q6)] $ss^*=1$.
\end{itemize}
Note that (T1), (T2), (Q5) and (Q6) imply (T3) and (T4), so $\QQ_{\N}$ can be viewed as a quotient of $\TT(\nxnx)$.

In this paper we  investigate the  {\em additive} and {\em multiplicative boundaries}
\begin{align*}
\Omega_{\add}&:=\{B(r,N):N\in\NN,r\in\Z/N\}\quad\text{and}\\
\Omega_{\mult}&:=\{A(m,\nabla):m\in\N\}\cup\{B(r,\nabla):r\in\widehat{\Z}\}.
\end{align*}
We reach the additive boundary by letting the $m$ in a pair $(m,a)$  go to infinity along an arithmetic progression, and the multiplicative boundary by letting $a$  go to infinity  in the semigroup $\nx$ directed by $a\leq b\Longleftrightarrow a\mid b$. (The set $\Omega_{\mult}$ is not quite the same as the set described as the multiplicative boundary in \cite[Remark~5.9]{lr}, which contains also the $B(r,N)$ associated to $N\in \NN\setminus\N$.)

\section{The additive and multiplicative boundary quotients}\label{the abq sec}

We will  show in Lemma~\ref{omega_add} that both $\Omega_\add$ and $\Omega_\mult$ are the spectra  of subsets of $C(\Omega)$. It then follows from  \cite[Proposition~4.1]{elq} that they are closed invariant subsets  of $\Omega$, and from \cite[Theorem~4.4]{elq} that $C(\Omega_\add)\rtimes (\qxqx)$ and $C(\Omega_\mult)\rtimes (\qxqx)$ are quotients of $C(\Omega)\rtimes(\qxqx)$. We define the  additive and multiplicative boundary quotients $\TT_\add(\nxnx)$ and $\TT_\mult(\nxnx)$ to be the corresponding quotients of $\TT(\nxnx)$ (see Proposition~\ref{anotherquotient}).

\begin{lemma}\label{omega_add} Let $\RR_\add=\{1-1_{(1,1)}\}$ and  
\[\RR_\mult=\Big\{1-\sum_{k=0}^{p-1}1_{(k,p)}:p\in\PP\Big\}.
\]
Then $\Omega_{\add}=\Omega_{\RR_\add}$ and $\Omega_{\mult}=\Omega_{\RR_\mult}$.
\end{lemma}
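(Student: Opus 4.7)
The plan is to exploit a concrete description of $\theta_{t^{-1}}(\omega)$ for $t\in\omega$. The definition $\theta_{t^{-1}}(\omega)=\her((t^{-1}\omega)\cap\nxnx)$ together with hereditariness of $\omega$ yields
\[
x\in\theta_{t^{-1}}(\omega)\iff tx\in\omega\qquad(x\in\nxnx),
\]
which reduces the vanishing condition in \eqref{the Omega with relations} to a purely combinatorial condition on the elements of $\omega$. Throughout, I will use Corollary~5.6 of \cite{lr} to know that every $\omega\in\Omega$ is of the form $A(m,M)$ or $B(r,N)$.

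For $\RR_\add$, this translation shows $\omega\in\Omega_{\RR_\add}$ iff $t\cdot(1,1)\in\omega$ for every $t\in\omega$; writing $t=(m,a)$ the condition becomes the implication $(m,a)\in\omega\Rightarrow(m+a,a)\in\omega$. I will then test the two families that make up $\Omega$. On $A(m_0,M_0)$ the element $t=(m_0,1)$ lies in the set but $(m_0+1,1)$ does not, ruling these out. On $B(r,N)$, any $(m,a)\in\omega$ satisfies $m\equiv r\pmod a$, hence also $m+a\equiv r\pmod a$, so $(m+a,a)\in B(r,N)$. This gives $\Omega_{\RR_\add}=\Omega_\add$.

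For $\RR_\mult$, I first observe that distinct $(k_1,p),(k_2,p)$ with $0\le k_1,k_2<p$ have no common upper bound in $\nxnx$, because any such bound would force $p\mid k_1-k_2$. Hence a directed subset of $\nxnx$ contains at most one element of the form $(k,p)$ with $0\le k<p$, and the vanishing of $1-\sum_{k=0}^{p-1}1_{(k,p)}$ at $\theta_{t^{-1}}(\omega)$ is equivalent to the existence of some such $(k,p)$ there, i.e., to $(m+ak,ap)\in\omega$ for some $k\in\{0,\ldots,p-1\}$ whenever $t=(m,a)\in\omega$. Applied to $t=(m_0,1)\in A(m_0,M_0)$ this forces $p\mid M_0$ for every prime $p$, so $M_0=\nabla$; applied to $t=(0,1)\in B(r,N)$ it forces $N=\nabla$. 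Conversely, for $A(m_0,\nabla)$ and $B(r,\nabla)$ I will produce the required $k$ by picking the unique residue mod $p$ prescribed by a congruence extracted from the membership condition, using $ap\mid\nabla$ to make the divisibility automatic.

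The main obstacle I foresee is the converse direction for $A(m_0,\nabla)$: one must check that the candidate $k:=q\bmod p$ (where $q\in\N$ is defined by $m_0-m=aq$) satisfies not only the congruence $p\mid q-k$ but also the inequality $k\le q$ needed for $m+ak\le m_0$. Since $k\le q$ is automatic for $q\in\N$, this is really a bookkeeping check rather than a genuine difficulty, and the rest of the argument is straightforward.
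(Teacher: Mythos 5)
Your overall strategy is the same as the paper's: translate the vanishing condition in \eqref{the Omega with relations} into the combinatorial statement that $t\in\omega$ forces $t(1,1)\in\omega$ (additive case) or $t(k,p)\in\omega$ for some $0\le k<p$ (multiplicative case), and then test the two families $A(m,M)$ and $B(r,N)$ from \cite[Corollary~5.6]{lr}. The additive half and the ``if'' half of the multiplicative case are correct, including your observation that distinct $(k_1,p)$ and $(k_2,p)$ have no common upper bound (this is the paper's remark that the $1_{(k,p)}$ are mutually orthogonal projections) and the check that $k\le q$ is automatic.

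There is, however, a genuine gap in the ``only if'' direction of the multiplicative case. Testing the condition only at $t=(m_0,1)$ (resp.\ $t=(0,1)$) yields $p\mid M_0$ for every prime $p$, i.e.\ $e_p(M_0)\ge 1$ for all $p$; this does \emph{not} imply $M_0=\nabla$, which requires $e_p(M_0)=\infty$ for all $p$. The supernatural number $N=\prod_{p\in\PP}p$ is divisible by every prime but is not $\nabla$, so the inference ``this forces $p\mid M_0$ for every prime $p$, so $M_0=\nabla$'' fails as written. You must test at elements with nontrivial multiplicative part. Either iterate: the first application puts some $(k,p)$ into $\omega$, applying the condition again at $t=(k,p)$ forces $p^2\mid N$, and induction gives $p^n\mid N$ for all $n$. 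Or argue directly, as the paper does: if $N\ne\nabla$, pick $p$ with $e_p(N)<\infty$ and set $a=p^{e_p(N)}$, so that $a\mid N$ and $ap\nmid N$; then $(m,a)\in A(m,N)$ (resp.\ some $(j,a)\in B(r,N)$ with $j\in r(a)$), but $(m+ak,ap)\notin\omega$ for every $k$ because $ap\nmid N$, and the covariance condition fails at that element.
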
 

\begin{proof}
According to the  definition of $\Omega_{\RR_\add}$ in \eqref{the Omega with relations}, for the first assertion it suffices to show that for each $\omega\in \Omega$, 
\begin{equation}\label{specB}
\theta_{(k,a)^{-1}}(\omega)^\wedge(1-1_{(1,1)})=0\ \text{ for all $(k,a)\in \omega$} \Longleftrightarrow \omega\in\Omega_{\add}.
\end{equation}
To prove \eqref{specB}, note that
\begin{align*}
\theta_{(k,a)^{-1}}(\omega)^\wedge(1-1_{(1,1)})=0&\Longleftrightarrow (1,1)\in \theta_{(k,a)^{-1}}(\omega)\\&\Longleftrightarrow (k,a)(1,1)=(k+a,a)\in \omega.
\end{align*}
Now suppose $\omega=B(r,N)$ is in $\Omega_\add$. Then 
\begin{align*}
(k,a)\in B(r,N)\Longleftrightarrow (k+a,a)\in B(r,N)\Longrightarrow \theta_{(k,a)^{-1}}(\omega)^\wedge(1-1_{(1,1)})=0,
\end{align*}
so the left-hand side of \eqref{specB} holds.  Conversely, suppose that $\omega\notin\Omega_\add$.  Then $\omega=A(l,N)$ for some $N\in\NN$ and  $l\in\N$.  Since $(l,1)\in A(l,N)$ but $(l+1,1)=(l,1)(1,1)\notin A(l,N)$, the left-hand side of \eqref{specB} fails. This proves \eqref{specB}, and $\Omega_{\add}=\Omega_{\RR_\add}$.

To see that $\Omega_{\mult}=\Omega_{\RR_\mult}$,  fix $p\in\PP$.  It suffices to show that for  $\omega\in\Omega$, we have
$\omega\in\Omega_{\mult}$ if and only if 
\begin{equation}\label{key iff}
\theta_{{(j,a)}^{-1}}(\omega)^{\wedge}\Big(1-\sum_{k=0}^{p-1}1_{(k,p)}\Big)=0\text{ for all }(j,a)\in\omega.
\end{equation}
Since the $1_{(k,p)}$ are mutually orthogonal projections, $\theta_{{(j,a)}^{-1}}(\omega)^\wedge(1_{(k,p)})=1$ for at most one $k$.
Suppose that $\omega=A(m,\nabla)$.  Then 
\begin{align*}
(j,a)\in\omega&\Longrightarrow a^{-1}(m-j)\in\N\\
&\Longrightarrow \text{ there exists } k\in\{0,\dots,p-1\}\text{ such that } p^{-1}(a^{-1}(m-j)-k)\in\N\\
&\Longrightarrow\theta_{(j,a)}(k,p)=(j+ak, ap)\in A(m,\nabla)\\
&\Longrightarrow \theta_{{(j,a)}^{-1}}( A(m,\nabla))^{\wedge}(1_{(k,p)})=1,
\end{align*}
so $\omega=A(m,\nabla)$ satisfies \eqref{key iff}.  Now suppose $\omega=B(r,\nabla)$.  Then 
\begin{align*}
(j,a)\in\omega&\Longrightarrow j\in r(a)\\
&\Longrightarrow \text{there exists } k\in\{0,\dots,p-1\}\text{ such that } j+ak\in r(ap)\\
&\Longrightarrow \theta_{(j,a)}(k,p)=(j+ak, ap)\in B(r,\nabla),
\end{align*}
so $\omega=B(r,\nabla)$ satisfies \eqref{key iff}. This proves the ``only if'' part.  If $w\notin \Omega_\mult$, then $\omega=A(m,N)$ or $B(r, N)$ where $N\not=\nabla$.  For $\omega= A(m, N)$  we can choose $a$ and $p$ such that $a\mid  N$ and $ap\notdiv N$.  Then $(m,a)\in\omega$ and $\theta_{(m,a)}(k,p)\not\in \omega$ for all $k$; so 
$\theta_{(m,a)^{-1}}(\omega)^\wedge(1_{(k,p)})=0$ for all $k$ and hence the left-hand side of \eqref{key iff} equals $1$.  For $\omega=B(r, N)$, choose $a$ and $p$ as above and $j\in r(a)$, and then  the left-hand side of \eqref{key iff} is again $1$. This proves the ``if'' part. Thus $\Omega_{\mult}=\Omega_{\RR_\mult}$.
\end{proof}

The following definition is justified by Proposition~\ref{anotherquotient} below.

\begin{definition} Let $I$ be the ideal of $\TT(\nxnx)$ generated by the element $1 - ss^*$, and let $J$ be the ideal of $\TT(\nxnx)$ generated by
\[
\Big\{1-\sum_{k=0}^{p-1}s^kv_pv_p^*{s^*}^k:p\in\PP\Big\}.
\]
The \emph{additive boundary quotient} is  $\TT_{\add}(\nxnx):=\TT(\nxnx)/I$  and the \emph{multiplicative boundary quotient} is $\TT_{\mult}(\nxnx):=\TT(\nxnx)/J$.  
\end{definition}

 We immediately have:

\begin{prop}\label{prop-present-add}
The additive boundary quotient $\TT_{\add}(\nxnx)$  is the universal $C^*$-algebra with presentation \textnormal{(T1)--(T3), (T5)} and \textnormal{(Q6)}, and  $\TT_{\mult}(\nxnx)$ is the universal $C^*$-algebra with presentation \textup{(T1)--(T4)} and \textup{(Q5)}.
\end{prop}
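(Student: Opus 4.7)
The plan is to use the defining quotient descriptions of $\TT_{\add}(\nxnx)$ and $\TT_{\mult}(\nxnx)$ together with the presentation of $\TT(\nxnx)$ by (T1)--(T5) recalled in \S\ref{sec: the lr paper}. Since $\TT_{\add}(\nxnx) = \TT(\nxnx)/I$ with $I$ generated by $1-ss^*$, the quotient is automatically the universal $C^*$-algebra generated by isometries satisfying (T1)--(T5) together with (Q6); similarly $\TT_{\mult}(\nxnx) = \TT(\nxnx)/J$ is universal for isometries satisfying (T1)--(T5) together with (Q5). To recover the stated presentations it then suffices to show that (T4) is redundant in the first list and that (T5) is redundant in the second.

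For the additive quotient, I would start from (T1), $v_p s = s^p v_p$. Multiplying on the left by $s^*$ and using $s^*s = 1$ gives $s^* v_p s = s^{p-1} v_p$. Multiplying on the right by $s^*$ and invoking (Q6) then yields $s^* v_p = s^{p-1} v_p s^*$, which is (T4). Hence (T4) follows from (T1) and (Q6), and the presentation reduces to (T1)--(T3), (T5), (Q6).

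For the multiplicative quotient, set $P_k := s^k v_p v_p^* s^{*k}$ for $0 \le k < p$. Because $s$ and $v_p$ are isometries each $P_k$ is a projection, and (Q5) asserts $\sum_{k=0}^{p-1} P_k = 1$. A standard argument then forces pairwise orthogonality: from $P_i = P_i \sum_k P_k$ one obtains $\sum_{k \ne i} P_i P_k = 0$, and multiplying on the right by $P_i$ produces a sum of positive elements $(P_k P_i)^*(P_k P_i) = P_i P_k P_i$ equal to zero, so $P_k P_i = 0$ for $k \ne i$. Taking $i = 0$ gives $v_p v_p^* s^k v_p v_p^* s^{*k} = 0$ for $1 \le k < p$; multiplying on the left by $v_p^*$ and on the right by $s^k v_p$ and using $v_p^* v_p = s^{*k} s^k = 1$ produces $v_p^* s^k v_p = 0$, which is (T5). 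Thus (T5) follows from (T1)--(T4) and (Q5), giving the claimed presentation of $\TT_{\mult}(\nxnx)$.

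Both reductions are routine algebraic manipulations with the universal relations, so I do not anticipate a substantial obstacle; the only step that requires a moment's care is the deduction of pairwise orthogonality of the $P_k$ from (Q5), which rests on the positivity identity $P_i P_k P_i = (P_k P_i)^*(P_k P_i) \ge 0$.
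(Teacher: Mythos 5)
Your proposal is correct and follows essentially the same route as the paper: both reduce the claim to showing that (T4) is redundant given (T1) and (Q6), and that (T5) is redundant given (Q5). The only difference is that the paper cites \cite[Proposition~6.1]{lr} for the implication (Q5)~$\Rightarrow$~(T5), whereas you supply the (correct) orthogonality argument explicitly.
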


\begin{proof}
By \cite[Theorem~4.1]{lr},  $\TT(\nxnx)$ is the universal $C^*$-algebra with presentation (T1)--(T5). Since (T1) and (Q6) together imply (T4),  $\TT_{\add}(\nxnx)$  is by definition  the universal $C^*$-algebra with presentation \textnormal{(T1)--(T3), (T5)} and \textnormal{(Q6)}. The presentation of $\TT_{\mult}(\nxnx)$ follows immediately from \cite[Theorem~4.1]{lr} since (Q5) implies (T5) by \cite[Proposition~6.1]{lr}.
\end{proof}

We next check that these quotients do match up with the quotients of $C(\Omega)\rtimes(\qxqx)$ we are interested in.  We denote by $(\rho^{\add},u^{\add})$ and $(\rho^{\mult},u^{\mult})$  the universal covariant representations that generate the partial crossed products $C(\Omega_{\add}) \rtimes (\qxqx)$ and $C(\Omega_{\mult}) \rtimes (\qxqx)$, respectively.

\begin{prop}\label{anotherquotient} 
There are  isomorphisms 
\begin{gather*}
C(\Omega_{\add}) \rtimes (\qxqx)\to \TT_{\add}(\nxnx)\text{\ and\ }
C(\Omega_{\mult}) \rtimes (\qxqx)\to   \TT_{\mult}(\nxnx)
\end{gather*}
such that $u^{\add}_{(1,1)}\mapsto s$, $u^{\add}_{(0,p)}\mapsto v_p$ and  $u^{\mult}_{(1,1)}\mapsto s$, $u^{\mult}_{(0,p)}\mapsto v_p$.
\end{prop}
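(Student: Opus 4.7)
The plan is to bootstrap from the already-established isomorphism
$\phi:C(\Omega)\rtimes(\qxqx)\to\TT(\nxnx)$ (from \cite[Theorem~6.4]{elq} and \cite[\S4.2]{n}) by passing to the quotients determined by the relations $\RR_\add$ and $\RR_\mult$ identified in Lemma~\ref{omega_add}.

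First I would record what $\phi$ does to the relevant generators of $C(\Omega)$. By construction $\phi(\rho(1_x))=i_P(x)i_P(x)^*$ for $x\in\nxnx$, and the factorisation $(k,p)=(k,1)(0,p)$ together with $\phi(u_{(1,1)})=s$, $\phi(u_{(0,p)})=v_p$ gives $i_P((k,p))=s^kv_p$. Hence
\[
\phi\bigl(\rho(1-1_{(1,1)})\bigr)=1-ss^*,\qquad \phi\Bigl(\rho\Bigl(1-\sum_{k=0}^{p-1}1_{(k,p)}\Bigr)\Bigr)=1-\sum_{k=0}^{p-1}s^kv_pv_p^*{s^*}^k.
\]

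Next, by Lemma~\ref{omega_add}, $\Omega_\add=\Omega_{\RR_\add}$ and $\Omega_\mult=\Omega_{\RR_\mult}$, so \cite[Theorem~4.4]{elq} tells us that $C(\Omega_\add)\rtimes(\qxqx)$ is the quotient of $C(\Omega)\rtimes(\qxqx)$ by the ideal generated by $\rho(1-1_{(1,1)})$, and $C(\Omega_\mult)\rtimes(\qxqx)$ is the quotient by the ideal generated by $\{\rho(1-\sum_{k=0}^{p-1}1_{(k,p)}):p\in\PP\}$. Pushing these ideals across $\phi$ gives precisely the ideals $I$ and $J$ of $\TT(\nxnx)$ defining $\TT_\add(\nxnx)$ and $\TT_\mult(\nxnx)$. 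Therefore $\phi$ descends to isomorphisms on the quotients in both cases, and the formulas for the images of the generators $u^\add_{(1,1)}$, $u^\add_{(0,p)}$, $u^\mult_{(1,1)}$, $u^\mult_{(0,p)}$ follow from the corresponding formulas for $\phi$.

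There is essentially no obstacle: the content is just unravelling how $\phi$ translates the spatial relations defining $\Omega_\add$ and $\Omega_\mult$ into the algebraic relations defining $I$ and $J$. The only small step to be careful about is the identification $\phi(\rho(1_{(k,p)}))=s^kv_pv_p^*{s^*}^k$, which uses Nica covariance of $i_P$ and the explicit factorisation $(k,p)=(k,1)(0,p)$ in $\nxnx$.
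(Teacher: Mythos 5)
Your proposal is correct and follows essentially the same route as the paper: both arguments push the known isomorphism $C(\Omega)\rtimes(\qxqx)\cong\TT(\nxnx)$ down to the quotients using Lemma~\ref{omega_add} and \cite[Theorem~4.4]{elq}, after identifying $\rho(1_{(1,1)})\mapsto ss^*$ and $\rho(1_{(k,p)})\mapsto s^kv_pv_p^*s^{*k}$. (Only a pedantic note: the identity $i_P((k,p))=s^kv_p$ needs just multiplicativity of $i_P$ and the factorisation $(k,p)=(k,1)(0,p)$, not Nica covariance.)
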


\begin{proof} The isomorphism of $C(\Omega)\rtimes(\qxqx)$ onto $\TT(\nxnx)$ sends $u_{(1,1)}$ to $s$,  $u_{(0,p)}$ to $v_p$ for all $p\in\PP$, $\rho(1_{(1,1)})$ to $ss^*$ and $\rho(1_{(0,p)})$ to $v_pv_p^*$. In particular,  $ss^*$ corresponds to the function $1_{(1,1)}$, so  the isomorphism of  $C(\Omega_{\add}) \rtimes (\qxqx)$ onto $\TT_{\add}(\nxnx)$  follows from Lemma~\ref{omega_add}. 

For the multiplicative boundary quotient,   note that for $p\in\PP$ and $0\le k< p$, $s^kv_pv_p^*{s^*}^k\in\TT(\nxnx)$ corresponds to the function $1_{(k,p)}$ in $C(\Omega)$. Now  proceed as for the additive quotient.
\end{proof}

The next result describes  the faithful representations of $\TT_{\add}(\nxnx)$. 

\begin{thm}\label{faithrepsomegaB}
Suppose that $S$ and $\{V_p:p\in\primes\}$ are isometries satisfying \textnormal{(T1)--(T3), (T5)} and \textnormal{(Q6)}. Then  the corresponding representation $\pi_{S,V}$ of $\TT_{\add}(\nxnx)$ is faithful if and only if
\begin{equation}\label{charfaithomegaB}
\prod_{p\in F}\prod_{l=0}^{p-1}(1-S^lV_pV_p^*S^{*l})\not= 0\ \text{ for every finite set $F\subset \primes$.}
\end{equation}
\end{thm}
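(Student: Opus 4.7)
The necessity direction is immediate. Under the isomorphism $\TT_\add(\nxnx)\cong C(\Omega_\add)\rtimes(\qxqx)$ of Proposition~\ref{anotherquotient}, the projection $\prod_{p\in F}\prod_{l=0}^{p-1}(1-s^lv_pv_p^*s^{*l})$ corresponds to the characteristic function of the open set $\{B(r,N)\in\Omega_\add: p\nmid N\text{ for all }p\in F\}$, which is nonempty (it contains $B(0,1)$, since $N=1$ has no prime factors). The projection is therefore nonzero in $\TT_\add(\nxnx)$, so its image under any faithful representation cannot vanish.

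For sufficiency, the plan is to adapt the argument of Theorem~\ref{uniqueness for T(nxnx)}. Assemble $S$ and $\{V_p\}$ into the Nica-covariant representation $W_{(m,a)}:=S^m\prod_p V_p^{e_p(a)}$ of $\nxnx$, and let $E\subset\nxnx\setminus\{(0,1)\}$ be finite. Because $(Q6)$ makes $S$ unitary, $W_{(m,1)}W_{(m,1)}^*=S^mS^{*m}=1$, so any $(m,1)\in E$ would kill the product $\prod_{(m,a)\in E}(1-W_{(m,a)}W_{(m,a)}^*)$; we may therefore assume every $(m,a)\in E$ has $a>1$. For each such pair, choose as in Theorem~\ref{uniqueness for T(nxnx)} a prime $p=p_{(m,a)}\mid a$ and $n=n_{(m,a)}\in\{0,\dots,p-1\}$ with $n\equiv m\pmod p$; combining the domination $1-W_{(m,a)}W_{(m,a)}^*\ge 1-S^nV_pV_p^*S^{*n}$ with the mutual orthogonality of the projections $\{S^kV_pV_p^*S^{*k}:0\le k<p\}$ (from (T5)) yields
\[
\prod_{(m,a)\in E}\bigl(1-W_{(m,a)}W_{(m,a)}^*\bigr)\ \ge\ \prod_{p\in F}\prod_{k=0}^{p-1}\bigl(1-S^kV_pV_p^*S^{*k}\bigr),
\]
where $F=\{p_{(m,a)}:(m,a)\in E\}$; the right-hand side is nonzero by~\eqref{charfaithomegaB}.

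The final step converts this nonvanishing of diagonal projections into faithfulness of $\pi_{S,V}$ on $\TT_\add(\nxnx)$. The natural route is to invoke the analog of \cite[Theorem~3.7]{lr1} for the partial crossed product $C(\Omega_\add)\rtimes(\qxqx)$: since $\qxqx$ is amenable the full and reduced crossed products agree, and the standard faithful conditional expectation onto $C(\Omega_\add)$ combined with topological freeness of the partial $\qxqx$-action on $\Omega_\add$ reduces faithfulness of $\pi_{S,V}$ to nonvanishing of precisely the above products. The main obstacle is making this step precise: \cite[Theorem~3.7]{lr1} is formulated for Nica's algebra $C^*(G,P)$ itself rather than for quotients corresponding to closed invariant subspaces of the Nica spectrum, so one must either verify that its proof carries over to $C(\Omega_\add)\rtimes G$, or check topological freeness of the partial $\qxqx$-action on $\Omega_\add$ directly—the latter reducing, via the $\theta$-description in \S\ref{sec: more gen stuff}, to showing that a generic $B(r,N)\in\Omega_\add$ has trivial isotropy.
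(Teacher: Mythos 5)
Your necessity argument is fine (and slightly different from the paper's, which exhibits a concrete representation on $\ell^2(\Z\rtimes\nx)$ satisfying \eqref{charfaithomegaB}; your observation that the projection is the characteristic function of the nonempty open set containing $B(0,1)$ works just as well). The skeleton of your sufficiency argument is also the paper's: view $\pi_{S,V}$ as a representation of $C(\Omega_\add)\rtimes(\qxqx)$, use amenability to identify full and reduced crossed products, and use topological freeness together with \cite[Theorem~2.6]{elq} to reduce faithfulness to faithfulness on $C(\Omega_\add)$. But you have deferred precisely the two load-bearing steps, and the computation you do carry out (adapting Theorem~\ref{uniqueness for T(nxnx)}) does not substitute for either of them --- indeed it cannot, since once $S$ is unitary the hypothesis of \cite[Theorem~3.7]{lr1} genuinely fails for any $E$ containing a pair $(m,1)$, which is exactly why that theorem does not apply to the quotient and why the paper abandons the ``products over finite $E\subset\nxnx$'' formulation entirely.

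The first missing step is topological freeness of the partial $\qxqx$-action on $\Omega_\add$. This is not a formality: the paper's Proposition~\ref{topfreeomegaB} shows that the points with \emph{finite} $N$ and $N\nmid w$ are moved by $\theta_{(w,y)}$, and then must prove these are dense in $\Omega_\add$, which requires approximating an arbitrary $B(s,M)$ (with $M$ possibly supernatural or with $M\mid w$) by points $B(s_n,p_nM)$ or $B(s(M_n),M_n)$ with carefully chosen finite moduli. Your phrase ``a generic $B(r,N)$ has trivial isotropy'' names the goal but does none of this. The second missing step is the passage from ``$\pi_{S,V}$ is faithful on $C(\Omega_\add)$'' to the specific condition \eqref{charfaithomegaB}: one must show that every nonzero invariant ideal of $C(\Omega_\add)$ contains one of the functions $\prod_{p\in F}\prod_{l=0}^{p-1}(1-1_{(l,p)})\big|_{\Omega_\add}$. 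The paper does this via Lemma~\ref{smallset} (every basic open set $V((k,a),H)\cap\Omega_\add$ contains a nonempty set of the special form $W((k,a),F)\cap\Omega_\add$, using that elements $(l,1)\in H$ impose no constraint on sets $B(r,N)$) and Proposition~\ref{nonzeroideals} (using invariance to translate $\chi_{W((k,a),F)}$ back to the identity). Without these two ingredients your argument establishes only that certain projections are nonzero, not that $\pi_{S,V}$ is injective.
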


To prove this theorem, we want to apply \cite[Theorem~2.6]{elq}, and hence we need to know that the partial action of $\qxqx$ on $\Omega_{\add}$ is  topologically free.  The action of $\qxqx$ on $\Omega_{\mult}$, on the other hand, is not topologically free --- indeed, the stability subgroups are large. Thus we do not expect  there to be an analogue of Theorem~\ref{faithrepsomegaB} for the multiplicative boundary quotient.

Recall from \cite[Proposition~2.1]{lac} that  the sets 
\begin{equation}\label{eq-topology}
V((m,c),K):=\{\omega\in\Omega:(m,c)\in \omega\text{ and }(m,c)h\not\in\omega\text{ for all }h\in K\},
\end{equation}
where   $K$ is a finite subset of $\N\rtimes\N^\times\setminus\{(0,1)\}$ and $(m,c)\in \nxnx$, form a basis of open and closed sets for the topology on $\Omega$.

\begin{prop}\label{topfreeomegaB}
The partial action of $\qxqx$  is  topologically free on  $\Omega_{\add}$ but not on $\Omega_\mult$.  
\end{prop}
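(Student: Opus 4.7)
The plan is to handle the two halves separately, starting with the negative assertion on $\Omega_\mult$, which has a very concrete obstruction: an isolated point.

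For $\Omega_\mult$, I will produce a nontrivial $g$ whose fixed-point set contains a nonempty open set. My candidate is the singleton $\{A(0,\nabla)\}$, which I claim is already open in $\Omega_\mult$. To see this I intersect the basic open set $V((0,1),\{(1,1)\})$ from \eqref{eq-topology} with $\Omega_\mult$: every $B(r,\nabla)$ contains each $(k,1)$ because the residue condition modulo $1$ is automatic, so all $B(r,\nabla)$ are excluded by $(1,1)\notin\omega$; among the $A(m,\nabla)$ only $m=0$ survives the conjunction of $(0,1)\in\omega$ and $(1,1)=(0,1)(1,1)\notin\omega$. Since $A(0,\nabla)=\{(0,a):a\in\nx\}$, a direct hereditary-closure computation gives $(0,b)\cdot A(0,\nabla)=\{(0,ba):a\in\nx\}$, whose hereditary closure in $\nxnx$ is again $A(0,\nabla)$. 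Hence $\theta_{(0,b)}$ fixes the open set $\{A(0,\nabla)\}$ for every $b\in\nx\setminus\{1\}$, defeating topological freeness.

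For $\Omega_\add$, I fix $g=(q,b)\in\qxqx\setminus\{(0,1)\}$ and a nonempty basic open set $V:=V((m_0,c),K)\cap\Omega_\add$, and aim to produce $B(r,N)\in V$ that is moved by $\theta_g$ (or lies outside its domain). My first step is to establish a formula of the shape $\theta_{(q,b)}(B(r,N))=B(q+br,bN)$, with $bN$ the appropriate supernatural product/quotient, on the (nonempty) set where the action is defined; being fixed then requires $bN=N$ in $\NN$ \emph{and} $q+(b-1)r\equiv 0$ in $\Z/N$. Only finitely many primes appear in the data $(c,K)$; letting $P_0$ be that finite set, I retain complete control over the components $(e_p(N),r_p)$ of any candidate $(r,N)$ for primes $p\notin P_0$.

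I then split into two cases. If $b=1$, necessarily $q\in\Z\setminus\{0\}$, and I pick a prime $p\notin P_0$ with $p\nmid q$ and build $B(r,N)\in V$ with $e_p(N)=\infty$; then $q\ne 0$ in the factor $\Z_p\subseteq\Z/N$, so $B(r+q,N)\ne B(r,N)$. If $b\ne 1$, I write $b=s/t$ in lowest terms, pick a prime $p\notin P_0$ dividing $st$, and build $B(r,N)\in V$ with $e_p(N)=0$; then $e_p(bN)\ne e_p(N)$, so $bN\ne N$ and the image cannot equal $B(r,N)$. In both cases the remaining constraints of $V$ involve only primes in $P_0$ and can be met by choosing the $p'$-components of $(r,N)$ for $p'\in P_0$ appropriately, producing the desired $B(r,N)\in V$ not fixed by $\theta_g$; this shows the fixed-point set has empty interior for every nontrivial $g$. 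The main obstacle I foresee is that first step: pinning down the formula $\theta_{(q,b)}(B(r,N))=B(q+br,bN)$ and the exact description of its domain, especially when $b\in\qx$ has nontrivial denominator so the action is only partially defined. Once the formula is in hand, the prime-by-prime flexibility argument is routine.
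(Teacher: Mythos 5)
Your treatment of $\Omega_{\mult}$ is correct and is essentially the paper's own argument specialised to $m=0$: the paper proves $\theta_{(s,t)}(A(m,\nabla))=A(s+tm,\nabla)$, observes that each singleton $\{A(m,\nabla)\}=V((m,1),\{(1,1)\})\cap\Omega_{\mult}$ is open, and chooses $(s,t)$ with $s/(1-t)\in\N$; your element $(0,b)$ fixing the isolated point $A(0,\nabla)$ is an instance of this, and your computation of the hereditary closure is right.

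For $\Omega_{\add}$, however, there is a genuine gap in the case $b\neq 1$. You propose to pick a prime $p\notin P_0$ dividing $st$, where $b=s/t$ in lowest terms and $P_0$ is the finite set of primes occurring in the data $(c,K)$ of the basic open set. But $st$ has only finitely many prime divisors, and nothing prevents all of them from lying in $P_0$: take $g=(0,2)$ and a basic open set with $c=2$, and no such $p$ exists, so your construction of a moved point $B(r,N)$ with $e_p(N)=0$ cannot even begin. Repairing this forces you to arrange $e_p(N)<\infty$ for some $p\mid st$ \emph{with} $p\in P_0$, i.e.\ to re-examine the constraints coming from $K$ at the primes of $P_0$ rather than only adjusting components away from $P_0$; this is doable (truncating exponents while keeping $c\mid N$ does not destroy membership in $V((m_0,c),K)$), but it is precisely the part your plan dismisses as routine. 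In addition, the formula $\theta_{(q,b)}(B(r,N))=B(q+br,bN)$ together with a description of its domain for supernatural $N$ is asserted rather than proved --- you flag this yourself as the main obstacle. The paper sidesteps both difficulties simultaneously: it establishes the formula only for finite $N\in\nx$, notes that every such $B(r,N)$ is moved when $y\neq1$ (resp.\ when $N\nmid w$ for $y=1$), and then proves that $\{B(r,N):N\in\nx,\ N\nmid w\}$ is dense in $\Omega_{\add}$ via the explicit approximations $B(s_n,p_nM)\to B(s,M)$ (adjoining large primes $p_n\nmid w$) and $B(s(M_n),M_n)\to B(s,M)$. Adopting that density argument is likely easier than completing your case analysis.
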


\begin{proof}  
Recall that a partial action $\theta$ of a group $G$ on a space $X$ is topologically free when $\{x\in X:\theta_t(x)=x\}$ has empty interior for every $t\in G\setminus\{e\}$. (When we write $\theta_t(x)$ we implicitly assert that $x$ is in the domain ${U}_{t^{-1}}$ of $x\mapsto \theta_t(x)$.) Equivalently, $\theta$ is topologically free if and only if  each  $X_t:=(X\setminus{U}_{t^{-1}}) \cup\{x:\theta_t(x)\not=x\}$ is dense.

Now consider $X=\Omega_\add$ and $G=\qxqx$. Fix $(w,y)\in\qxqx$. Let $N\in \nx$ and $r\in \Z/N$; a calculation similar to one in the proof of \cite[Proposition~5.7]{lr} shows that $\theta_{(w,y)}(B(r,N))=B(w+ry, yN)$.   So if $y\neq 1$ then $\theta_{(w,y)}(B(r,N))\not=B(r,N)$ and 
\begin{equation}\label{eq-dense0}
X_{(w,y)}\supset\{B(r,N):N\in \nx, r\in \Z/N\}.
\end{equation}
Now consider  $y=1$. 
If $w\notin\Z$, then $\theta_{(w,1)}$ has domain $\emptyset$; if $w\in\Z$, then $\theta_{(w,1)}( B(r,N))=B(r,N)$  if and only if $w\in N\Z$. So  
\begin{equation}\label{eq-dense}
X_{(w,1)}\supset\{B(r,N):N\in \nx, r\in \Z/N, N\notdiv w\}.
\end{equation}
In view of \eqref{eq-dense0} and \eqref{eq-dense}, it suffices to fix $B(s,M)$ in $\Omega_{\add}$ and prove that we can approximate $B(s,M)$ by elements of the form $B(r,N)$ with $N\in \nx$ and $N\notdiv w$.

First suppose that $M\in \nx$, and suppose that $M\mid w$ (for otherwise \eqref{eq-dense} implies that there is nothing to prove). Choose an increasing sequence $\{p_n\}$ of primes $p_n$ such that $p_n\notdiv w$, and  $s_n\in \Z/{p_nM}$ such that $s_n(M)=s$. We claim that $B(s_n,p_nM)\to B(s,M)$ in $\Omega_{\add}$. To see this, let $(k,a)\in \nxnx$, and recall that
\[
B(s_n,p_nM)^{^{\wedge}}(1_{(k,a)})=\begin{cases}
1&\text{ if $a\mid p_nM$ and $k\in s_n(a)$},\\
0&\text{ otherwise.}
\end{cases}
\]
If $a\notdiv M$, then for large $n$ we have $a\notdiv p_nM$, and hence
\[
B(s_n,p_nM)^{^{\wedge}}(1_{(k,a)})\to 0=B(s,M)^{^{\wedge}}(1_{(k,a)});
\]
if $a\mid M$, then $s_n(a)=s_n(M)(a)=s(a)$ and 
\[
B(s_n,p_nM)^{^{\wedge}}(1_{(k,a)})=1\Longleftrightarrow B(s,M)^{^{\wedge}}(1_{(k,a)})=1.
\]
Either way, 
\[
B(s_n,p_nM)^{^{\wedge}}(1_{(k,a)})\to B(s,M)^{^{\wedge}}(1_{(k,a)}),
\]
which says that $B(s_n,p_nM)\to B(s,M)$ in $\Omega_{\add}$.

Second, suppose that $M$ has infinitely many prime factors. We choose $\{M_n\}$ in $\nx$ such that $M_n\notdiv w$, $M_n\mid M_{n+1}$ and, for every $a\in\nx$,  $a\mid M\Longleftrightarrow a\mid M_n$ for large $n$. Then an argument like that in the preceding paragraph shows that for every $(k,a)\in\nxnx$, we have
\[
B(s(M_n),M_n)^{^{\wedge}}(1_{(k,a)})\to B(s,M)^{^{\wedge}}(1_{(k,a)}),
\]
and $B(s(M_n),M_n)\to B(s,M)$ in $\Omega_{\add}$. Thus $B(s,M)$ be belongs to the closure of $X_{(w,y)}$ as required.  So the action on $\Omega_\add$ is topologically free.

Now consider the action on $\Omega_\mult$. Let $(s,t)\in\Q\rtimes\Q_+^*$ and  $A(m,\nabla)\in\Dom\theta_{(s,t)}$. We claim that $\theta_{(s,t)}(A(m,\nabla))=A(s+tm,\nabla)$.  
Let $(n, c) \in  \theta_{(s,t)}(A(m,\nabla))$.  Since $\theta_{(s,t)}(A(m,\nabla))$ is a hereditary closure there exists $(j,b)\in A(m,\nabla)$ such that
$
(n,c)\leq (s,t)(j,b)=(s+tj,tb)\in\nxnx
$.
Now  $(tb)^{-1}(s+tm-(s+tj))=b^{-1}(m-j)\in\N$. So $(s+tj, tb)$, and hence $(n,c)$, are in $A(s+tm,\nabla)$. So $\theta_{(s,t)}(A(m,\nabla))\subset A(s+tm,\nabla)$.
Conversely,  let $(k,a)\in A(s+tm,\nabla)$. Choose $b\in\N$ such that $a^{-1}tb\in\N$.  Then $(a^{-1}(s+tm-k), a^{-1}tb)\in\nxnx$, which says that $(k,a)\leq (s+tm, tb)=(s,t)(m,b)\in \theta_{(s,t)}(A(m,\nabla))$. Thus $(k,a)\in \theta_{(s,t)}(A(m,\nabla))$,  and $\theta_{(s,t)}(A(m,\nabla))= A(s+tm,\nabla)$ as claimed.

We now choose $(s,t)\in(\Q\rtimes\Q_+^*)\setminus\{(0,1)\}$ such that $s/(1-t)$ is in $\N$. Then 
\begin{align*}
\{\omega\in\Omega_{\mult}:\theta_{(s,t)}(\omega)=\omega\} &\supset \{A(m,\nabla)\in\Dom\theta_{(s,t)}:A(m,\nabla)=A(s+tm,\nabla)\}\nonumber\\
&=\{ A(s/(1-t),\nabla)\}.\label{top free eq}
\end{align*}
But  each singleton $\{A(m,\nabla)\}=V((m,1),\{(1,1)\})\cap\Omega_\mult$ is a basic open set in $\Omega_{\mult}$. (To see the set equality, note that if $A(n,\nabla)\in V((m,1),\{(1,1)\})$, then $n-m\in\N$ and $n-(m+1)\not\in\N$ implies $n=m$, and if $B(r,\nabla)\in V((m,1),\{(1,1)\})$, then $m\in r(1)$ and $m+1\not\in r(1)$, which is impossible.) Thus, for our choice of $(s,t)$ the set $\{\omega\in\Omega_{\mult}:\theta_{(s,t)}(\omega)=\omega\}$ has nonempty interior, and the action on $\Omega_\mult$ is not topologically free. 
\end{proof}

\begin{lemma}\label{smallset}
Suppose that $U$ is a nonempty open set in $\Omega_{\add}$. Then there exist $(k, a)\in \nxnx$ and a finite set $F$ of primes such that
\[
W((k,a),F):=\{\omega\in\Omega:(k,a)\in\omega\text{ and }(k,a)(l,p)\notin\omega\text{ for all $p\in F$, $0\leq l<p$}\}
\]
is nonempty and satisfies $W((k,a),F)\cap\Omega_{\add}\subset U$.
\end{lemma}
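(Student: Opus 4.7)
The plan is to translate a given basic neighborhood $V((k,a),K)$ (from the basis \eqref{eq-topology} of $\Omega$) into one of the special form $W((k,a),F)$ by choosing, for each $h=(m,c)\in K$, a prime divisor of $c$ and collecting these primes into $F$. The underlying point is that elements $B(r,N)$ of $\Omega_\add$ are insensitive to adding multiples of $a$ to the first coordinate, so the only obstruction the second coordinate $c$ of $h$ can contribute comes through its prime factorization.

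First I would pick $\omega\in U$, and use \eqref{eq-topology} together with $U$ being relatively open in $\Omega_\add$ to find $(k,a)\in \nxnx$ and a finite $K\subset\nxnx\setminus\{(0,1)\}$ with $\omega\in V((k,a),K)$ and $V((k,a),K)\cap\Omega_\add\subseteq U$. Next I would record the crucial reduction that every $(m,c)\in K$ must have $c\ge 2$: any $B(r,N)\in\Omega_\add$ containing $(k,a)$ also contains $(k+am,a)$, since both reduce to $k\pmod a$. If $K$ contained some $(m,1)$ (necessarily with $m\ge 1$, as $(0,1)\notin K$), this would force $V((k,a),K)\cap\Omega_\add=\emptyset$, contradicting $\omega\in V((k,a),K)\cap\Omega_\add$.

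Now for each $h=(m,c)\in K$ I would pick a prime $p_h$ dividing $c$, and set $F=\{p_h:h\in K\}$. The main computation is to show that, for $\omega'=B(r,N)\in\Omega_\add$ with $(k,a)\in\omega'$, the condition that $(k+al,ap)\notin\omega'$ for all $0\le l<p$ is equivalent to $ap\nmid N$. Indeed, if $ap\mid N$, then since $r(ap)$ reduces to $r(a)$ mod $a$ and $k\equiv r(a)\pmod a$, there is a unique $l\in\{0,\dots,p-1\}$ with $k+al\equiv r(ap)\pmod{ap}$; the converse is automatic. With this in hand, for each $h=(m,c)\in K$ the $W$-condition at $p_h$ gives $ap_h\nmid N$, hence $ac\nmid N$ (because $p_h\mid c$), and therefore $(k,a)h=(k+am,ac)\notin\omega'$. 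This proves $W((k,a),F)\cap\Omega_\add\subseteq V((k,a),K)\cap\Omega_\add\subseteq U$.

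Finally, nonemptiness is immediate: $B(k\bmod a,a)$ lies in $W((k,a),F)\cap\Omega_\add$, since $a\mid a$, $k\equiv k\pmod a$, and $ap\nmid a$ for every prime $p$. I do not foresee any serious obstacle; the only subtlety is the initial observation that $K$ cannot contain elements of the form $(m,1)$, which is essential because multiplicative exclusions via $F$ can never kill such additive obstructions in $\Omega_\add$.
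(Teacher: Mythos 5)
Your proof is correct and follows essentially the same route as the paper's: pass to a basic neighborhood $V((k,a),K)$, observe that no $(m,c)\in K$ can have $c=1$ because elements of $\Omega_{\add}$ are insensitive to translating the first coordinate by multiples of $a$, choose a prime divisor of each $c$ to form $F$, and show $W((k,a),F)\cap\Omega_{\add}\subset V((k,a),K)$ via the divisibility argument $ap_h\nmid N\Rightarrow ac\nmid N$. The only (harmless) difference is your nonemptiness witness $B(k\bmod a,\,a)$, which is slightly simpler than the paper's choice of $B(r,aq)$ for a prime $q\notin F$.
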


\begin{proof}
Since the sets $V((m,c), K)$ defined at \eqref{eq-topology} form a basis for the topology on $\Omega$, there exist $(k,a)\in \nxnx$ and a finite subset $H$ of $\N\rtimes\N^\times\setminus\{(0,1)\}$ such that  $V((k,a),H)\cap\Omega_\add$ is nonempty and contained in $U$. Since
\[
(k,a)(l,1)=(k+al,a)\in B(r,N) \Longleftrightarrow (k,a)\in B(r,N),
\]
every $(l,b)\in H$ has $b>1$, and there is a prime $p_h$ such that $p_h\mid b$. Set $F:=\{p_h:h\in H\}$. Note that $W((k,a),F)$ is nonempty:  if $q$ is a prime which is not in $F$ and $r\in\Z/aq$ satisfies $k\in r(aq)$, then $B(r,aq)$ belongs to  $W((k,a),F)$. 

We claim that $W((k,a),F)\cap\Omega_{\add}\subset V((k,a),H)$.
Suppose $B(r,N)\in W((k,a),F)$. Then $(k,a)\in B(r,N)$ and, for $p\in F$ and each $0\leq l<p$, we have $(k+al,ap)\notin B(r,N)$. Since $(k,a)\in B(r,N)$, we have $a\mid N$ and $k\in r(a)$.   We claim that $ap$ does not divide $N$ for every $p\in F$.  To see this, suppose that  $ap$ divides $N$ for some $p\in F$. Then $k\in r(a)$ implies that $k+al\in r(ap)$ for some $l$, and then $(k+al,ap)\in B(r,N)$ contradicts $B(r,N)\in W((k,a),F)$. So $ap$ does not divide $N$ for every $p\in F$, and  $p$ does not divide $N$ for every $p\in F$.  Now fix $h=(l,b)\in H$. There exists $p_h\in F$ such that $p_h\mid  b$, and since $p_h$ does not divide $N$ it follows that $b$ does not divide $N$. Thus $ab$ does not divide $N$, and  hence  $(k,a)(l,b)=(k+al, ab) \notin B(r,N)$. Thus $B(r,N)\in V((k,a),H)$, as required. 
\end{proof}

Next we need to convert the hypothesis that the representation $\pi_{S,V}$ is non-zero on $C(\Omega_{\add})$ into the hypothesis \eqref{charfaithomegaB} appearing in Theorem~\ref{faithrepsomegaB}.

\begin{prop}\label{nonzeroideals}
Suppose that $I$ is a non-zero ideal in $C(\Omega_{\add})$ and that $I$ is invariant for the partial action $\theta$ of $\qxqx$. Then there is a finite set $F$ of primes such that
\[
\Big(\prod_{p\in F}\prod_{l=0}^{p-1}(1-1_{(l,p)})\Big)\Big|_{\Omega_{\add}}
\]
belongs to $I$.
\end{prop}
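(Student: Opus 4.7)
My plan is to locate a clopen subset of $\Omega_\add$ inside the open support of $I$ whose characteristic function has a convenient factored form, and then to apply the partial automorphism associated with a suitable element of $\nxnx$ to translate this function into the desired product.

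Since $C(\Omega_\add)$ is commutative and $I$ is a nonzero closed ideal, there is a nonempty open set $U_I\subset\Omega_\add$ with $I=C_0(U_I)$; in particular every continuous function on $\Omega_\add$ supported in $U_I$ lies in $I$. Applying Lemma~\ref{smallset} to $U_I$ produces $(k,a)\in\nxnx$ and a finite set $F$ of primes such that $W((k,a),F)\cap\Omega_\add$ is a nonempty subset of $U_I$. Since $W((k,a),F)$ is clopen in $\Omega$ and $(k,a)(l,p)=(k+al,ap)$, its characteristic function is the continuous function
\[
g:=1_{(k,a)}\prod_{p\in F}\prod_{l=0}^{p-1}\bigl(1-1_{(k+al,\,ap)}\bigr),
\]
whose restriction to $\Omega_\add$ therefore lies in $I$.

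Next, I apply the partial automorphism $\alpha_{(k,a)^{-1}}$ induced by $\theta_{(k,a)}$. Since $(k,a)\in\nxnx$ the domain $U_{(k,a)^{-1}}$ equals $\Omega$, while the codomain $U_{(k,a)}=\{\omega:(k,a)\in\omega\}$ contains $\supp g$, so $\alpha_{(k,a)^{-1}}(g|_{\Omega_\add})\in C(\Omega_\add)$, and invariance of $I$ yields $\alpha_{(k,a)^{-1}}(g|_{\Omega_\add})\in I$. Using the formula $\theta_{(k,a)}(B(r,N))=B(k+ra,aN)$ from the proof of Proposition~\ref{topfreeomegaB}, one verifies directly that $1_{(k,a)}(\theta_{(k,a)}(B(r,N)))=1$ for all $B(r,N)\in\Omega_\add$, and
\[
1_{(k+al,\,ap)}(\theta_{(k,a)}(B(r,N)))=1_{(l,p)}(B(r,N)),
\]
since both sides equal $1$ precisely when $p\mid N$ and $l\equiv r\pmod p$. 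Combining these identifies $\alpha_{(k,a)^{-1}}(g|_{\Omega_\add})$ with $\prod_{p\in F}\prod_{l=0}^{p-1}(1-1_{(l,p)})|_{\Omega_\add}$, which is the element of $I$ we sought.

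The one step where some care is needed is the last identification, namely tracking how $\theta_{(k,a)}$ shifts the indices $(l,p)\mapsto(k+al,ap)$. Conceptually this is the function-space counterpart of the $\TT(\nxnx)$-identity $(s^kv_a)(1-s^lv_pv_p^*s^{*l})(s^kv_a)^*=1_{(k,a)}-1_{(k+al,\,ap)}$, which makes it transparent why conjugation by the isometry $s^kv_a$ inverts the shift and converts $g$ into the desired product of $(1-1_{(l,p)})$'s.
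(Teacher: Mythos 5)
Your argument is correct and follows essentially the same route as the paper's: apply Lemma~\ref{smallset} to the open support of $I$ to get a clopen set $W((k,a),F)\cap\Omega_{\add}$ whose characteristic function lies in $I$, and then pull it back along $\theta_{(k,a)}$ using invariance, checking that the index shift $(l,p)\mapsto(k+al,ap)$ is undone. The only (immaterial) difference is that you invoke the identification of closed ideals of $C(\Omega_{\add})$ with $C_0$ of open sets to place $\chi_{W((k,a),F)\cap\Omega_{\add}}$ in $I$, where the paper instead squeezes $\epsilon\chi_{W((k,a),F)\cap\Omega_{\add}}\leq|f|^2$ and uses that closed ideals are hereditary.
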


\begin{proof}
Since $I$ is non-zero, it contains a non-zero function $f$, and then $|f|^2=ff^*$ is a nonnegative function in $I$. Since $f$ is continuous, there exist $\epsilon>0$ and an open set $U\subset\Omega_{\add}$ such that $|f|^2>\epsilon$ on $U$. By Lemma~\ref{smallset}, there exist $(k,a)\in\nxnx$ and a finite set $F$ of primes such that $W((k,a),F)\cap \Omega_{\add}\subset U$. Then $0\leq \epsilon\chi_{W((k,a),F)\cap \Omega_{\add}}\leq |f|^2$, and since $I$ is  hereditary, we deduce that $\chi_{W((k,a),F)\cap \Omega_{\add}}$ belongs to $I$. Since $I$ is invariant under the partial action of $\qxqx$ and 
\[
\chi_{W((k,a),F)}=\prod_{p\in F}\prod_{l=0}^{p-1}(1_{(k,a)}-1_{(k,a)(l,p)})=\theta_{(k,a)}\Big(\prod_{p\in F}\prod_{l=0}^{p-1}(1-1_{(l,p)})\Big),
\]
applying $\theta_{(k,a)^{-1}}$ gives the result.
\end{proof}

\begin{proof}[Proof of Theorem~\ref{faithrepsomegaB}] 
Example~\ref{ex-faithful} shows that there are families $S$ and $\{V_p:p\in\primes\}$ satisfying (T1)--(T3), (T5), (Q6) and Equation~\eqref{charfaithomegaB}, and thus \eqref{charfaithomegaB} must be satisfied in the universal algebra $\TT_\add(\nxnx)$ and in any faithful representation of it.

For the converse, we use Proposition~\ref{anotherquotient} to view  $\pi_{S,V}$  as a representation of the partial crossed product $C(\Omega_{\add})\rtimes(\Q\rtimes\Q_+^*)$.  
By Proposition~\ref{topfreeomegaB} the partial action on $\Omega_{\add}$ is topologically free.  Since $\qxqx$ is amenable,  the  reduced and full partial crossed products coincide. Thus \cite[Theorem~2.6]{elq} implies that $\pi_{S,V}$ is faithful on 
\[
C(\Omega_{\add})\rtimes_r(\Q\rtimes\Q_+^*)=C(\Omega_{\add})\rtimes(\Q\rtimes\Q_+^*)
\]
if and only if it is faithful on $C(\Omega_{\add})$.

Suppose that $\pi_{S,V}$ is not faithful on $C(\Omega_{\add})$. We have $\pi_{S,V}(1_{(l,p)})=S^lV_pV_p^*{S^*}^l$ for each $p\in\PP$ and $0\le l\le p-1$. Since $\ker(\pi_{S,V}|_{C(\Omega_\add)})$ is an invariant ideal in $C(\Omega_\add)$,   Proposition~\ref{nonzeroideals} gives a finite set $F$ of primes such that
\[
0=\pi_{S,V}\Big(\prod_{p\in F}\prod_{l=0}^{p-1}(1-1_{(l,p)})\Big)=\prod_{p\in F}\prod_{l=0}^{p-1}(1-S^lV_pV_p^*S^{*l}).
\]
But this contradicts the hypothesis \eqref{charfaithomegaB}. So $\pi_{S,V}$ is faithful on $C(\Omega_{\add})$ and hence also on $C(\Omega_{\add})\rtimes(\Q\rtimes\Q_+^*)$.
\end{proof}

\begin{example}\label{ex-faithful}
Define $S$ on $\ell^2(\Z\rtimes\nx)$ by $Se_{(m,a)}=e_{(m+1,a)}$, and for each $p\in\PP$ define $V_p$ on $\ell^2(\Z\rtimes\nx)$ by $V_pe_{(m,a)}=e_{(pm,pa)}$. Routine calculations on basis vectors show that the isometries $S$ and $\{V_p\}$  generate the  algebra of $\Z\rtimes\nx$, and that they satisfy (T1)--(T3), (T5) and (Q6). Equation~\eqref{charfaithomegaB} holds because $S^lV_pV_p^*S^{*l}e_{(0,1)}=0$ for all $l$ and $p$, so Theorem~\ref{faithrepsomegaB} implies that $\pi_{S,V}$ is faithful on $\TT_{\add}(\nxnx)$. Thus $\pi_{S,V}$ is an isomorphism of  $\TT_{\add}(\nxnx)$ onto $\TT(\Z\rtimes\nx)$.
\end{example}

\section{KMS states on the boundary quotients of $\TT(\nxnx)$}\label{introKMS}

We now study the dynamics $\sigma:\R\to \Aut\TT(\nxnx)$ characterised, using the presentation of $\TT(\nxnx)$ as $C^*(s,v_p)$,  by $\sigma_t(s)=s$ and $\sigma_t(v_p)=p^{it}v_p$. We consider the following diagram of quotient maps:
\[
\xymatrix{
&\TT(\nxnx)\ar[dl]_{q_{\add}}\ar[dr]^{q_{\mult}}&\\
\TT_{\add}(\nxnx)\ar[dr]&&\TT_{\mult}(\nxnx)\ar[dl]\\
&\qn,&
}
\]
where $q_{\mult}$ is the quotient map by the relations $1=\sum_{k=0}^{p-1}(s^kv_p)(s^kv_p)^*$ and $q_{\add}$ is the one by the relation $1=ss^*$. Since these relations are invariant under $\sigma$, the quotients carry induced dynamics (all of which we will denote by $\sigma$). 

Cuntz proved in \cite{c2} that $(\qn,\sigma)$ has a unique KMS state, and that this state has inverse temperature $1$. Laca and Raeburn proved in \cite[Lemma~10.4]{lr} that every KMS state of $\TT(\nxnx)$ vanishes on the ideal generated by $1-ss^*$, and hence factors through the quotient map $q_{\add}$ to give a KMS state of the additive boundary quotient $\TT_{\add}(\nxnx)$. Thus parts (1), (2) and (3) of \cite[Theorem~7.1]{lr} describe the KMS states of $(\TT_{\add}(\nxnx),\sigma)$, and imply in particular that this system has a phase transition at inverse temperature $\beta=2$. 

As in \cite{CM2} and \cite{lr}, we distinguish between the KMS$_\infty$ states, which are by definition weak* limits of KMS$_\beta$ states as $\beta\to \infty$, and the ground states, which are by definition the states $\phi$ such that $z\mapsto \phi(c\sigma_z(d))$ is bounded on the upper half-plane for every pair of analytic elements $c,d$.

\begin{prop}\label{ground}
Every KMS$_\beta$ state of $(\TT(\nxnx),\sigma)$ factors through $q_\add$. A ground state of $(\TT(\nxnx),\sigma)$ factors through $q_\add$ if and only if it is a KMS$_\infty$ state. 
\end{prop}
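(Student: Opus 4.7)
The first assertion is precisely \cite[Lemma~10.4]{lr}, so the work lies in the second, which I would prove as two implications.

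\emph{KMS$_\infty$ implies factoring through $q_{\add}$.} Since $1-ss^*$ is a projection, a state $\phi$ of $\TT(\nxnx)$ factors through $q_{\add}$ if and only if $\phi(1-ss^*)=0$, and this is a weak*-closed condition on the state space. The first assertion tells us every KMS$_\beta$ state satisfies it, hence so does any weak* limit of KMS$_\beta$ states, and in particular every KMS$_\infty$ state.

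\emph{Factoring through $q_{\add}$ implies KMS$_\infty$.} Let $\phi$ be a ground state of $(\TT(\nxnx),\sigma)$ with $\phi=\tilde\phi\circ q_{\add}$. Because $q_{\add}$ is $\sigma$-equivariant, $\tilde\phi$ is a ground state of $(\TT_{\add}(\nxnx),\sigma)$. The plan is then to argue that on $\TT_{\add}(\nxnx)$ every ground state is already a KMS$_\infty$ state. Granted this, I write $\tilde\phi = \lim_{\beta\to\infty}\tilde\phi_\beta$ in the weak* topology for some KMS$_\beta$ states $\tilde\phi_\beta$ of $\TT_{\add}(\nxnx)$, and then $\phi = \lim_{\beta\to\infty} \tilde\phi_\beta\circ q_{\add}$ exhibits $\phi$ as a weak*-limit of KMS$_\beta$ states of $\TT(\nxnx)$, i.e.\ as a KMS$_\infty$ state.

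The principal obstacle is therefore to identify the ground states of $\TT_{\add}(\nxnx)$ with its KMS$_\infty$ states, equivalently to see that the $\beta=\infty$ phase transition visible in $\TT(\nxnx)$ (\cite[Theorem~7.1(4)]{lr}) collapses after passing to the additive quotient. I expect this to follow by tracing through the parametrizations in \cite[Theorem~7.1(3)--(4)]{lr}: ground states of $\TT(\nxnx)$ are parametrized by states of the Toeplitz subalgebra $C^*(s)\cong\TT(\N)$, while KMS$_\infty$ states correspond to states of the quotient $\TT(\N)/\KK\cong C(\T)$. Since $q_{\add}$ kills $1-ss^*$ and hence the image of $\KK$, both parametrizations reduce to states of $C(\T)$ at the level of $\TT_{\add}(\nxnx)$, and checking that each such parametrizing state really is realised in the weak*-limit as $\beta\to\infty$ of the KMS$_\beta$ states given in \cite[Theorem~7.1(3)]{lr} completes the argument.
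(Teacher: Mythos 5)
Your treatment of the ``KMS$_\infty$ implies factoring'' direction rests on a false equivalence. For a general state $\phi$ of $\TT(\nxnx)$, the condition $\phi(1-ss^*)=0$ is strictly weaker than vanishing on the ideal generated by $1-ss^*$, which is what factoring through $q_{\add}$ requires: a state can annihilate a projection without annihilating the ideal that projection generates. A counterexample lives among the ground states themselves: by \cite[Theorem~7.1(4)]{lr} there is a ground state $\phi$ whose restriction to $C^*(s)\cong\TT(\N)$ is the vector state $T\mapsto (Te_1\mid e_1)$; it satisfies $\phi(1-ss^*)=0$, yet $\phi(s(1-ss^*)s^*)=1$, so $\phi$ does not factor through $q_{\add}$. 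The repair is immediate and is what the paper does: \cite[Lemma~10.4]{lr} says that each KMS$_\beta$ state vanishes on the \emph{whole ideal} $\ker q_{\add}$, and vanishing on a fixed ideal is itself a weak*-closed condition on the state space, so any weak* limit of KMS$_\beta$ states also vanishes on $\ker q_{\add}$.

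For the converse direction your outline is in the right spirit but defers exactly the step that carries the content, and the detour through $\TT_{\add}(\nxnx)$ risks circularity: in the paper, the statement ``every ground state of $\TT_{\add}(\nxnx)$ is KMS$_\infty$'' is Corollary~\ref{ground=KMS}, which is \emph{deduced from} Proposition~\ref{ground}, so you may not invoke it unless you prove it independently. The paper's direct argument is the one you gesture at: if the ground state $\phi$ factors through $q_{\add}$, then $\phi|_{C^*(s)}$ kills the ideal of $C^*(s)$ generated by $1-ss^*$ (a copy of $\KK$), hence is integration against a probability measure $\mu$ on $\T$; this is also the restriction to $C^*(s)$ of the KMS$_\infty$ state $\psi_{\infty,\mu}$; and formula (8.6) of \cite[Lemma~8.4]{lr} (equivalently, the injectivity of $\phi\mapsto\phi|_{C^*(s)}$ on ground states in \cite[Theorem~7.1(4)]{lr}) then forces $\phi=\psi_{\infty,\mu}$. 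Your phrase ``checking that each such parametrizing state really is realised'' is precisely this identification of $\phi$ with $\psi_{\infty,\mu}$, and it needs the determination-by-restriction fact to be cited explicitly; without it the argument is incomplete.
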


\begin{proof}
For $\beta<\infty$, \cite[Lemma~10.4]{lr} implies that all the KMS$_\beta$ states vanish on the ideal generated by $1-ss^*$, which by Proposition~\ref{anotherquotient} is the kernel of $q_{\add}$. Thus all these states factor through $q_{\add}$, and so does any weak* limit of such states. This proves the first assertion and the ``if'' direction of the second assertion. 

So suppose that $\phi$ is a ground state of $(\TT(\nxnx),\sigma)$ which factors through $q_{\add}$. Then $\phi$ vanishes on the ideal in $\TT(\nxnx)$ generated by $1-ss^*$, and hence $\phi|_{C^*(s)}$ vanishes on the ideal $J$ in $C^*(s)$ generated by $1-ss^*$. Thus $\phi|_{C^*(s)}$ factors through a state of $C^*(s)/J$, which is isomorphic to $C(\T)$. Thus there is a probability measure $\mu$ on $\T$ such that $\phi(s^ms^{*n})=\int_{\T} z^{m-n}\,d\mu(z)$. But then $\phi|_{C^*(s)}$ coincides with the restriction of the KMS$_\infty$ state $\psi_{\infty,\mu}$ (see the proof of Theorem~7.1(4) in \cite[\S9]{lr}). The formula (8.6) in \cite[Lemma~8.4]{lr} now shows that $\phi=\psi_{\infty,\mu}$.
\end{proof}

\begin{cor}\label{ground=KMS}
Every ground state of $(\TT_{\add}(\nxnx), \sigma)$ is a KMS$_\infty$ state.
\end{cor}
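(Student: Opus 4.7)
The plan is to lift a ground state of $\TT_{\add}(\nxnx)$ to a ground state of $\TT(\nxnx)$ via composition with $q_{\add}$, invoke Proposition~\ref{ground} to see that the lifted state is KMS$_\infty$, and then push the approximating KMS$_\beta$ states back down through $q_{\add}$.

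More precisely, given a ground state $\phi$ of $(\TT_{\add}(\nxnx),\sigma)$, I first form $\tilde\phi:=\phi\circ q_{\add}$ on $\TT(\nxnx)$. Since $q_{\add}$ is equivariant for $\sigma$, it sends $\sigma$-analytic elements to $\sigma$-analytic elements, so for analytic $c,d\in \TT(\nxnx)$ we have
\[
\tilde\phi(c\sigma_z(d))=\phi\bigl(q_{\add}(c)\,\sigma_z(q_{\add}(d))\bigr),
\]
which is bounded on the upper half-plane because $\phi$ is a ground state and $q_{\add}(c), q_{\add}(d)$ are analytic in $\TT_{\add}(\nxnx)$. Thus $\tilde\phi$ is a ground state of $(\TT(\nxnx),\sigma)$, and by construction it factors through $q_{\add}$.

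Proposition~\ref{ground} then tells me that $\tilde\phi$ is a KMS$_\infty$ state, so there exist KMS$_{\beta_n}$ states $\psi_n$ of $(\TT(\nxnx),\sigma)$ with $\beta_n\to\infty$ and $\psi_n\to \tilde\phi$ in the weak* topology. By the first part of Proposition~\ref{ground}, each $\psi_n$ vanishes on $\ker q_{\add}$, so $\psi_n=\phi_n\circ q_{\add}$ for a unique state $\phi_n$ of $\TT_{\add}(\nxnx)$; since $q_{\add}$ is surjective and equivariant, $\phi_n$ is automatically KMS$_{\beta_n}$ for the quotient dynamics.

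Finally, for any $a\in\TT_{\add}(\nxnx)$ choose a lift $\tilde a\in\TT(\nxnx)$; then $\phi_n(a)=\psi_n(\tilde a)\to\tilde\phi(\tilde a)=\phi(a)$, so $\phi_n\to\phi$ weak*, proving that $\phi$ is a KMS$_\infty$ state of $(\TT_{\add}(\nxnx),\sigma)$. There is no real obstacle: the argument is a routine descent through the quotient map, and the only point that requires a moment's attention is that ground states and the KMS$_\infty$ approximation transfer cleanly along $q_{\add}$ because it is a surjective equivariant $*$-homomorphism.
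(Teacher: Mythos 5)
Your proof is correct and follows essentially the same route as the paper: compose with $q_{\add}$ to get a ground state of $(\TT(\nxnx),\sigma)$ that factors through $q_{\add}$, apply both assertions of Proposition~\ref{ground}, and push the approximating KMS$_{\beta_n}$ states back down to the quotient. The only difference is that you spell out the (easy) verification that $\phi\circ q_{\add}$ is a ground state and that $\phi_n\to\phi$ weak*, which the paper leaves implicit.
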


\begin{proof}
Suppose that $\phi$ is a ground state of $(\TT_{\add}(\nxnx), \sigma)$. Then $\phi\circ q_{\add}$ is a ground state of $(\TT(\nxnx),\sigma)$ which factors through $q_{\add}$, and hence by the second assertion in Proposition~\ref{ground} is a KMS$_\infty$ state of $(\TT(\nxnx),\sigma)$. Thus $\phi\circ q_{\add}$ is the weak*-limit of a sequence $\{\psi_n\}$ of KMS$_{\beta_n}$ states. Now the first assertion of Proposition~\ref{ground} says that each $\psi_n=\phi_n\circ q_{\add}$ for a unique state $\phi_n$ of $\TT_{\add}(\nxnx)$, and the states $\phi_n$ are KMS$_{\beta_n}$ states which converge weak* to $\phi$. In other words, $\phi$ is a KMS$_\infty$ state.
\end{proof}

Theorem~7.1(4) of \cite{lr} says that the map $\phi\mapsto \phi|_{C^*(s)}$ is an affine homeomorphism of the set of ground states of $(\TT(\nxnx),\sigma)$ onto the state space of $C^*(s)=\TT(\N)$, and hence there are many ground states of $(\TT(\nxnx),\sigma)$ which do not vanish on the ideal generated by $1-ss^*$. Thus there are many more ground states than KMS$_\infty$ states. 
We interpret this as saying that $(\TT(\nxnx),\sigma)$ exhibits  a second phase transition at infinity. Corollary~\ref{ground=KMS}, on the other hand, says that $(\TT_{\add}(\nxnx),\sigma)$ does not have a phase transition at infinity.

Since a KMS$_\beta$ state $\phi$ satisfies $\phi(s^kv_pv_p^*s^{*k})=p^{-\beta}$ (see \cite[Lemma~8.3]{lr}), it satisfies 
\[
\phi\big({\textstyle \sum_{k=0}^{p-1}s^kv_pv_p^*s^{*k}}\big)=pp^{-\beta}=p^{1-\beta}.
\]
Since $q_\mult\big({\textstyle \sum_{k=0}^{p-1}s^kv_pv_p^*s^{*k}}\big)=1$, this means that no KMS$_\beta$ state with $\beta>1$ can factor through the quotient map $q_{\mult}$, and the system $(\TT_{\mult}(\nxnx),\sigma)$ has only the one KMS$_1$ state  lifted from $(\qn,\sigma)$. Lemma~8.4 of \cite{lr} implies that every ground state $\phi$ satisfies $\phi(s^kv_pv_p^*s^{*k})=p^{-\beta}=0$, and hence does not factor through $q_{\mult}$. Thus $(\TT_{\mult}(\nxnx),\sigma)$ does not have any ground states.

\section{Cuntz's  $\qn$ as an Exel crossed product}\label{main result sec}

For each $a\in \nx$ and $f\in C(\T)$ define $\alpha_a(f)(z)=f(z^a)$. Then $\alpha_a$ is an endomorphism of $C(\T)$, and the function $L_a:C(\T)\to C(\T)$ defined by $L_a(f)(z)=a^{-1}\sum_{w^a=z}f(w)$ is a \emph{transfer operator} for $\alpha_a$, in the sense that $L_a$ is a positive linear map from $C(\T)$ to $C(\T)$ satisfying the \emph{transfer-operator identity}
\begin{equation}\label{eq-transferopid}
L(\alpha_a(f)g)=fL_a(g).
\end{equation} 
We have $\alpha_a\alpha_b=\alpha_{ab}$ and $L_bL_a=L_{ab}$, and hence the $(C(\T),\alpha_a,L_a)$ combine to give an \emph{Exel system} $(C(\T),\nx,\alpha,L)$ of the sort studied by Larsen in \cite{l} (see \cite[Proposition~5.1]{l}). In this section we prove that our boundary quotient $\TT_{\add}(\nxnx)$ and Cuntz's $\QQ_{\N}$ are $C^*$-algebras naturally associated to the Exel system $(C(\T),\nx,\alpha,L)$. Before making this precise, we need to review Larsen's construction.

Suppose that $(A,P,\alpha,L)$ is an Exel system as in \cite{l}. We make the simplifying assumptions that $A$ is unital and that $\alpha_x(1)=1=L_x(1)$ for $x\in P$ (which  hold for our system above). For each $x\in P$, we make $A_{L_x}:=A$ into a bimodule over $A$ by $a\cdot m\cdot b=am\alpha_x(b)$ for $a,b\in A$ and $m\in A_{L_x}$, we define a pre-inner product on $A_{L_x}$ by $\langle m,n\rangle_{L_x}=L_x(m^*n)$, and we complete $A_{L_x}$ to get a Hilbert bimodule $M_{L_x}$ (see \cite{e1} or \cite[\S3]{br}). To help keep the copies of $A$ straight, we write $q_x(a)$ for the image of $a\in A_{L_x}$ in $M_{L_x}$, and $\phi_x:A\to \LL(M_{L_x})$ for the homomorphism implementing the left action of $A$ on $M_{L_x}$. These bimodules combine to give a product system in the sense of Fowler \cite[\S2]{f}: the maps $q_x(a)\otimes q_y(b)\mapsto q_{xy}(a\alpha_x(b))$ extend to bimodule isomorphisms of $M_{L_x}\otimes_A M_{L_y}$ onto $M_{L_{xy}}$, and these isomorphisms give the disjoint union $M_L:=\bigsqcup M_{L_x}$ the structure of a semigroup. The bimodule $M_{L_e}$ over the identity $e$ of $P$ is the bimodule ${}_AA_A$ in which all the operations are  given by multiplication in $A$, and the products of $a\in M_{L_e}$ and $m\in M_{L_x}$ are given by the module actions.

A \emph{representation}\footnote{These were called ``Toeplitz representations'' in \cite{f}, and we have deliberately changed the name for the reasons we discuss in Remark~\ref{pontificate}.} $\psi$ of a product system $M$ in a $C^*$-algebra $B$ consists of linear maps $\psi_x:M_{x}\to B$ such that $\psi_A:=\psi_e$ is a homomorphism of $C^*$-algebras, $\psi_x(m)\psi_y(n)=\psi_{xy}(mn)$, and $\psi_A(\langle q_x(a),q_x(b)\rangle_{x})=\psi_x(q_x(a))^*\psi_x(q_x(b))$. We are interested in two special classes of representations which reflect extra properties of the setup.

Suppose that $\psi$ is a representation of $M=\bigsqcup M_x$ in $B$. For each $x\in P$, there is a representation $\psi^{(x)}$ of $\KK(M_{x})$ in $B$ such that $\psi^{(x)}(\Theta_{m,n})=\psi_x(m)\psi_x(n)^*$ for $m,n\in M_{x}$. Following Fowler \cite{f}, we say that $\psi$ is \emph{Cuntz-Pimsner covariant} if 
\[
\psi_A(a)=\psi^{(x)}(\phi_x(a))\ \text{ for every $a\in A$, $x\in P$ such that $\phi_x(a)\in \KK(M_{x})$.}
\]
The \emph{Cuntz-Pimsner algebra} $\OO(M)$ of the product system $M$ is generated by a universal Cuntz-Pimsner covariant representation $j_M$ of $M$ in $\OO(M)$.

Suppose that $\psi$ is a representation of $M$ in $B$, and $P$ is the positive cone in a quasi-lattice ordered group $(G,P)$. If $x,y\in P$ satisfy $x\leq y$, then $y=xp$ for some $p\in P$, the product structure gives an isomorphism of $M_{x}\otimes_A M_{p}$ onto $M_{y}$, and we use this isomorphism to define a homomorphism $\iota_x^y:\LL(M_{x})\to \LL(M_{y})$ such that $\iota_x^y(T)(mn)=(Tm)n$ for $m\in M_{x}$, $n\in M_{p}$. Suppose the product system is compactly aligned in the sense that
\[
R\in \KK(M_{x})\text{ and } T\in \KK(M_{y})\Longrightarrow \iota_x^{x\vee y}(R)\iota_y^{x\vee y}(T)\in \KK(M_{x\vee y});
\]
we then say that $\psi$ is \emph{Nica covariant} if
\begin{equation}\label{defNicacov}
\psi^{(x)}(R)\psi^{(y)}(T)=\begin{cases}
\psi^{(x\vee y)}(\iota_x^{x\vee y}(R)\iota_y^{x\vee y}(T))&\text{if $x\vee y<\infty$}\\
0&\text{otherwise.}
\end{cases}
\end{equation}
The \emph{Nica-Toeplitz algebra} $\NT(M)$ is generated by a universal Nica-covariant representation $i_M$. (Fowler calls this $\TT_{\cov}(M)$ -- see Remark~\ref{pontificate} below.) As a point of notation, we will write $i_{M,x}$ instead of $i_{M_x}$ or $i_{M_{L_x}}$.

The relationship between $\NT(M)$ and $\OO(M)$ is a bit murky, and there has been some debate about whether Fowler found the optimal definiton of $\OO(M)$. For example, Sims and Yeend argue convincingly that Nica covariance should have been built into the definition of $\OO(M)$, so that $\OO(M)$ is a quotient of $\NT(M)$ \cite{sy}. However, the systems of interest to us have extra features which make the debate irrelevant:

\begin{example}\label{ourex}
Consider the Exel system $(C(\T),\nx,\alpha,L)$ described at the beginning of the section, and Larsen's product system $M_L$ over $\nx$. We know from \cite[Lemma~3.3]{LR2} that each $C(\T)_{L_a}$ is already complete in the inner product defined by $L_a$, so $M_{L_a}=\{q_a(f):f\in C(\T)\}$. It follows from work of Packer and Rieffel \cite[Proposition~1]{pr}  that if $\iota$ is the usual generator $\iota:z\mapsto z$, then $\{q_a(\iota^k):0\leq k<a\}$ is an orthonormal basis for $M_{L_a}$ (see \cite[Lemma~2.6]{ehr}). The reconstruction formula for this basis says that the identity operator $1$ on $M_{L_a}$ is the finite-rank operator $\sum_{k=0}^{a-1}\Theta_{q_a(\iota^k),q_a(\iota^k)}$, and hence  every adjointable operator $T=\sum_{k=0}^{a-1}\Theta_{q_a(T(\iota^k)),q_a(\iota^k)}$ also has finite rank. In particular, every $\phi(f)$ is compact, and the product system is compactly aligned (by \cite[Proposition~5.8]{f}). (Essentially the same product system is studied in \cite{y} as an example of a topological $k$-graph.)

The semigroup $\nx$, which is the positive cone in $(\Q_+^*,\nx)$, also has some particularly nice properties. It is not only quasi-lattice ordered, it is lattice ordered in the sense that \emph{every} pair $a,b\in \nx$ has a least upper bound $a\vee b=\lcm(a,b)$. 
\end{example}

When the left action of $A$ on each $M_x$ is by compact operators and the semigroup is lattice-ordered, \cite[Theorem~6.3]{f} implies that the Cuntz-Pimsner algebra $\OO(M)$ is a quotient of $\NT(M)$. We write $Q$ for the quotient map, and identify $Q\circ i_M$ with the universal Cuntz-Pimsner covariant representation $j_M$. Larsen works explicitly with abelian semigroups, for which the notions of quasi-lattice ordered and lattice ordered coincide. She does not explicitly assume that $\phi_x(A)\subset\KK(M_x)$, but this is true in all her examples.

Larsen defines her crossed product $A\rtimes_{\alpha,L}P$ following Exel's path in \cite{e1}, and then proves that, under our hypotheses, it is isomorphic to the Cuntz-Pimsner algebra $\OO(M_L)$ \cite[Proposition~4.3]{l}. We define $i_{P}:P\to \NT(M_L)$ by $i_{P}(a)=i_{M,a}(q_a(1))$, and then define the  \emph{Nica-Toeplitz algebra} $\NT(A,P,\alpha,L)$ of the Exel system to be the triple $(\NT(M_L),i_{M},i_{P})$. Similarly, we define $j_{P}:P\to \OO(M_L)$ by $j_{P}(a)=j_{M,a}(q_a(1))$, and then $(\OO(M_L),j_M,j_{P})$ is the crossed product $A\rtimes_{\alpha,L}P$ of the Exel system.

\begin{thm}\label{MLthm}
Let $(C(\T),\nx,\alpha,L)$ be the Exel system discussed at the beginning of \textnormal{\S\ref{main result sec}} and in Example~\ref{ourex}. Then there are isomorphisms 
\begin{enumerate}
\item $\phi_1$ of $\TT_{\add}(\nxnx)$ onto $\NT(M_L)$ such that $\phi_1(s)=i_{C(\T)}(\iota)$ and $\phi_1(v_p)=i_{\nx}(p)$ for $p$ prime, and

\smallskip
\item\label{item2-MLthm}  $\phi_2$ of $\QQ_{\N}$ onto $C(\T)\rtimes_{\alpha,L}\nx=\OO(M_L)$ such that $\phi_2(s)=j_{C(\T)}(\iota)$ and $\phi_2(v_p)=j_{\nx}(p)$ for $p$ prime.
\end{enumerate}
\end{thm}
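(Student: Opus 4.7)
The plan is to build $\phi_1$ and its inverse by dovetailing the universal property of $\TT_{\add}(\nxnx)$ (Proposition~\ref{prop-present-add}) with that of $\NT(M_L)$. To define $\phi_1$, I would set $S = i_{C(\T)}(\iota)$ and $V_p = i_{\nx}(p) = i_{M,p}(q_p(1))$, then verify that $(S,\{V_p\})$ satisfies (T1)--(T3), (T5), and (Q6). Relation (Q6) is automatic since $\iota$ is unitary in $C(\T)$. For (T1), the bimodule identity $q_p(1)\cdot\iota = q_p(\alpha_p(\iota)) = q_p(\iota^p)$ gives $V_pS = S^pV_p$, while (T2) is commutativity in $\nx$. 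Relation (T5) reduces, via the inner-product rule of the product system, to $V_p^*S^kV_p = i_{C(\T)}(L_p(\iota^k))$, which vanishes for $1\le k<p$ because the $p$-th roots of unity sum to zero. Relation (T3) is the most delicate: Nica covariance of $i_M$ together with the isomorphism $M_{L_p}\otimes M_{L_q}\cong M_{L_{pq}}$ for distinct primes gives $V_pV_p^*V_qV_q^* = V_{pq}V_{pq}^*$ (the tensor $\phi_p(1)\otimes\phi_q(1)$ of the rank-one projections becomes the rank-one projection onto $q_{pq}(1)$); combined with $V_pV_q = V_{pq} = V_qV_p$, this forces $V_p$ to commute with $V_qV_q^*$, and the identity $V_q^*V_pV_q = V_p$ then delivers (T3).

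For the inverse, I would construct a Nica-covariant representation $\psi$ of $M_L$ in $\TT_{\add}(\nxnx)$. Since (Q6) makes $s$ a unitary, there is a unique $*$-homomorphism $\psi_{C(\T)}\colon C(\T)\to\TT_{\add}(\nxnx)$ with $\psi_{C(\T)}(\iota)=s$. Writing $V_a := \prod_p v_p^{e_p(a)}$ (well-defined by (T2)), I would set $\psi_a(q_a(f)) = \psi_{C(\T)}(f)V_a$. This is well-defined because $L_a$ is faithful on $C(\T)$, so $q_a$ is injective. The representation axioms reduce to two identities in $\TT_{\add}(\nxnx)$: the commutation $V_a\psi_{C(\T)}(f) = \psi_{C(\T)}(\alpha_a(f))V_a$, proved by induction over prime factors from (T1); and the inner-product identity $V_a^*S^kV_a = S^{k/a}$ when $a\mid k$ and $0$ otherwise, which follows from (T5) by the same inductive strategy. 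For Nica covariance, I would note that (T1) and (Q6) jointly imply (T4), so by \cite[Theorem~4.1]{lr} the isometries $s$ and $\{v_p\}$ give a Nica-covariant isometric representation $w$ of $\nxnx$ in $\TT_{\add}(\nxnx)$. The orthonormal basis $\{q_a(\iota^k)\colon 0\le k<a\}$ of $M_{L_a}$ then yields the expansion $\psi^{(a)}(1) = \sum_{k=0}^{a-1} w_{(k,a)}w_{(k,a)}^*$, and the Nica relation \eqref{defNicacov} for $\psi$ translates directly into the Nica relations for $w$ in $\nxnx$, using that $(k_1,a)\vee(k_2,b) = (k,\lcm(a,b))$ with $k$ the smallest element of $(k_1+a\N)\cap(k_2+b\N)$.

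Once the two maps are shown inverse on generators ($s\leftrightarrow i_{C(\T)}(\iota)$ and $v_p\leftrightarrow i_{\nx}(p)$), part (1) is complete. For part (2), the key observation is that the Cuntz-Pimsner covariance relation $i_{C(\T)}(1) = i_M^{(p)}(1)$ for each prime $p$, when unpacked using the orthonormal basis of $M_{L_p}$, is precisely $1 = \sum_{k=0}^{p-1} S^kV_pV_p^*S^{*k}$, which under $\phi_1$ pulls back to (Q5). The remaining Cuntz-Pimsner relations $i_{C(\T)}(f) = i_M^{(a)}(\phi_a(f))$ for $f\in C(\T)$ and composite $a\in\nx$ follow from the prime cases by multiplicativity of the product system and density of polynomials in $\iota,\iota^*$ in $C(\T)$, so (Q5) generates the kernel of $\NT(M_L)\to\OO(M_L)$ and $\phi_1$ descends to the desired $\phi_2$. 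The main obstacle is bookkeeping: the Nica covariance check for $\psi$ requires matching the orthonormal-basis decomposition of $\psi^{(a)}(1)$ with the residue-class structure of least upper bounds in $\nxnx$, and the derivation of (T3) from Nica covariance of the product system, while clean, also requires careful tracking of adjoints and projections.
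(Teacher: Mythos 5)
Your proposal is correct in outline, and the forward direction (defining $S=i_{C(\T)}(\iota)$, $V_p=i_{\nx}(p)$ and checking (T1)--(T3), (T5), (Q6), with (T3) extracted from Nica covariance of $i_{M_L}$) is essentially the paper's argument. Where you genuinely diverge is in proving that $\phi_1$ is injective and in passing to part (2). For injectivity you build an explicit inverse: a Nica-covariant representation $\psi$ of $M_L$ in $\TT_{\add}(\nxnx)$ with $\psi_a(q_a(f))=\psi_{C(\T)}(f)V_a$, invoking the universal property of $\NT(M_L)$. The paper instead applies its own uniqueness theorem (Theorem~\ref{faithrepsomegaB}) to $\pi_{S,V}$, verifying the hypothesis \eqref{charfaithomegaB} by composing with the Fock representation $l_*$ and observing that each $l_p(q_p(\iota^k))^*$ kills the vacuum fibre $M_{L_1}$; surjectivity is a short generator computation. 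Your route avoids the partial-crossed-product/topological-freeness machinery behind Theorem~\ref{faithrepsomegaB}, but the cost is real: Nica covariance of $\psi$ must be checked for arbitrary compacts $R\in\KK(M_{L_a})$, $T\in\KK(M_{L_b})$, not just for $\psi^{(a)}(1)$, and your sketch of the reduction to the Nica relations for $w$ in $\nxnx$ is where most of the unfinished bookkeeping lives. For part (2), you propose to show that the ideal $\ker(\NT(M_L)\to\OO(M_L))$ is generated by the prime Cuntz--Pimsner relations, i.e.\ by (Q5); this is correct and is exactly the strategy the paper uses for the analogous statement about $\TT_{\mult}(\nxnx)$ in Theorem~\ref{MKthm}(b) (via Lemma~\ref{ideal_mult} and the reconstruction formula $\phi_a(f)=\sum_{k=0}^{a-1}\Theta_{q_a(f\iota^k),q_a(\iota^k)}$), but for Theorem~\ref{MLthm}(b) the paper takes a shortcut you miss: once $Q\circ\phi_1$ is seen to kill (Q5) it factors through a surjection $\phi_2$ of $\QQ_\N$, and injectivity of $\phi_2$ is immediate from the simplicity of $\QQ_\N$, with no need to identify the kernel of $Q$.
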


Larsen has told us that she, Hong and Szymanski have obtained Theorem~\ref{MLthm} by other methods.
 
\begin{proof}
We define $S:=i_{C(\T)}(\iota)$ and $V_p:=i_{\nx}(p)=i_{M,p}(q_p(1))$ for $p$ prime, and  prove that $(S,V_p)$ satisfy the relations (T1)--(T3), (T5) and (Q6) in the presentation of $\TT_{\add}(\nxnx)$ of Proposition~\ref{prop-present-add}. Since $1\cdot q_a(f)=q_a(f)$ and $q_a(f)\cdot 1=q_a(f\alpha_a(1))=q_a(f)$ for all $f\in C(\T)$ and $a\in\nx$, $i_{C(\T)}(1)$ is an identity for $C^*(i_{M_L}(m),i_{C(\T)}(f))=\NT(M_L)$, and $i_{C(\T)}$ is unital. This implies, first, that $V_p^*V_p=i_{C(\T)}(\langle q_p(1),q_p(1)\rangle)=i_{C(\T)}(1)=1$, so that each $V_p$ is an isometry, and, second, that $S=i_{C(\T)}(\iota)$ is unitary, which is (Q6). 

A quick calculation shows that $\alpha_p(\iota)=\iota^p$, and hence
\begin{align*}
V_pS&=i_{M,p}(q_p(1))i_{C(\T)}(\iota)=i_{M,p}(q_p(1)\cdot\iota)=i_{M,p}(q_p(\alpha_p(\iota)))\\
&=i_{M,p}(q_p(\iota^p))=i_{C(\T)}(\iota^p)i_{M,p}(q_p(1))=S^pV_p,
\end{align*}
which is (T1). For distinct primes $p$ and $r$, we have $q_p(1)q_r(1)=q_{pr}(1\alpha_p(1))=q_{pr}(1)$, and hence 
\begin{align*}
V_pV_r&=i_{M,p}(q_p(1))i_{M,r}(q_r(1))=i_{M,pr}(q_p(1)q_r(1))\\
&=i_{M,pr}(q_{pr}(1))=i_{M,rp}(q_{rp}(1))=V_rV_p,
\end{align*}
which is (T2). For (T5), we recall that $\{q_p(\iota^k):0\leq k<p\}$ is an orthonormal basis for $M_{L_p}$. Thus for $k$ satisfying $1\leq k<p$, we have
\begin{align*}
V_p^*S^kV_p&=i_{M,p}(q_p(1))^*i_{C(\T)}(q_p(\iota^k))i_{M,p}(q_p(1))\\
&=i_{M,p}(q_p(1))^*i_{M,p}(q_p(\iota^k))\\
&=i_{C(\T)}({\langle q_p(1),q_p(\iota^k)\rangle}_{L_p})=0,
\end{align*}
which is (T5). 

To check (T3), we need to invoke Nica covariance of the representation $i_{M_L}$. Suppose $p$ and $r$ are distinct primes. Then $p\vee r=pr$, and Nica covariance says that
\begin{equation}\label{Nicacovhere}
i_{M_L}^{(p)}(R)i_{M_L}^{(r)}(T)=i_{M_L}^{(pr)}(\iota_p^{pr}(R)\iota_r^{pr}(T))
\ \text{ for $R\in \KK(M_{L_p})$ and $T\in \KK(M_{L_r})$.}
\end{equation}
We aim to apply this with $R=\Theta_{q_p(1),q_p(1)}$ and $T=\Theta_{q_r(1),q_r(1)}$, and we need to compute the product appearing on the right-hand side of \eqref{Nicacovhere}. Since the endomorphisms $\alpha_p$ and $\alpha_r$ are unital, we can realise each $q_{pr}(f)\in M_{L_{pr}}$ as a product $q_p(f)q_r(1)$ or as $q_r(f)q_p(1)$. Thus, recalling that $\iota_p^{pr}(R)(xy)=(Rx)y$ for $x\in M_{L_p}$ and $y\in M_{L_r}$, we have
\begin{align}
\iota_p^{pr}(\Theta_{q_p(1),q_p(1)})\iota_r^{pr}(\Theta_{q_r(1),q_r(1)})(q_{pr}(f))&=\iota_p^{pr}(\Theta_{q_p(1),q_p(1)})\big(\Theta_{q_r(1),q_r(1)}(q_r(f))q_p(1)\big)\notag\\
&=\iota_p^{pr}(\Theta_{q_p(1),q_p(1)})\big((q_r(1)\cdot\langle q_r(1),q_r(f)\rangle)q_p(1)\big)\notag\\
&=\iota_p^{pr}(\Theta_{q_p(1),q_p(1)})\big(q_r(\alpha_r(L_r(f))q_p(1)\big)\notag\\
&=\iota_p^{pr}(\Theta_{q_p(1),q_p(1)})\big(q_p(\alpha_r(L_r(f))q_r(1)\big)\notag\\
&=q_p(\alpha_pL_p\alpha_rL_r(f))q_r(1)\notag\\
&=q_{pr}(\alpha_pL_p\alpha_rL_r(f)).\label{beforestick}
\end{align}
Since $p$ and $r$ are distinct primes, and in particular coprime, the sets $\{w\in\T:w^p=z^r\}$ and $\{v^r\in\T: v^p=z\}$ are the same, and a calculation using this shows that $L_p\alpha_r=\alpha_rL_p$. Thus 
\begin{align}\label{stickingpt}
\iota_p^{pr}(\Theta_{q_p(1),q_p(1)})\iota_r^{pr}(\Theta_{q_r(1),q_r(1)})(q_{pr}(f))&=q_{pr}(\alpha_{pr}L_{pr}(f))\\
&=\Theta_{q_{pr}(1),q_{pr}(1)}(q_{pr}(f)).\notag
\end{align}
Now Nica covariance in the form of \eqref{Nicacovhere} implies that
\begin{align*}
V_pV_p^*V_rV_r^*&= i_{M_L}^{(p)}(\Theta_{q_p(1),q_p(1)})i_{M_L}^{(r)}(\Theta_{q_r(1),q_r(1)})\\
&= i_{M_L}^{(pr)}\big(\iota_p^{pr}(\Theta_{q_p(1),q_p(1)})\iota_r^{pr}(\Theta_{q_r(1),q_r(1)})\big)\\
&=i_{M_L}^{(pr)}\big(\Theta_{q_{pr}(1),q_{pr}(1)}\big)\\
&=V_{pr}V_{pr}^*=V_{pr}V_{rp}^*=V_pV_rV_p^*V_r^*,
\end{align*}
which implies (T3) because $V_p$ and $V_r$ are isometries.

Thus $(S,V_p)$ satisfy the relations (T1)--(T3), (T5) and (Q6), and Proposition~\ref{anotherquotient} gives us a homomorphism $\pi_{S,V}:\TT_{\add}(\nxnx)\to \NT(M_L)$ taking $(s,v_p)$ to $(S,V_p)$. To prove that $\pi_{S,V}$ is faithful, we verify that it satisfies the hypothesis \eqref{charfaithomegaB} of Theorem~\ref{faithrepsomegaB}. For $p$ prime and $0\leq k<p$, we have
\begin{align*}
1-S^kV_pV_p^*S^{*k}
&=1-i_{C(\T)}(\iota)^ki_{M,p}(q_p(1))i_{M,p}(q_p(1))^*i_{C(\T)}(\iota)^{*k}\\
&=1-i_{M,p}(q_p(\iota^k))i_{M,p}(q_p(\iota^k))^*.
\end{align*}
Let $l:M_L\to \LL(\FF(M_L))$ be the Fock representation of \cite[\S2]{f}, so that for $m\in M_{L_p}$, $l_p(m)$ is multiplication by $m$ in the sense of the product system. Proposition~2.8 of \cite{f} gives a homomorphism $l_*:\NT(M_L)\to \LL(\FF(M_L))$ such that $l_*\circ i_{M_L}=l$. In particular we have
\begin{equation*}\label{eq-fock}l_*(1-S^kV_pV_p^*S^{*k})=l_{C(\T)}(1)-l_p(q_p(\iota^k))l_p(q_p(\iota^k))^*.
\end{equation*}
For each $a\in\nx$, $l_p(q_p(\iota^k)):M_{L_a}\to M_{L_{ap}}$, and the adjoint $l_p(q_p(\iota^k))^*$ maps $M_{L_{ap}}$ to $M_{L_{a}}$ and vanishes on $M_{L_b}$ when $b\notin p\nx$. In particular, $l_p(q_p(\iota^k))^*$ is zero on $M_{L_1}=C(\T)$.  So
$l_*(1-S^kV_pV_p^*S^{*k})$ is the identity on $M_{L_1}$, and so is each finite product
\[l_*\Big( \prod_{p\in F}\prod_{k=0}^{p-1} (1-S^kV_pV_p^*S^{*k})\Big)
=\prod_{p\in F}\prod_{k=0}^{p-1}  l_*(1-S^kV_pV_p^*S^{*k}).\]  This implies in particular that $\prod_{p\in F}\prod_{k=0}^{p-1} (1-S^kV_pV_p^*S^{*k})\neq 0$ for any finite subset $F$ of primes. Thus Theorem~\ref{faithrepsomegaB} implies that $\pi_{S,V}$ is faithful.

To see that $\pi_{S,V}$ is surjective, we note first that $i_{C(\T)}(\iota^k)=S^k$ belongs to the range of $\pi_{S,V}$, and hence so does $i_{C(\T)}(f)$ for every $f\in C(\T)$. Next consider a typical element $q_a(f)$ of $M_{L_a}$. From the definition of multiplication in the product system (and remembering that the $\alpha_b$ are unital), we have
\[
q_a(f)=f\cdot q_a(1)=f\cdot\Big(\prod_{p\mid a}q_{p^{e_p(a)}}(1)\Big)=
f\cdot\Big(\prod_{p\mid a}q_{p}(1)^{e_p(a)}\Big),
\]
and hence $i_{M,a}(q_a(f))=i_{C(\T)}(f)\prod_{p\mid a}V_p^{e_p(a)}$ belongs to the range of $\pi_{S,V}$. Thus the range of $\pi_{S,V}$ contains all the generators of $\NT(M_L)$, and $\pi_{S,V}$ is surjective. Now $\phi_1:=\pi_{S,V}$ has the properties described in part (a).

For  (\ref{item2-MLthm}), we consider the composition $Q\circ \phi_1$ with the quotient map $Q:\NT(M_L)\to C(\T)\rtimes_{\alpha,L}\nx$. Since $(Q\circ i_{M_L}, Q\circ i_{C(\T)})$ is the universal representation $(j_{M_L},j_{C(\T)})$ generating $C(\T)\rtimes_{\alpha,L}\nx$, $Q\circ \phi_1$ maps $s$ into $j_{C(\T)}(\iota)$ and $v_p$ into $j_{\nx}(p)=j_{M,p}(q_p(1))$. For each prime $p$, the pair $(j_{M,p}, j_{C(\T)})$ is Cuntz-Pimsner covariant, and since $\{q_p(\iota^k):0\leq k<p\}$ is an orthonormal basis for $M_{L_p}$, 
\begin{align}
1=j_{C(\T)}(1)&=j_{M_L}^{(p)}(\phi_p(1))=j_{M_L}^{(p)}\Big(\sum_{k=0}^{p-1}\Theta_{q_p(\iota^k),q_p(\iota^k)}\Big)\label{cal-quotient}\\
&=\sum_{k=0}^{p-1}j_{M,p}(q_p(\iota^k))j_{M,p}(q_p(\iota^k))^*\notag\\
&=\sum_{k=0}^{p-1}j_{C(\T)}(\iota)^kj_{M,p}(q_p(1))\big(j_{C(\T)}(\iota)^kj_{M,p}(q_p(1))\big)^*\notag\\
&=\sum_{k=0}^{p-1}Q\circ \phi_1(s^kv_p(s^kv_p)^*)\notag.
\end{align}
Since the relations $1=\sum_{k=0}^{p-1}s^kv_p(s^kv_p)^*$ are the extra relations (Q5) satisfied in the quotient $\QQ_{\N}$ of $\TT_{\add}(\nxnx)$, this calculation implies that $Q\circ \phi_1$ factors through a surjection $\phi_2$ of $\QQ_{\N}$ onto $C(\T)\rtimes_{\alpha,L}\N=\OO(M_L)$; $\phi_2$ is an isomorphism because $\QQ_{\N}$ is simple.
\end{proof}

\begin{remark}\label{pontificate} 
Fowler also associated a ``Toeplitz algebra'' $\TT(M_L)$ to each product system $M$, which is universal for representations that are not necessarily Nica covariant. Larsen then analogously defines the Toeplitz algebra $\TT(A,P,\alpha,L)$ to be Fowler's $\TT(M_L)$. This algebra is in general substantially larger than $\NT(M_L)$. For example, consider the trivial system $(\C,\N^2,\id,\id)$. The associated product system $M_{\id}$ of bimodules over $\C$ is also trivial. Any pair of commuting isometries $V$, $W$ gives a representation of the product system such that $\psi_{(m,n)}(z)=zV^mW^n$, and hence a representation of $\TT(\C,\N^2,\id,\id)$ taking $i_{\N^2}(m,n)$ to $V^mW^n$. But  $\NT(\C,\N^2,\id,\id)$ is universal for $*$-commuting isometries, so it is  $\NT(\C,\N^2,\id,\id)$ rather than  $\TT(\C,\N^2,\id,\id)$ which is isomorphic to the Toeplitz algebra $\TT(\N^2)\subset B(l^2(\N^2)$.

We think it is unfortunate that Fowler chose to call his $\TT(M)$ the Toeplitz algebra of the system. Nica's covariance relation for isometric representations of $P$ is a property of the Toeplitz representation on $l^2(P)$, which under Nica's amenability hypothesis, characterises the Toeplitz algebra among $C^*$-algebras generated by isometric representations. The analogue of the Toeplitz representation for a product system is the Fock representation, and it is automatically Nica covariant in Fowler's sense \cite[Lemma~5.3]{f}. So $\NT(M)$ might have been a better choice for \emph{the} Toeplitz algebra of $M$. 
\end{remark}

\section{The multiplicative boundary quotient as an Exel crossed product}\label{secExelToeplitz}
We know from \cite{ln} that for the Toeplitz algebra of a single bimodule (or equivalently, a product system over $\N$), the phase transition of ground states is indexed by the states of the coefficient algebra.  Our results on the additive boundary quotient and   $\qn$, viewed as algebras associated to the product system of bimodules over $C(\T)$, suggest that the phenomenon in \cite{ln} may hold for product systems over other semigroups. Since the ground states of $\TT(\nxnx)$ are indexed by the states of  the usual Toeplitz algebra $\TT=\TT(\N)$, we were led to conjecture that  $\TT(\nxnx)$ and
$\TT_{\mult}(\nxnx)$ might be realisable as the algebras associated to a product system
of bimodules with coefficients in $\TT$. In this section we confirm this conjecture. 
We find it intriguing that we can apparently  get useful hints  
about the structure of an algebra from an analysis of its KMS states.

In this section, $S$ denotes the unilateral shift on $l^2(\N)$ and 
$V$ is the isometric representation of $\nx$ on $l^2(\N)$ characterised in terms of the usual basis by $V_ae_n=e_{an}$. We recall that $\TT=\clsp\{S^mS^{*n}:m,n\in\N\}$.

\begin{lemma}\label{defEsonT}
There is an Exel system $(\TT,\nx,\beta,K)$ such that $\beta_a(S)=S^a$ and $K_a(T)=V_a^*TV_a$, and $K_a$ is characterised by
\begin{equation}\label{charK}
K_a(S^nS^jS^{*j})=\begin{cases}0&\text{if $a$ does not divide $n$}\\
S^{a^{-1}n}S^iS^{*i}&\text{if $a\mid n$ and $i\in\N$ satisfies $j\in (a(i-1),ai]$.}\end{cases}
\end{equation}
\end{lemma}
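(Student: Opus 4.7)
The plan combines three ingredients: define $\beta$ via the universal property of $\TT$; establish an intertwining identity between $V_a^*$ and $\beta_a$ (from which the transfer-operator identity drops out in one line); and prove the formula \eqref{charK} by a direct basis-vector computation (which will also show $K_a(\TT)\subseteq\TT$). For the first, Coburn's theorem gives that $\TT$ is the universal $C^*$-algebra generated by a non-unitary isometry; since $S^a\in\TT$ is a non-unitary isometry, universality produces a unique unital $*$-endomorphism $\beta_a$ with $\beta_a(S)=S^a$. Both $\beta_a\circ\beta_b$ and $\beta_{ab}$ send $S$ to $S^{ab}$, so $\beta$ is a semigroup action of $\nx$ by endomorphisms.

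For the intertwining, a direct calculation on basis vectors of $l^2(\N)$ gives $V_aS=S^aV_a$ and, more usefully, $V_a^*S^a=SV_a^*$ together with $V_a^*S^{*a}=S^*V_a^*$. The first of these reduces to verifying $V_a^*S^a(1-V_aV_a^*)=0$, which holds because $S^a$ preserves residue classes modulo $a$, so $(1-V_aV_a^*)$ sends basis vectors into the kernel of $V_a^*S^a$. Since $\beta_a$ is $*$-preserving and multiplicative, these identities extend by multiplicativity and norm-continuity to
\[
V_a^*\beta_a(T)=TV_a^* \qquad (T\in\TT).
\]
With this in hand, setting $K_a(T):=V_a^*TV_a$ gives a completely positive unital map satisfying $K_aK_b=K_{ab}$ (from $V_aV_b=V_{ab}$), and the transfer-operator identity follows at once:
\[
K_a(\beta_a(T)T')=V_a^*\beta_a(T)T'V_a=TV_a^*T'V_a=TK_a(T').
\]

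The remaining task is to prove \eqref{charK}, which will also show $K_a(\TT)\subseteq\TT$. Writing $S^nS^jS^{*j}=S^{n+j}S^{*j}$ and acting on $e_k$, the image $V_a^*S^{n+j}S^{*j}V_ae_k$ collapses (when $ak\geq j$) to $V_a^*e_{ak+n}$, which is nonzero precisely when $a\mid n$; in that case it equals $e_{k+n/a}$. Thus $V_a^*S^{n+j}S^{*j}V_a=S^{n/a}P_i$, where $P_i$ is the projection onto $\clsp\{e_l:l\geq i\}$ with $i=\lceil j/a\rceil$, equivalently $j\in(a(i-1),ai]$. The telescoping identity $\sum_{l=0}^{i-1}(S^lS^{*l}-S^{l+1}S^{*(l+1)})=1-S^iS^{*i}$ then gives $P_i=S^iS^{*i}$, yielding \eqref{charK}. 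Since $\{S^nS^jS^{*j}\}$ together with adjoints spans a norm-dense $*$-subalgebra of $\TT$, this confirms $K_a(\TT)\subseteq\TT$. The main obstacle is precisely this basis-vector case analysis: tracking the divisibility condition $a\mid n$, isolating $i=\lceil j/a\rceil$, and recognising the resulting partial-sum projection as $S^iS^{*i}$ via the telescoping identity.
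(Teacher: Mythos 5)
Your proof is correct and follows essentially the same route as the paper: both arguments compute $V_a^*S^nS^jS^{*j}V_a$ on the basis vectors $e_k$ to obtain \eqref{charK} (and hence $K_a(\TT)\subseteq\TT$), and both derive the transfer-operator identity from the intertwining relation $\beta_a(T)V_a=V_aT$. Your explicit appeal to Coburn's theorem to construct $\beta_a$ fills in a step the paper leaves implicit, but otherwise the two proofs coincide.
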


\begin{proof} Since $V_a$ is an isometry, $\Ad V_a^*$ is a bounded linear operator on $B(l^2(\N))$, and it is trivially positive and unital. We cliam that $\Ad V_a^*$ maps $\TT$ into $\TT$. Since $\Ad V_a^*$ is continuous and adjoint-preserving, it suffices to show that every $\Ad(S^nS^js^{*j}$ is in $\T$. We now take $k\in\N$ and compute:
\[
\Ad V_a^*(S^nS^jS^{*j})e_k=V_a^*S^nS^jS^{*j}V_ae_{k}
=\begin{cases}e_{k+a^{-1}n}&\text{if $j\leq ak$ and $a\mid  n$}\\
0&\text{else.}\end{cases}
\]
Now let $i\in \N$ be the unique integer such that $j\in (a(i-1),ai]$.  Then 
\[
S^iS^{*i}e_k=\begin{cases}e_k&\text{if $k\geq i$} \\
0&\text{else}\end{cases} =\begin{cases}e_k&\text{if $j\leq ak$} \\
0&\text{else.}\end{cases} 
\]
So if $a\mid  n$ then $\Ad V_a^*(S^nS^jS^{*j})e_k=S^{a^{-1}n}S^iS^{*i}e_k$, and 
$\Ad V_a^*(S^nS^jS^{*j})=S^{a^{-1}n}S^iS^{*i}$ belongs to $\TT$. Now $K_a:=\Ad V_a^*|_{\TT}$ has the required properties. 

To establish the transfer-operator identity, we check that $\beta_a(S^nS^{*m})V_a=V_aS^nS^{*m}$, and then 
\begin{align*}
K_a(\beta_a(S^mS^{*n})T)&=V_a^*\beta_a(S^mS^{*n})TV_a=(\beta_a(S^nS^{*m})V_a)^*TV_a\\
&=(V_aS^nS^{*m})^*TV_a=S^mS^{*n}K_a(T).
\end{align*}
It is easy to check that both $K$ and $\beta$ are multiplicative.
\end{proof}

We now investigate the product system associated to the Exel system $(\TT,\nx,\beta,K)$ of Lemma~\ref{defEsonT}.  For this product system, the canonical maps $q_a:\TT\to M_{K_a}$   have nontrivial kernel, unlike those for the system $M_L$ in the last section. Part (c) will used in place of the identity $L_p\alpha_r=\alpha_rL_p$ used in the proof of Theorem~\ref{MLthm} (we can see that $K_p\beta_r$ is not the same as $\beta_rK_p$ by applying them both to $SS^*$).

\begin{lemma}\label{bloddylem}
\begin{enumerate}
\item \label{1stq_a}For $j,n\in\N$ and $a\in\nx$, we have 
\[
q_a(S^nS^jS^{*j})=q_a(S^n\beta_a K_a(S^jS^{*j})).
\]

\smallskip
\item\label{2ndq_a} For $a\in \nx$ we have $q_a(S^{*m})=q_a(S^{j}S^{*j}S^{*m})$ whenever $m$ and $m+j$ belong to the same $(a(i-1),ai]$.

\smallskip
\item\label{Nathansfix} If $p$ and $r$ are distinct primes, then
\begin{equation}\label{commuteenough}
q_{pr}(\beta_p K_p\beta_r K_r(T))=q_{pr}(\beta_{pr}K_{pr}(T))\ \text{ for all $T\in \TT$.}
\end{equation}
\end{enumerate}
\end{lemma}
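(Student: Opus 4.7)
The overarching plan is to use the characterisation that $q_a(x)=q_a(y)$ in $M_{K_a}$ if and only if $K_a((x-y)^*(x-y))=0$, since the quotient defining $M_{K_a}$ kills the null vectors of the pre-inner product $\langle\cdot,\cdot\rangle_{K_a}$. In each case I compute the positive element $(x-y)^*(x-y)$ as an explicit projection in $\TT\subset B(\ell^2(\N))$ with finite-dimensional range, and verify that $K_a=V_a^*(\cdot)V_a$ annihilates it. Since $V_a$ sends $e_k$ to $e_{ak}$, the required annihilation reduces to the arithmetic assertion that no multiple of $a$ lies in the support index set of the projection.

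For part~(\ref{1stq_a}), I would exploit the left action to write $q_a(S^n\cdot)=S^n\cdot q_a(\cdot)$, reducing the claim to $q_a(S^jS^{*j})=q_a(S^{ai}S^{*ai})$, where $i\in\N$ is the unique integer with $j\in(a(i-1),ai]$, so that $\beta_aK_a(S^jS^{*j})=S^{ai}S^{*ai}$ by Lemma~\ref{defEsonT}. The difference $S^jS^{*j}-S^{ai}S^{*ai}$ is the orthogonal projection onto $\clsp\{e_l:j\le l<ai\}$, and since $j>a(i-1)$ no multiple of $a$ sits in $[j,ai)$, so its image under $V_a^*(\cdot)V_a$ vanishes. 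Part~(\ref{2ndq_a}) proceeds identically: the difference-squared collapses to $S^m(1-S^jS^{*j})S^{*m}$, the projection onto $\clsp\{e_l:m\le l<m+j\}$, and the hypothesis that $m$ and $m+j$ lie in a common interval $(a(i-1),ai]$ again rules out any multiple of $a$ in the support.

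For part~(\ref{Nathansfix}) I would argue by linearity and continuity, reducing to $T$ in a spanning family for $\TT$. Every $S^mS^{*n}$ equals $S^{m-n}S^nS^{*n}$ when $m\ge n$ or $S^mS^{*m}S^{*(n-m)}$ when $m\le n$, so it suffices to treat $T=S^nS^jS^{*j}$ and $T=S^jS^{*j}S^{*k}$. For each such $T$ the formula of Lemma~\ref{defEsonT} gives explicit expressions for both $\beta_pK_p\beta_rK_r(T)$ and $\beta_{pr}K_{pr}(T)$: coprimality of $p$ and $r$ forces the two sides to vanish under exactly the same divisibility conditions, and when nonzero they have the shape $S^nS^{i_1}S^{*i_1}$ versus $S^nS^{i_2}S^{*i_2}$ (respectively $S^{i_1}S^{*i_1}S^{*k}$ versus $S^{i_2}S^{*i_2}S^{*k}$), with $i_1=p\lceil r\lceil j/r\rceil/p\rceil$ and $i_2=pr\lceil\lceil j/r\rceil/p\rceil$. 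The elementary ceiling identity $\lceil\lceil rn/p\rceil/r\rceil=\lceil n/p\rceil$, valid precisely because $\gcd(p,r)=1$, places $i_1$ and $i_2$ in the same half-open interval $(pr(m-1),prm]$. Applying part~(\ref{1stq_a}) at level $a=pr$ then identifies the two sides in $M_{K_{pr}}$ for $T$ of the first form, and a projection-and-ceiling argument parallel to part~(\ref{2ndq_a}) handles the second form.

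The main obstacle is part~(\ref{Nathansfix}): in contrast to the identity $L_p\alpha_r=\alpha_rL_p$ used in the proof of Theorem~\ref{MLthm}, here $K_p\beta_r\neq\beta_rK_p$ already in $\TT$, as one checks by applying both sides to $SS^*$. The desired identity thus only becomes true modulo the kernel of $q_{pr}$, so the argument must combine the coprimality-driven ceiling arithmetic with part~(\ref{1stq_a}) to absorb the discrepancy between $i_1$ and $i_2$ inside the bimodule quotient.
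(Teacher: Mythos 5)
Your proof is correct. Parts (a) and (b) agree in substance with the paper's: both reduce $q_a(x)=q_a(y)$ to the vanishing of $K_a((x-y)^*(x-y))$ and exploit the fact that the relevant difference of nested projections is again a projection. For part (c), however, you take a genuinely different route. The paper also reduces by linearity to $T=S^nS^jS^{*j}$ or $S^jS^{*j}S^{*n}$ and notes that $R=\beta_pK_p\beta_rK_r(S^jS^{*j})-\beta_{pr}K_{pr}(S^jS^{*j})$ is $\pm$ a projection, but it uses this to convert $K_{pr}(R^*R)=0$ into the \emph{linear} condition $K_{pr}(R)=0$, which it then verifies by the purely algebraic identity $K_{pr}\beta_pK_p\beta_rK_r=K_{pr}=K_{pr}\beta_{pr}K_{pr}$, a consequence of $K_a\beta_a=\id$ (transfer identity plus unitality) and multiplicativity of $K$ and $\beta$; no index arithmetic appears. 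You instead compute both sides explicitly from \eqref{charK}, obtaining range projections indexed by $i_1=p\lceil r\lceil j/r\rceil/p\rceil$ and $i_2=pr\lceil j/(pr)\rceil$, show by elementary estimates that both lie in the same block $(pr(m-1),prm]$, and then invoke part (a) at level $pr$ (plus a part-(b)-style argument for the adjointed spanning elements). Both arguments are sound; the paper's is shorter and entirely ceiling-free, while yours makes visible exactly why the statement holds --- the two orders of ``rounding up'' land in the same length-$pr$ block --- and uses part (a) as a genuine ingredient of part (c), which the paper's proof does not. One small quibble: the identity $\lceil\lceil x/m\rceil/n\rceil=\lceil x/(mn)\rceil$ holds for all positive integers $m,n$, not ``precisely because $\gcd(p,r)=1$''; coprimality is really only needed to match the vanishing conditions ($p\mid n$ and $r\mid n$ iff $pr\mid n$), which you also note.
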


\begin{proof}
The hardest part is (\ref{Nathansfix}), and we'll do it first. By linearity and continuity, it suffices to prove \eqref{commuteenough} for $T$ of the form $S^nS^jS^{*j}$ or $S^jS^{*j}S^{*n}$. Both sides of \eqref{commuteenough} vanish unless $pr\mid n$, so we suppose that $n=prk$. We consider $T=S^nS^jS^{*j}$ first. Note that $S^n=\beta_p\beta_r(S^k)$, so the transfer-operator identity implies that $\beta_p K_p\beta_r K_r(T)=S^n\beta_p K_p\beta_r K_r(S^jS^{*j})$ and $\beta_{pr}K_{pr}(T)=S^n\beta_{pr}K_{pr}(S^jS^{*j})$. Thus 
\begin{align}
\eqref{commuteenough}
&\Longleftrightarrow\big\langle q_{pr}\big(\beta_p K_p\beta_r K_r(T)-\beta_{pr}K_{pr}(T)\big),q_{pr}\big(\beta_p K_p\beta_r K_r(T)-\beta_{pr}K_{pr}(T)\big)\big\rangle_{K_{pr}}=0\notag\\
&\Longleftrightarrow 
K_{pr}\big(\big(\beta_p K_p\beta_r K_r(T)-\beta_{pr}K_{pr}(T)\big)^*\big(\beta_p K_p\beta_r K_r(T)-\beta_{pr}K_{pr}(T)\big)\big)=0\notag\\
&\Longleftrightarrow 
K_{pr}(R^*S^{*n}S^nR)=0\label{changedstep}\\
&\Longleftrightarrow 
K_{pr}(R^*R)=0,\notag
\end{align}
where $R:=\beta_p K_p\beta_r K_r(S^jS^{*j})-\beta_{pr}K_{pr}(S^jS^{*j})$.
The formula \eqref{charK} implies that both $\beta_p K_p\beta_r K_r(S^jS^{*j})$ and $\beta_{pr}K_{pr}(S^jS^{*j})$ have the form $S^iS^{*i}$, so $R$ is the difference of two projections, one of which dominates the other. Thus there is a projection $P$ such that $R=\pm P$, and 
\begin{align}
\eqref{commuteenough}&\Longleftrightarrow K_{pr}(R^*R)=0\Longleftrightarrow K_{pr}(P)=0\Longleftrightarrow K_{pr}(R)=0\notag\\
&\Longleftrightarrow K_{pr}(\beta_p K_p\beta_r K_r(S^jS^{*j}))=K_{pr}(\beta_{pr}K_{pr}(S^jS^{*j})).\label{target}
\end{align}
Since each   $K_a$ is unital, the transfer-operator identity \eqref{eq-transferopid} implies that  $K_a\beta_a$ is the identity map. So
\begin{align*}
K_{pr}\beta_p K_p\beta_r K_r&=(K_rK_p)\beta_p K_p\beta_r K_r=K_r(K_p\beta_p) K_p\beta_r K_r=K_rK_p\beta_r K_r\\
&=K_pK_r\beta_r K_r=K_pK_r=K_{pr}=K_{pr}\beta_{pr}K_{pr},
\end{align*}
the right-hand side of \eqref{target} is true, and we have proved \eqref{commuteenough} for $T=S^nS^jS^{*j}$.

For $T=S^jS^{*j}S^{*n}=S^jS^{*j}S^{*prk}$, we proceed as above, except that at step \eqref{changedstep} we find
\[
\eqref{commuteenough}\Longleftrightarrow K_{pr}(S^{prk}R^*RS^{*prk})=0
\Longleftrightarrow S^kK_{pr}(R^*R)S^{*k}=0
\Longleftrightarrow K_{pr}(R^*R)=0,
\]
and the rest of the argument is the same. This gives (\ref{Nathansfix}).

For part (\ref{1stq_a}), we run the argument of the first paragraph with $R=S^jS^{*j}-\beta_aK_a(S^jS^{*j})$. For part (\ref{2ndq_a}), we write out the inner product of $q_a(S^{*m}-S^jS^{*j}S^{*m})$ with itself, getting $K_a(S^mS^{*m}-S^{m+j}S^{*(m+j)})$, and calculate this using the formula \eqref{charK} for $K_a$.
\end{proof}

The bimodules $M_{K_a}$ are free as right modules, just as the $M_{L_a}$ are.

\begin{prop}\label{Skon}
Let $a\in\nx$. Then $\{q_a(S^k):0\leq k<a\}$ is an orthonormal basis for $M_{K_a}$ as a right Hilbert $\TT$-module.
\end{prop}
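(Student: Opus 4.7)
The plan is to verify orthonormality by a direct computation with the formula \eqref{charK}, and then to show that the right $\TT$-submodule $N$ of $M_{K_a}$ generated by $\{q_a(S^k):0\le k<a\}$ contains the dense subset $q_a(\TT)$. Orthonormality will force $N$ to be closed in $M_{K_a}$, so $N$ must coincide with $M_{K_a}$.

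For orthonormality, compute $\langle q_a(S^k), q_a(S^l)\rangle_{K_a}=K_a(S^{*k}S^l)$. The diagonal case gives $K_a(1)=V_a^*V_a=1$. For $k\ne l$, by symmetry suppose $l>k$, so $S^{*k}S^l=S^{l-k}$; since $0<l-k<a$, formula \eqref{charK} applied with $n=l-k$ and $j=0$ gives $K_a(S^{l-k})=0$ because $a\nmid l-k$.

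For density of $N$, since $q_a$ is a contraction and $\TT=\clsp\{S^mS^{*n}\}$, it suffices to show that each $q_a(S^mS^{*n})$ belongs to $N$. The left action of $\TT$ on $M_{K_a}$ is ordinary left multiplication, so $q_a(S^mS^{*n})=S^m\cdot q_a(S^{*n})$, and the problem reduces to rewriting $q_a(S^{*n})$. Write $n=aq+r$ with $0\le r<a$. If $r=0$ then $S^{*n}=\beta_a(S^{*q})$, giving $q_a(S^{*n})=q_a(1)\cdot S^{*q}$. If $r>0$, then $n$ and $n+(a-r)=a(q+1)$ both lie in $(aq,a(q+1)]$, and Lemma~\ref{bloddylem}(\ref{2ndq_a}) gives
\[
q_a(S^{*n})=q_a(S^{a-r}S^{*(a-r)}S^{*n})=q_a(S^{a-r}\beta_a(S^{*(q+1)}))=q_a(S^{a-r})\cdot S^{*(q+1)}.
\]
In either case $q_a(S^{*n})=q_a(S^{k_0})\cdot c$ with $0\le k_0<a$ and $c\in\TT$. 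Writing $m+k_0=aq'+r'$ with $0\le r'<a$, one has $S^{m+k_0}=S^{r'}\beta_a(S^{q'})$, hence $S^m\cdot q_a(S^{k_0})=q_a(S^{m+k_0})=q_a(S^{r'})\cdot S^{q'}$, so $q_a(S^mS^{*n})=q_a(S^{r'})\cdot S^{q'}c\in N$.

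Finally, orthonormality makes the map $\Phi:\bigoplus_{k=0}^{a-1}\TT\to M_{K_a}$, $(b_k)\mapsto\sum_{k=0}^{a-1}q_a(S^k)\cdot b_k$, isometric with respect to the Hilbert $C^*$-module norm $\|(b_k)\|^2=\|\sum_k b_k^*b_k\|_{\TT}$, so $N=\range\Phi$ is closed. Together with the density established above, this forces $N=M_{K_a}$, and $\{q_a(S^k):0\le k<a\}$ is therefore an orthonormal basis. The main technical point is the rewriting of $q_a(S^{*n})$ in the case $a\nmid n$: Lemma~\ref{bloddylem}(\ref{2ndq_a}) supplies exactly the identity needed to replace $S^{*n}$ by an expression with $\beta_a(\cdot)$ on the right, a manipulation that is impossible inside $\TT$ itself and is the one substantive obstacle in the argument.
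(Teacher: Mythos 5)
Your proof is correct and follows essentially the same route as the paper's: orthonormality by computing $K_a(S^{*j}S^k)$ directly, and generation by rewriting the spanning elements $S^mS^{*n}$ of $\TT$ in the form $q_a(S^k)\cdot R$ by means of Lemma~\ref{bloddylem}. The only organisational difference is that you peel off the left factor $S^m$ first and then need only part~(\ref{2ndq_a}) of that lemma applied to $q_a(S^{*n})$, whereas the paper splits the spanning elements into two families and uses part~(\ref{1stq_a}) together with a separate operator computation; the substance is the same.
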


\begin{proof}
For $n\in\N$ and $0\le j,k<a$ we have 
\[
{\langle q_a(S^j),q_a(S^k)\rangle}_{K_a}e_n=V_a^*S^{*j}S^kV_ae_n=
\begin{cases}
e_n & \text{if $j=k$},\\
0 & \text{if $j\not=k$,}
\end{cases}
\]
because $a$ cannot divide $k-j$ unless $j=k$. Thus $\{q_a(S^k)\}$ is orthonormal.

To check that the $q_a(S^k)$ generate $M_{K_a}$, it suffices to see that each $q_a(S^nS^jS^{*j})$ and each $q_a(S^jS^{*j}S^{*n})$ can be written as $q_a(S^k\beta_a(R))=q_a(S^k)\cdot R$. The first is easy: we write $n=ai+k$ with $0\leq k<a$, and then Lemma~\ref{bloddylem}(\ref{1stq_a}) gives
\[
q_a(S^nS^jS^{*j})=q_a(S^n\beta_aK_a(S^jS^{*j}))=q_a(S^kS^{ai}\beta_aK_a(S^jS^{*j}))=q_a(S^k\beta_aK_a(S^{ai}S^jS^{*j})).
\]
For $q_a(S^jS^{*j}S^{*n})$, we choose $k$ to be the smallest element of $\N$ such that $a$ divides $n+k$. If $j\geq k$, then another application of Lemma~\ref{bloddylem}(\ref{1stq_a}) gives
\begin{align*}
q_a(S^jS^{*j}S^{*n})&=q_a(S^kS^{j-k}S^{*(j-k)}S^{*(k+n)})=q_a(S^kS^{j-k}S^{*(j-k)})\cdot S^{*a^{-1}(k+n)}\\
&=q_a(S^k\beta_aK_a(S^{j-k}S^{*(j-k)}))\cdot S^{*a^{-1}(k+n)}\\
&=q_a\big(S^k\beta_a(K_a(S^{j-k}S^{*(j-k)})S^{*a^{-1}(k+n)})\big).
\end{align*}
If $j<k$, then we observe first that, because $k$ is the \emph{smallest} element of $\N$ such that $a\mid (n+k)$, we have $S^jS^{*j}S^{*n}V_ae_i=S^kS^{*k}S^{*n}V_ae_i$ for all $i\in\N$; this implies that
\[
\langle q_a(T),q_a(S^jS^{*j}S^{*n})\rangle_{K_a}=\langle q_a(T),q_a(S^kS^{*k}S^{*n})\rangle_{K_a}\ \text{ for all $T\in \TT$,}
\]
and hence that 
\[
q_a(S^jS^{*j}S^{*n})=q_a(S^kS^{*k}S^{*n})=q_a(S^k\beta_a(S^{*a^{-1}(n+k)})).\qedhere
\]
\end{proof}

\begin{prop}\label{reconstruction}
The homomorphisms $\phi_a:\TT\to \LL(M_{K_a})$ are injective, and have range in the compact operators. Indeed, we have
\[
\phi_a(T)=\sum_{k=0}^{a-1}\Theta_{q_a(TS^k),q_a(S^k)}\ \text{ for all $T\in\TT$.}
\]
\end{prop}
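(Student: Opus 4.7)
The plan is to read off both claims from the orthonormal basis supplied by Proposition~\ref{Skon}, via the standard reconstruction formula.

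First I would establish the formula for $\phi_a(T)$. Since $\{q_a(S^k):0\leq k<a\}$ is an orthonormal basis of $M_{K_a}$ as a right $\TT$-module, the reconstruction identity gives
\[
1_{M_{K_a}}=\sum_{k=0}^{a-1}\Theta_{q_a(S^k),q_a(S^k)}\quad\text{in }\LL(M_{K_a}).
\]
Composing on the left with $\phi_a(T)$ and using the identity $R\,\Theta_{m,n}=\Theta_{Rm,n}$ for $R\in\LL(M_{K_a})$, together with $\phi_a(T)q_a(S^k)=q_a(TS^k)$ (the definition of the left action), yields
\[
\phi_a(T)=\sum_{k=0}^{a-1}\Theta_{q_a(TS^k),q_a(S^k)}.
\]
Each summand is a rank-one operator, so $\phi_a(T)\in\KK(M_{K_a})$.

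For injectivity, suppose $\phi_a(T)=0$. Evaluating at the basis vector $q_a(S^j)$ gives $q_a(TS^j)=0$ for each $0\leq j<a$, and hence
\[
0=\langle q_a(TS^j),q_a(TS^j)\rangle_{K_a}=K_a(S^{*j}T^*TS^j)=V_a^*S^{*j}T^*TS^jV_a=(TS^jV_a)^*(TS^jV_a),
\]
so $TS^jV_a=0$ for each such $j$. But $S^jV_ae_n=e_{an+j}$, and as $j$ runs over $\{0,1,\dots,a-1\}$ and $n$ runs over $\N$, the integer $an+j$ runs over all of $\N$; thus $T$ annihilates the orthonormal basis $\{e_m:m\in\N\}$ of $\ell^2(\N)$, so $T=0$.

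There is no real obstacle here: the only slightly delicate point is checking that $\{S^jV_ae_n:0\leq j<a,\ n\in\N\}$ exhausts the standard basis of $\ell^2(\N)$, which is just the division algorithm and is exactly the same calculation that makes the $q_a(S^k)$ span in Proposition~\ref{Skon}.
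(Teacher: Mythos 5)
Your proposal is correct. The derivation of the formula $\phi_a(T)=\sum_{k=0}^{a-1}\Theta_{q_a(TS^k),q_a(S^k)}$ and the resulting compactness is exactly the paper's argument: apply $\phi_a(T)$ to the reconstruction identity $1=\sum_k\Theta_{q_a(S^k),q_a(S^k)}$ coming from the orthonormal basis of Proposition~\ref{Skon}. Where you diverge is in the injectivity claim. The paper does not argue directly from the basis; instead it proves that $K_a$ is \emph{almost faithful} in the sense of Brownlowe--Raeburn (if $K_a((XY)^*(XY))=0$ for all $Y\in\TT$ then, testing with $Y=S^{*(a-1)n}$, one finds $(Xe_n\mid Xe_n)=0$ for every $n$, so $X=0$), and then invokes \cite[Theorem~4.2]{br} to conclude that $\phi_a$ is injective. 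Your route is more elementary and self-contained: from $\phi_a(T)=0$ you get $q_a(TS^j)=0$, hence $TS^jV_a=0$, and since $S^jV_ae_n=e_{an+j}$ exhausts the standard basis of $l^2(\N)$ by the division algorithm, $T=0$. Both arguments bottom out in essentially the same computation on basis vectors of $l^2(\N)$; the paper's version buys a reusable structural fact about $K_a$ (almost faithfulness) at the cost of an external citation, while yours avoids the citation entirely and makes transparent exactly which vectors detect $T$. All the individual steps you use --- $\phi_a(T)\Theta_{m,n}=\Theta_{\phi_a(T)m,n}$, $\langle q_a(x),q_a(x)\rangle_{K_a}=K_a(x^*x)=(xV_a)^*(xV_a)$, and the surjectivity of $(n,j)\mapsto an+j$ --- are sound.
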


\begin{proof}
We claim that $K_a$ is almost faithful in the sense of \cite{br}. To see this, suppose that $X\in \TT$ satisfies $K_a((XY)^*(XY))=0$ for all $Y\in\TT$. Let $n\in\N$ and  take $Y=S^{*(a-1)n}$.  Then 
\begin{align*}0&=(K_a((XY)^*(XY))e_n\, |\, e_n)=(XYV_ae_n\, |\, XYV_ae_n)\\
&=(XS^{*(a-1)n}e_{an}\, |\,XS^{*(a-1)n}e_{an}) =(Xe_n\,|\, Xe_n);
\end{align*}
since this is true for all $n$, we deduce that $X=0$, as required. Thus $K_a$ is almost faithful, and the argument of \cite[Theorem~4.2]{br} implies that $\phi_a$ is injective.

The formula for $\phi_a(T)$ follows from the reconstruction formula for the orthonormal basis of Proposition~\ref{Skon}.
\end{proof}

\begin{lemma}\label{ideal_mult} Let $J$ be the ideal of $\TT(\nxnx)$  generated by
\[
\Big\{1-\sum_{k=0}^{p-1} s^kv_p(s^kv_p)^*:p\in\PP\Big\}.
\]
 Let $a\in\N^\times$ and set $v_a:=\prod_{p\in\PP}v_p^{e_p(a)}$. Then
\begin{equation}\label{result}
1-\sum_{k=0}^{a-1}s^kv_a{(s^kv_a)}^*\in J.
\end{equation}
\end{lemma}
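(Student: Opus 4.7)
The plan is to prove the statement by induction on the number $\Omega(a)$ of prime factors of $a$, counted with multiplicity. The base cases are trivial: if $a=1$, then $v_a=1$ and the whole expression vanishes; if $a$ is prime, then the expression is literally a generator of $J$.

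For the inductive step, I would factor $a=pb$ with $p$ a prime dividing $a$ and $\Omega(b)<\Omega(a)$, so that $v_a = v_p v_b$ by definition and by (T2). The inductive hypothesis gives $1-\sum_{j=0}^{b-1} s^j v_b v_b^* s^{*j} \in J$. The core calculation is to partition $\{0,1,\dots,a-1\}$ via the unique decomposition $k = l+jp$ with $0\le l<p$, $0\le j<b$, and then rewrite
\[
\sum_{k=0}^{a-1} s^k v_a v_a^* s^{*k} = \sum_{l=0}^{p-1}\sum_{j=0}^{b-1} s^{l+jp} v_a v_a^* s^{*(l+jp)}.
\]
Using (T1) in the form $v_p s^j = s^{jp} v_p$ (which I would derive by iterating (T1)), and hence $s^{jp}v_p = v_p s^j$, I can rearrange $s^{l+jp} v_a = s^l v_p s^j v_b$. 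Substituting and pairing with the adjoint gives
\[
\sum_{j=0}^{b-1} s^{l+jp} v_a v_a^* s^{*(l+jp)} = s^l v_p \Big(\sum_{j=0}^{b-1} s^j v_b v_b^* s^{*j}\Big) v_p^* s^{*l}.
\]

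Now I would invoke the inductive hypothesis, which says the inner sum is congruent to $1$ modulo $J$; then summing over $l$ and applying the defining relation for $p$ gives $\sum_{k=0}^{a-1} s^k v_a v_a^* s^{*k} \equiv \sum_{l=0}^{p-1} s^l v_p v_p^* s^{*l} \equiv 1 \pmod{J}$, which is exactly \eqref{result}.

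The main (mild) obstacle is bookkeeping: making sure the factorization $s^{l+jp}v_a = s^l v_p s^j v_b$ is justified purely from the Toeplitz relations, which boils down to the commutation $s^{jp}v_p = v_p s^j$ coming from (T1). There is no genuine difficulty; once that identity is in hand the calculation is forced.
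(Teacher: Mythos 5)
Your proposal is correct and follows essentially the same route as the paper: induction on the number of prime factors with multiplicity, writing $a=pb$, reparametrising the index set $\{0,\dots,a-1\}$ as a product, and using the (T1)-derived commutation $v_c s^m = s^{mc}v_c$ to factor $\sum_k s^k v_a v_a^* s^{*k}$ into a double sum to which the induction hypothesis and the defining generator of $J$ apply. The only cosmetic difference is that you decompose $k=l+jp$ (putting $s^jv_b$ on the inside) where the paper uses $k=j+lb$ (putting $s^lv_p$ on the inside); both regroupings work.
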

\begin{proof}
The crucial observation is that (T1) implies that $v_bs=s^bv_b$ for all $b\in\nx$ (see the proof of \cite[Proposition~6.1]{lr}).  We will prove \eqref{result} by induction on the number $n$ of prime factors of $a$, counted with multiplicity.  If $a$ is prime, so that $n=1$, then \eqref{result} holds by definition of $J$.  Let $n\geq 2$, and assume that \eqref{result} holds for all $b\in\nx$ with less than $n$ prime factors. Consider $a=bp$ where $p\in\PP$ and $b$ has less than $n$ prime factors.
Note that
\begin{align*}
\sum_{k=0}^{bp-1} s^kv_{bp}(s^kv_{bp})^*
=\sum_{j=0}^{b-1}\sum_{l=0}^{p-1} s^{j+lb}v_{bp}(s^{j+lb}v_{bp})^*=
\sum_{j=0}^{b-1}\sum_{l=0}^{p-1} s^jv_bs^lv_p(s^jv_bs^lv_p)^*.
\end{align*}
Thus 
\begin{align*}
1-\sum_{k=0}^{bp-1}s^kv_{bp}(s^kv_{bp})^*
&=\sum_{j=0}^{b-1} s^jv_b\Big(1-\sum_{l=0}^{p-1}s^lv_p(s^lv_p)^*  \Big)(s^jv_b)^*+\Big(1-\sum_{j=0}^{b-1} s^jv_b(s^jv_b)^*  \Big)
\end{align*}
belongs to $J$ by the induction hypothesis.
\end{proof}

\begin{thm}\label{MKthm}
Let $(\TT,\nx,\beta,K)$ be the Exel system described in Lemma~\ref{defEsonT}. Then there are isomorphisms
\begin{enumerate}
\item $\theta_1$ of $\TT(\nxnx)$ onto $\NT(M_K)$ such that $\theta_1(s)=i_{\TT}(S)$ and $\theta_1(v_p)=i_{\nx}(p)$ for $p$ prime, and

\smallskip
\item\label{item2-MKthm} $\theta_2$ of $\TT_{\mult}(\nxnx)$ onto $\TT\rtimes_{\beta,K}\nx=\OO(M_K)$ such that $\theta_2(s)=j_{\TT}(S)$ and $\theta_2(v_p)=j_{\nx}(p)$ for $p$ prime.
\end{enumerate}
\end{thm}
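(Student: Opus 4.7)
The plan is to closely mirror the proof of Theorem~\ref{MLthm}, using the bimodule structure of $M_K$ established in Lemma~\ref{bloddylem} and Propositions~\ref{Skon}--\ref{reconstruction}. I first define $\widetilde S := i_\TT(S)$ and $\widetilde V_p := i_\nx(p) = i_{M,p}(q_p(1))$ for each prime $p$, and aim to verify that these satisfy the relations (T1)--(T5) from the presentation of $\TT(\nxnx)$. That $\widetilde S$ is an isometry is immediate from $S^*S = 1$, and $\widetilde V_p^*\widetilde V_p = i_\TT(\langle q_p(1),q_p(1)\rangle_{K_p}) = i_\TT(K_p(1)) = 1$ makes each $\widetilde V_p$ an isometry. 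Relations (T1) and (T2) follow from the bimodule operations together with $\beta_p(S) = S^p$, and (T5) reduces to the direct computation $K_p(S^k) = V_p^*S^kV_p = 0$ for $1 \le k < p$ (reflecting the orthogonality in Proposition~\ref{Skon}). For (T4), the computation $\widetilde S^{p-1}\widetilde V_p\widetilde S^* = i_{M,p}(q_p(S^{p-1}S^{*p}))$ reduces the relation to the equality $q_p(S^*) = q_p(S^{p-1}S^{*p})$, which is precisely Lemma~\ref{bloddylem}(\ref{2ndq_a}) applied with $a=p$, $m=1$, $j=p-1$.

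The verification of (T3) is the crux of this step. Following the argument of Theorem~\ref{MLthm}, Nica covariance of $i_{M_K}$ expresses $\widetilde V_p\widetilde V_p^*\widetilde V_r\widetilde V_r^*$ as $i_{M_K}^{(pr)}\bigl(\iota_p^{pr}(\Theta_{q_p(1),q_p(1)})\iota_r^{pr}(\Theta_{q_r(1),q_r(1)})\bigr)$, and a parallel calculation to \eqref{stickingpt} identifies the bracketed operator with $\Theta_{q_{pr}(1),q_{pr}(1)}$. The step that required the identity $L_p\alpha_r = \alpha_r L_p$ in the proof of Theorem~\ref{MLthm} is now replaced by the equality $q_{pr}(\beta_p K_p\beta_r K_r(T)) = q_{pr}(\beta_{pr}K_{pr}(T))$ supplied by Lemma~\ref{bloddylem}(\ref{Nathansfix}). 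Cancelling $\widetilde V_p$ and $\widetilde V_r$ gives (T3), and the universal property \cite[Theorem~4.1]{lr} then yields a unital homomorphism $\theta_1:\TT(\nxnx)\to\NT(M_K)$ with $\theta_1(s) = \widetilde S$ and $\theta_1(v_p) = \widetilde V_p$.

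For faithfulness I apply Theorem~\ref{uniqueness for T(nxnx)} via the Fock representation $l_*:\NT(M_K)\to\LL(\FF(M_K))$, exactly as in Theorem~\ref{MLthm}. Since $l_{M,p}(q_p(1))^*$ vanishes on $M_{K_1} = \TT$, the operator $l_*(1 - \widetilde S^k\widetilde V_p\widetilde V_p^*\widetilde S^{*k})$ restricts to the identity on $M_{K_1}$, while $l_*(1-\widetilde S\widetilde S^*)$ acts on $M_{K_1}$ as left multiplication by $1-SS^*$. Evaluating the full product on the vector $1\in \TT = M_{K_1}$ produces $1-SS^* \ne 0$, verifying~\eqref{eq for lr1 cor} and hence the faithfulness of $\theta_1$. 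Surjectivity is immediate: $i_\TT(\TT)\subset\theta_1(\TT(\nxnx))$ because $\TT = C^*(S)$, and every generator $i_{M,a}(q_a(T)) = i_\TT(T)\prod_p\widetilde V_p^{e_p(a)}$ belongs to the image (via $q_a(T) = T\cdot q_a(1)$ and $q_a(1)=\prod_p q_p(1)^{e_p(a)}$, the latter using that $\beta_p$ is unital). This establishes part~(a).

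For part~(b) I consider the composition $Q\circ\theta_1:\TT(\nxnx)\to\OO(M_K)$, where $Q$ is the Cuntz-Pimsner quotient map. By Proposition~\ref{reconstruction} with $T=1$ and $a=p$, Cuntz-Pimsner covariance forces $1 = \sum_{k=0}^{p-1} j_{M,p}(q_p(S^k))j_{M,p}(q_p(S^k))^*$ in $\OO(M_K)$, which corresponds under $\theta_1$ to $1 = \sum_{k=0}^{p-1} s^kv_p(s^kv_p)^*$. Hence $Q\circ\theta_1$ annihilates the defining generators of $J$ and descends to a surjection $\theta_2:\TT_{\mult}(\nxnx)\to\OO(M_K)$. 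To prove $\theta_2$ is injective I must show that $\theta_1(J)$ is exactly $\ker Q$. Since $\phi_a(T) = \phi_a(T)\phi_a(1)$, we have $i^{(a)}(\phi_a(T)) = i_\TT(T)\,i^{(a)}(\phi_a(1))$, so every Cuntz-Pimsner relation reduces to the $T=1$ case, and $\ker Q$ is generated by $\{1 - i^{(a)}(\phi_a(1)) : a\in\nx\}$; these pull back under $\theta_1$ to $\{1 - \sum_{k=0}^{a-1}s^kv_a(s^kv_a)^* : a\in\nx\}$. By Lemma~\ref{ideal_mult} all of these elements lie in $J$, and since the subset of them with $a$ prime generates $J$, we obtain $\theta_1^{-1}(\ker Q) = J$ as required. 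I expect the main obstacle to be this last ideal identification: the Cuntz-Pimsner relations naturally produce one relation per $a\in\nx$, while $J$ is defined using only primes, and Lemma~\ref{ideal_mult} is precisely the bridge needed to match the two.
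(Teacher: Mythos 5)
Your proposal is correct and takes essentially the same route as the paper's own proof: it mirrors Theorem~\ref{MLthm}, uses parts (\ref{2ndq_a}) and (\ref{Nathansfix}) of Lemma~\ref{bloddylem} for (T4) and (T3), the Fock representation for injectivity of $\theta_1$, and Proposition~\ref{reconstruction} together with Lemma~\ref{ideal_mult} to show $\ker(Q\circ\theta_1)=J$. The only difference is a welcome extra detail: you explicitly evaluate the product in \eqref{eq for lr1 cor} on the Fock vector $1\in M_{K_1}$ to handle the additional factor $1-SS^*$, a point the paper passes over with ``as in the proof of Theorem~\ref{MLthm}.''
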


\begin{proof}
Since $i_{\TT}$ is unital,  $T:=i_{\TT}(S)$ and $W_p:=i_{M,p}(q_p(1))$ are isometries. Calculations like those in the second paragraph of the proof of Theorem~\ref{MLthm} show that $T$ and the $W_p$ satisfy (T1), (T2) and (T5). Lemma~\ref{bloddylem}(\ref{2ndq_a}) implies that $q_p(S^*)=q_p(S^{p-1}S^{*p})$, and hence
\begin{align*}
T^*W_p&=i_{\TT}(S)^*i_{M,p}(q_p(1))=i_{M,p}(q_p(S^*))=i_{M,p}(q_p(S^{p-1}S^{*p}))\\
&=i_{M,p}(q_p(S^{p-1}\beta_p(S^*)))=i_{M,p}(q_p(S^{p-1})\cdot S^*)=
i_{M,p}(q_p(S^{p-1}))i_{\TT}(S^*)\\
&=i_{M,p}(S^{p-1}\cdot q_p(1))i_{\TT}(S^*)=i_{\TT}(S^{p-1})i_{M,p}(q_p(1))i_{\TT}(S^*)=T^{p-1}W_pT^*,
\end{align*}
which is (T4). The pair $(\beta, K)$ does not satisfy the analogue of the relation $L_p\alpha_r=\alpha_rL_p$ used at \eqref{stickingpt}, but part~(\ref{Nathansfix}) of Lemma~\ref{bloddylem} is exactly what we need to jump from \eqref{beforestick} to \eqref{stickingpt}. So the proof of (T3) also carries over.

Theorem~4.1 of \cite{lr} now gives a homomorphism $\theta_1:=\pi_{T,W}:\TT(\nxnx)\to \NT(M_K)$ which does the correct things on generators. As in the proof of Theorem~\ref{MLthm}, we can see using the Fock representation of $M_K$ that the elements $\{T,W_p\}$ satisfy \eqref{eq for lr1 cor}, and hence Theorem~\ref{uniqueness for T(nxnx)} implies that $\theta_1$ is injective. The argument in the second-last paragraph of the proof of Theorem~\ref{MLthm} shows that $\theta_1$ is surjective.

For~(\ref{item2-MKthm}), let $Q:\NT(M_K)\to \TT\rtimes_{\beta, K}\nx$ be the quotient map, and recall that $\ker Q$ is generated by
$\{i_\TT(T)-i_{M_K}^{(a)}(\phi_a(T)):a\in\nx, T\in\TT(\nxnx)\}$.  Let  $J$ be the ideal of $\TT(\nxnx)$ in Lemma~\ref{ideal_mult}, so that   $\TT(\nxnx)/J=\TT_\mult(\nxnx)$.
Replacing the basis $\{q_p(\iota^k):0\leq k<p\}$ of $M_{L_p}$ with the basis $\{q_p(S^k):0\leq k<p\}$ of $M_{K_p}$ in the calculation \eqref{cal-quotient} shows that $1=\sum_{k=0}^{p-1} Q\circ\theta_1(s^kv_p(s^kv_p)^*)$.  Thus $J\subset \ker(Q\circ\theta_1)$, and $Q\circ\theta_1$ factors through a surjection $\theta_2$ of $\TT_{\mult}(\nxnx)$ onto $\TT\rtimes_{\beta,K}\nx$ satisfying $\theta_2(s)=j_{\TT}(S)$ and $\theta_2(v_p)=j_{\nx}(p)$ as required.
To see that $\theta_2$ is injective, we need to see that $J= \ker(Q\circ\theta_1)$.    Using Proposition~\ref{reconstruction} we have
\begin{align*}
i_{M_K}^{(a)}(\phi_a(T))&=i_{M_K}^{(a)}\Big(\sum_{k=0}^{a-1} \Theta_{q_a(TS^k), q_a(S^k)}\Big)
=\sum_{k=0}^{a-1}  i_{M,a}(q_a(TS^k))i_{M,a}(q_a(S^k))^*\\
&=i_\TT(T)\sum_{k=0}^{a-1} i_\TT(S^k)i_{M,a}(q_a(1))\big(i_\TT(S^k)i_{M,a}(q_a(1))  \big)^*.
\end{align*}
Now
\begin{align*}
i_\TT(T)-i_{M_K}^{(a)}(\phi_a(T))&=i_\TT(T)\Big(1- \sum_{k=0}^{a-1} i_\TT(S^k)i_{M,a}(q_a(1))\big(i_\TT(S^k)i_{M,a}(q_a(1))  \big)^*  \Big)
\\
&=i_\TT(T)\theta_1\Big(1-\sum_{k=0}^{a-1}s^kv_a(s^kv_a)^*\Big)
\end{align*}
belongs to $\theta_1(J)$ by Lemma~\ref{ideal_mult}.  Thus $\ker (Q\circ\theta_1)\subset J$, and $\theta_2$ is injective.
\end{proof}

\section{Compatibility of our isomorphisms}\label{secCompatibility}

Our final Theorem~\ref{sumup} says that all our algebras and maps fit into a commutative cube. At this stage we are missing two of the maps.
However, since $C(\T)$ is a quotient of $\TT=\TT(\N)$, it is reasonable to guess that there are natural ``quotient maps'' from $\NT(M_K)$ to $\NT(M_L)$ and $\OO(M_K)$ to $\OO(M_L)$.  The next result describes the data we need to build such homomorphisms: for individual Hilbert bimodules, we need a triple of homomorphisms satisfying the axioms described in \cite[Definition~1.16]{enchilada}; for a product system, we need one of these triples for each fibre. Proposition~\ref{Ofunctorial} and the following Proposition~\ref{Exelcpfunct} can be viewed as partial functoriality results for the various constructions discussed in the last two sections.

\begin{prop}\label{Ofunctorial}
Suppose that $(G,P)$ is a quasi-lattice ordered group, that $A$ and $B$ are $C^*$-algebras, that $M$ is a compactly aligned product system of $A$--$A$ bimodules over $P$, and that $N$ is a compactly aligned product system of $B$--$B$ bimodules over $P$. Suppose that $\rho:A\to B$ is a homomorphism, and that for each $x\in P$ we have a linear map $\pi_x:M_x\to N_x$ such that $(\rho,\pi_x,\rho)$ is a morphism of right-Hilbert bimodules, such that $\pi_x(m)\pi_y(n)=\pi_{xy}(mn)$, and such that $\pi_x(M_x)$ generates $N_x$ as a right Hilbert $B$-module. Then there are homomorphisms
\begin{enumerate}\label{dummy}
\item $\pi_{\NT}:\NT(M)\to \NT(N)$ such that $\pi_{\NT}\circ i_A=i_B\circ\rho$ and $\pi_{\NT}\circ i_{M,x}=i_{N,x}\circ\pi_x$ for $x\in P$, and\smallskip

\smallskip
\item\label{Qfunc2} $\pi_{\OO}:\OO(M)\to \OO(N)$ such that $\pi_{\OO}\circ j_A=j_B\circ\rho$ and $\pi_{\OO}\circ j_{M,x}=j_{N,x}\circ\pi_{x}$ for $x\in P$.
\end{enumerate}
\end{prop}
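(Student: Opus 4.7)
The plan is to invoke the universal properties of $\NT(M)$ and $\OO(M)$ for (a) and (b) respectively. For (a), I will show that $(\{i_{N,x}\circ\pi_x\},i_B\circ\rho)$ is a Nica-covariant representation of $M$ in $\NT(N)$; the universal property of $\NT(M)$ then produces $\pi_{\NT}$ with the desired properties. For (b), I will show analogously that $(\{j_{N,x}\circ\pi_x\},j_B\circ\rho)$ is a Cuntz-Pimsner covariant representation of $M$ in $\OO(N)$, and appeal to the universal property of $\OO(M)$.

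The central technical device, used throughout, is a $*$-homomorphism $\pi_x^{(1)}\colon\KK(M_x)\to\KK(N_x)$ characterised by $\pi_x^{(1)}(\Theta_{m,n})=\Theta_{\pi_x(m),\pi_x(n)}$. I would construct it first. From the bimodule-morphism identity $\rho(\langle n,p\rangle_A)=\langle\pi_x(n),\pi_x(p)\rangle_B$ one obtains $\Theta_{\pi_x(m),\pi_x(n)}(\pi_x(p)\cdot b)=\pi_x(\Theta_{m,n}(p))\cdot b$, and since $\pi_x(M_x)\cdot B$ is dense in $N_x$ by hypothesis, the assignment $\sum_i\Theta_{m_i,n_i}\mapsto\sum_i\Theta_{\pi_x(m_i),\pi_x(n_i)}$ is forced to be well-defined; the same formula shows it is a $*$-homomorphism on the dense $*$-subalgebra of finite-rank operators, so it extends contractively to $\KK(M_x)$. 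The immediate consequence, obtained by applying the defining formulas, is the factorisation $(i_N\circ\pi)^{(x)}=i_N^{(x)}\circ\pi_x^{(1)}$ (and similarly with $j$ in place of $i$).

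With $\pi_x^{(1)}$ in hand, checking that $(\{i_{N,x}\circ\pi_x\},i_B\circ\rho)$ is a representation of $M$ is a direct consequence of the multiplicativity $\pi_x(m)\pi_y(n)=\pi_{xy}(mn)$ and the bimodule-morphism property. For Nica covariance I would verify the compatibility $\pi_{x\vee y}^{(1)}\circ\iota_x^{x\vee y}=\iota_x^{x\vee y}\circ\pi_x^{(1)}$ on $\KK(M_x)$; writing $x\vee y=xy'$ and identifying $M_{x\vee y}\cong M_x\otimes_A M_{y'}$, this reduces by continuity to a calculation on rank-one operators using $\pi_x(m)\pi_{y'}(k)=\pi_{x\vee y}(mk)$. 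Nica covariance of $(\{i_{N,x}\circ\pi_x\},i_B\circ\rho)$ then follows by plugging $\pi_x^{(1)}(R)$ and $\pi_y^{(1)}(T)$ into the Nica covariance identity satisfied by $i_N$. For part (b), one further identity suffices: $\pi_x^{(1)}(\phi_x^M(a))=\phi_x^N(\rho(a))$ in $\KK(N_x)$, which is checked on elements $\pi_x(m)\cdot b$ using $\pi_x(a\cdot m)=\rho(a)\cdot\pi_x(m)$. Cuntz-Pimsner covariance of $(j_N,j_B)$ then yields Cuntz-Pimsner covariance of the composed representation.

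The step I expect to be the main obstacle is the construction and $\iota$-compatibility of $\pi_x^{(1)}$. Well-definedness rests on the density hypothesis that $\pi_x(M_x)$ generates $N_x$ as a right Hilbert $B$-module; without it, the vanishing of $\sum_i\Theta_{m_i,n_i}$ on $M_x$ would not force the vanishing of $\sum_i\Theta_{\pi_x(m_i),\pi_x(n_i)}$ on $N_x$. The $\iota$-compatibility is the other delicate point because $\iota_x^{x\vee y}(\Theta_{m,n})$ is in general not itself a rank-one operator on $M_{x\vee y}$, so the verification must be pushed through the identification with $M_x\otimes_A M_{y'}$ and extended by continuity; after this, the remainder of the argument is a diagram chase through the two universal properties.
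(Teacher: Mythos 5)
Your proposal is correct and follows essentially the same route as the paper: the paper's proof likewise reduces everything to the induced homomorphism $\mu_x:\KK(M_x)\to\KK(N_x)$ (your $\pi_x^{(1)}$), the factorisation $(i_N\circ\pi)^{(x)}=i_N^{(x)}\circ\mu_x$, the compatibility $\iota_x^{x\vee y}\circ\mu_x=\mu_{x\vee y}\circ\iota_x^{x\vee y}$ verified on the generating elements $\pi_x(m)\pi_{x^{-1}(x\vee y)}(n)=\pi_{x\vee y}(mn)$, and the identity $\mu_x(\phi^M_x(a))=\phi^N_x(\rho(a))$ for Cuntz--Pimsner covariance. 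The only (cosmetic) difference is that the paper cites \cite[Remark~1.19]{enchilada} for the existence and characterising properties of $\mu_x$, whereas you construct it directly from the density hypothesis.
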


\begin{proof}
Using that $(\rho,\pi_x,\rho)$ is a homomorphism of Hilbert bimodules and $\pi_x(m)\pi_y(n)=\pi_{xy}(mn)$, one can check that the $i_{N,x}\circ\pi_x$ form a representation $i_N\circ \pi$ of the product system $M$. We claim that this representation is Nica covariant. For this, we recall from \cite[Remark~1.19]{enchilada} that the $\pi_x$ induce homomorphisms $\mu_x:\KK(M_x)\to \KK(N_x)$ satisfying $\mu_x(\Theta_{m,n})=\Theta_{\pi_x(m),\pi_x(n)}$ and $\mu_x(R)(\pi_x(m))=\pi_x(Rm)$. (Indeed, since the range of $\pi_x$ generates $N_x$, this last equation completely determines $\mu_x(R)$.) Now we can check on rank-one operators that $(i_{N}\circ\pi)^{(x)}=i_{N}^{(x)}\circ\mu_x$. This will allow us to compute the left-hand side of the Nica-covariance relation \eqref{defNicacov}. 

For the right-hand side of \eqref{defNicacov}, we also need to handle things like $\iota_x^{x\vee y}(\mu_x(R))$. Since the ranges of the $\pi_x$ generate $N_x$, $\iota_x^{x\vee y}(\mu_x(R))$ is determined by its values on elements of the form $\pi_x(m)\pi_{x^{-1}(x\vee y)}(n)=\pi_{x\vee y}(mn)$. Then
\begin{align*}
\iota_x^{x\vee y}(\mu_x(R))\big(\pi_x(m)\pi_{x^{-1}(x\vee y)}(n)\big)
&=\big(\mu_x(R)\pi_x(m)\big)\pi_{x^{-1}(x\vee y)}(n)\\
&=\pi_x(Rm)\pi_{x^{-1}(x\vee y)}(n)\\
&=\pi_{x\vee y}((Rm)n)\\
&=\pi_{x\vee y}(\iota_x^{x\vee y}(R)(mn))\\
&=(\mu_{x\vee y}\circ \iota_x^{x\vee y}(R))(\pi_{x\vee y}(mn))\\
&=(\mu_{x\vee y}\circ \iota_x^{x\vee y}(R))\big(\pi_x(m)\pi_{x^{-1}(x\vee y)}(n)\big)
\end{align*}
so $\iota_x^{x\vee y}\circ \mu_x=\mu_{x\vee y}\circ \iota_x^{x\vee y}$. Thus for $R\in \KK(M_x)$ and $T\in \KK(M_y)$ we have
\begin{align*}
(i_N\circ \pi)^{(x)}(R)(i_N\circ \pi)^{(y)}(T)&=i_N^{(x)}(\mu_x(R))i_N^{(y)}(\mu_y(T))\\
&=i_N^{(x\vee y)}\big(\iota_x^{x\vee y}(\mu_x(R))\iota_y^{x\vee y}(\mu_y(T))\big)\\
&=i_N^{(x\vee y)}\circ\mu_{x\vee y}\big(\iota_x^{x\vee y}(R)\iota_y^{x\vee y}(T)\big)\\
&=(i_N\circ\pi)^{(x\vee y)}\big(\iota_x^{x\vee y}(R)\iota_y^{x\vee y}(T)\big),
\end{align*}
which is Nica covariance of $i_N\circ\pi$. Now the universal property of $(\NT(M),i_M)$ gives a homomorphism $\pi_{\NT}$ with the required properties. 

For  (\ref{Qfunc2}), we consider the universal representation $(j_N, j_B)$ in $\OO(N)$.  As in the first paragraph of the proof of part  (\ref{Qfunc2}), $(j_N\circ \pi_x, j_N\circ \rho)$ is a  representation of $M_x$ in $\OO(N)$; we claim this representation is Cuntz-Pimsner covariant. Suppose that $\phi^M_x(a)\in\KK(M_x)$.  With $\mu_x$ as above, 
\[
\mu_x(\phi_x^M(a))(\pi_x(m))
=\pi_x(\phi_x^M(a)(m))=\pi_x(a\cdot m)=\rho(a)\cdot \pi_x(m)=\phi_x^N(\rho(a))(\pi_x(m)).
\]
This implies first, that $\phi_x^N(\rho(a))$ is compact, and second, that
\[
j_B(\rho(a))=j_N^{(x)}(\phi_x^N(\rho(a)))=j_N^{(x)}(\mu_x(\phi_x^M(a)))=(j_N\circ\pi)^{(x)}(\phi^M_x(a)).
\]
Thus $(j_N\circ \pi_x, j_N\circ \rho)$ is Cuntz-Pimsner covariant as claimed, and induces a homomorphism $\pi_\OO:\OO(M)\to \OO(N)$ with the required properties.
\end{proof}

We now apply Proposition~\ref{Ofunctorial} to the product systems arising from Exel systems. The odd-looking hypothesis on $\rho(A)\alpha_x(B)$ in Proposition~\ref{Exelcpfunct} is there to ensure that the range of the maps $\pi_x$ generate; we do not know whether it is necessary, but it is trivially satisfied in our application. 

\begin{prop}\label{Exelcpfunct}
Suppose that $(A,P,\beta,K)$ and $(B,P,\alpha,L)$ are Exel systems such that the associated product systems $M_K$ and $M_L$ are compactly aligned. Suppose that $\rho:A\to B$ is a unital homomorphism such that $\rho\circ\beta_x=\alpha_x\circ \rho$ and $\rho\circ K_x=L_x\circ \rho$, and such that products of the form $\rho(a)\alpha_x(b)$ span a dense subspace of $B$ for $x\in P$. Then there are homomorphisms
\begin{enumerate}
\item $\rho_{\NT}:\NT(A,P,\beta,K)\to \NT(B,P,\alpha,L)$ such that $\rho_{\NT}\circ i_A=i_B\circ\rho$ and $\rho_{\NT}(i_P(x))=i_{P}(x)$ for $x\in P$, and
\smallskip

\item $\rho\rtimes \id:A\rtimes_{\beta,K}P\to B\rtimes_{\alpha,L}P$ such that $(\rho\rtimes \id)\circ j_A=j_B\circ\rho$ and $(\rho\rtimes \id)(j_P(x))=j_{P}(x)$ for $x\in P$.
\end{enumerate}
\end{prop}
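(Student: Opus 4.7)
The plan is to reduce Proposition~\ref{Exelcpfunct} to Proposition~\ref{Ofunctorial} by constructing, for each $x\in P$, a bimodule morphism $\pi_x:M_{K_x}\to M_{L_x}$ built from $\rho$, and then invoking the two conclusions of that earlier result verbatim. The natural candidate is $\pi_x(q^K_x(a)):=q^L_x(\rho(a))$, where $q^K_x$ and $q^L_x$ are the canonical maps from $A$ and $B$ into the respective fibres.

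First I would check that $\pi_x$ is well defined and bounded. The key identity is that $\rho\circ K_x=L_x\circ\rho$ together with the fact that $\rho$ is a $*$-homomorphism gives
\[
{\langle q^L_x(\rho(a)),q^L_x(\rho(b))\rangle}_{L_x}=L_x(\rho(a^*b))=\rho(K_x(a^*b))=\rho({\langle q^K_x(a),q^K_x(b)\rangle}_{K_x}),
\]
so the inner product is intertwined by $\rho$. This shows simultaneously that $\pi_x$ passes to the Hausdorff completion (zero vectors map to zero vectors) and that $(\rho,\pi_x,\rho)$ is an inner-product preserving map. Compatibility with the right action uses $\rho\circ\beta_x=\alpha_x\circ\rho$:
\[
\pi_x(q^K_x(a)\cdot b)=q^L_x(\rho(a\beta_x(b)))=q^L_x(\rho(a)\alpha_x(\rho(b)))=\pi_x(q^K_x(a))\cdot\rho(b),
\]
and compatibility with the left action is immediate from $\rho$ being a homomorphism. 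Thus each $(\rho,\pi_x,\rho)$ is a right-Hilbert bimodule morphism.

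Next I would verify that the $\pi_x$ respect the product-system structure: given the definition of multiplication in $M_K$ and $M_L$, one computes
\[
\pi_x(q^K_x(a))\pi_y(q^K_y(b))=q^L_{xy}(\rho(a)\alpha_x(\rho(b)))=q^L_{xy}(\rho(a\beta_x(b)))=\pi_{xy}(q^K_x(a)q^K_y(b)).
\]
The density hypothesis on products $\rho(a)\alpha_x(b)$ gives the generating condition: elements of the form $\pi_x(q^K_x(a))\cdot b=q^L_x(\rho(a)\alpha_x(b))$ are dense in $\{q^L_x(c):c\in B\}$, which is dense in $M_{L_x}$, so $\pi_x(M_{K_x})$ generates $M_{L_x}$ as a right Hilbert $B$-module. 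At this stage all hypotheses of Proposition~\ref{Ofunctorial} are in place (compact alignment of both product systems is part of our standing assumption), and it furnishes homomorphisms $\pi_{\NT}:\NT(M_K)\to\NT(M_L)$ and $\pi_{\OO}:\OO(M_K)\to\OO(M_L)$. Setting $\rho_{\NT}:=\pi_{\NT}$ and $\rho\rtimes\id:=\pi_{\OO}$, the required identities on $i_A$ and $j_A$ are built into Proposition~\ref{Ofunctorial}, and the identities on $i_P(x)$ and $j_P(x)$ follow from $\rho$ being unital: $\pi_x(q^K_x(1))=q^L_x(\rho(1))=q^L_x(1)$, so $\pi_{\NT}(i_P(x))=\pi_{\NT}(i_{M,x}(q^K_x(1)))=i_{N,x}(q^L_x(1))=i_P(x)$, and similarly on the Cuntz--Pimsner side.

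I expect no serious obstacle: the only conceptual point is the first one, that $\rho\circ K_x=L_x\circ\rho$ forces the inner products to intertwine, which is exactly what is needed both for $\pi_x$ to descend to the completion and for the bimodule compatibility to hold. Everything else is bookkeeping, and the generating condition is precisely what the density assumption on $\rho(A)\alpha_x(B)$ is designed to supply.
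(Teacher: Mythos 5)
Your proposal is correct and follows essentially the same route as the paper: define $\pi_x(q_x(a))=q_x(\rho(a))$, use $\rho\circ K_x=L_x\circ\rho$ to see these are well defined and that $(\rho,\pi_x,\rho)$ are bimodule morphisms, use the density hypothesis for the generating condition, and then apply Proposition~\ref{Ofunctorial}, checking the action on generators via unitality of $\rho$. The only difference is cosmetic: you spell out the inner-product intertwining explicitly where the paper records only the resulting norm inequality.
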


\begin{proof}
Since $\rho\circ K=L\circ\rho$, we have $\|q_x(\rho(a))\|\leq \|q_x(a)\|$ for $a\in A$, and there are well-defined maps $\pi_x:M_{K_x}\to M_{L_x}$ such that $\pi_x(q_x(a))=q_x(\rho(a))$. Straightforward calculations show that the $(\rho,\pi_x,\rho)$ are morphisms of Hilbert bimodules, and the hypothesis on $\rho(A)$ implies that $\pi_x(M_{K_x})$ generates $M_{L_x}$. For $a,b\in A$ we have 
\begin{align*}
\pi_x(q_x(a))\pi_y(q_y(b))&=q_x(\rho(a))q_y(\rho(b))=q_{xy}(\rho(a)\alpha_x(\rho(b)))\\
&=q_{xy}(\rho(a\beta_x(b)))=\pi_{xy}(q_{xy}(a\beta_x(b)))\\
&=\pi_{xy}(q_x(a)q_y(b)).
\end{align*}
Thus the $\pi_x$ satisfy the hypotheses of Proposition~\ref{Ofunctorial}, and we get maps $\rho_{\NT}$ and $\rho\rtimes \id$ on $\NT(A,P,\beta,K):=\NT(M_K)$ and $A\rtimes_{\beta,K}P:=\OO(M_K)$.  More calculations show that, because $\rho$ is unital, these maps do the right thing on generators.
\end{proof}

\begin{example}\label{keyexrho}
We apply Proposition~\ref{Exelcpfunct} to the systems $(\TT,\nx,\beta,K)$ and $(C(\T),\nx,\alpha,L)$. We saw in Proposition~\ref{reconstruction} and  Example~\ref{ourex} that the left actions on $M_{K_a}$ and $M_{L_a}$ are by compact operators, and hence both are compactly aligned by \cite[Proposition~5.8]{f}.
Since the Toeplitz algebra $\TT$ is the universal $C^*$-algebra generated by an isometry, there is a homomorphism $\rho:\TT\to C(\T)$ such that $\rho(S)$ is the identity function $\iota:z\mapsto z$. We claim that, with $(\beta,K)$ as in \S\ref{secExelToeplitz} and $(\alpha,L)$ as in \S\ref{main result sec}, $\rho$ satisfies the hypotheses of Proposition~\ref{Exelcpfunct}. Since $\beta_a(S)=S^a$ and $\alpha_a(\iota)=\iota^a$, we have $\rho\circ\beta_a=\alpha_a\circ \rho$; since the range of $\rho$ is a $C^*$-subalgebra of $C(\T)$ containing the generator $\iota$, $\rho$ is surjective, and since $\alpha_a$ is unital every eleent of $C(\T)$ has the form $\rho(T)\alpha_a(1)$.

It remains to check that $\rho\circ K_a =L_a\circ \rho$. Since both sides of this equation are linear and $*$-preserving, it suffices to check it on elements of the form $S^nS^jS^{*j}$. Equation~\eqref{charK} implies that $\rho(K_a(S^nS^jS^{*j}))=\iota^{a^{-1}n}$ if $a\mid  n$ and $0$ otherwise. So we need to compute $L_a(\rho(S^nS^jS^{*j}))(z)=L_a(\iota^n)(z)$ for $z\in \T$. Choose an $a$th root $w_0$ of $z$. Then
\[
L_a(\iota^n)(z)=\frac{1}{a}\sum_{w^a=z}\iota^n(w)=\frac{1}{a}\sum_{w^a=z}w^{n}=\frac{w_0^{n}}{a}\sum_{l=0}^{a-1}e^{2\pi iln/a}.
\]
If $a$ does not divide $n$, then $e^{2\pi in/a}\not=1$, and the sum is zero; if $a\mid  n$, then $e^{2\pi iln/a}=1$ for all $l$, the sum equals $a$, and we get $L_a(\iota^n)(z)=w_0^n=z^{a^{-1}n}=\iota^{a^{-1}n}(z)$. 

Thus Proposition~\ref{Exelcpfunct} gives us homomorphisms $\rho_{\NT}$ of $\NT(M_K)$ onto $\NT(M_L)$ such that $\rho_{\NT}(i_\TT(S))=i_{C(\T)}(\iota)$ and $\rho_{\NT}(i_{\nx}(p))=i_{\nx}(p)$, and $\rho\rtimes\id:\TT\rtimes_{\beta,K}\nx\to C(\T)\rtimes_{\alpha,L}\nx$ such that $\rho\rtimes\id(j_{\TT}(S))=j_{C(\T)}(\iota)$ and $\rho\rtimes\id(j_{\nx}(p))=j_{\nx}(p)$.
\end{example}

\begin{thm}\label{sumup}
Let $\rho:\TT\to C(\T)$ be the homomorphism such that $\rho(S)=\iota$, and let $\rho_{\NT}$ and $\rho\rtimes\id$ be the homomorphisms found in Example~\ref{keyexrho}. Then the isomorphisms $\phi_1$, $\phi_2$ of Theorem~\ref{MLthm} and $\theta_1$, $\theta_2$ of Theorem~\ref{MKthm} fit into the following commutative diagram:
\begin{equation*}\label{YtoZ2}
\xymatrix@!0@R=40pt@C=80pt{
\TT(\nxnx)
\ar[rrr]^{\theta_1}
\ar[ddd]_{q_\add}
\ar[rd]^(.6){q_\mult}
&
&
&
\NT(M_K)
\ar'[d][ddd]^{\rho_{\NT}}
\ar[rd]^(.6){Q_K}
&
\\
&
\TT_\mult(\nxnx)
\ar[rrr]^{\theta_2}
\ar[ddd]
& 
& 
&
\TT\rtimes_{\beta, K}\nx
\ar[ddd]^{\rho\,\rtimes\,\id}
\\
&&&&\\
\TT_\add(\nxnx)
\ar'[r][rrr]^(.4){\phi_1}
\ar[rd]
&
&
&
\NT(M_L)
\ar[rd]_(.4){Q_L}
& 
\\
&
\QQ_\N
\ar[rrr]^{\phi_2}
& 
& 
&
C(\T)\rtimes_{\alpha, L}\nx
}
\end{equation*}
\end{thm}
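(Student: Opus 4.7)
My plan is to verify that each of the six faces of the cube commutes. Since every object at the source of an arrow is a $C^*$-algebra generated by $s$ (or the image of $s$ under the relevant quotient) together with $\{v_p:p\in\PP\}$, and every arrow in the cube is a $*$-homomorphism, this reduces to checking that the two compositions around each face agree on these generators.

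All twelve maps in the cube have already been described on generators: the four isomorphisms $\theta_1,\theta_2,\phi_1,\phi_2$ by Theorems~\ref{MLthm} and~\ref{MKthm}; the maps $q_\add$, $q_\mult$, together with the two unlabelled quotients $\TT_\add(\nxnx)\to\QQ_\N$ and $\TT_\mult(\nxnx)\to\QQ_\N$, are canonical quotient maps fixing $s$ and each $v_p$; the Cuntz-Pimsner quotients $Q_K$ and $Q_L$ of \S\ref{main result sec} satisfy $Q\circ i_{M,a}=j_{M,a}$ and $Q\circ i_A=j_A$; and $\rho_{\NT}$ and $\rho\rtimes\id$ were constructed in Example~\ref{keyexrho}. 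With this catalogue in hand, each face reduces to a one-line calculation. For example, on the back face I would compute
\[
\rho_{\NT}\circ\theta_1(s)=\rho_{\NT}(i_{\TT}(S))=i_{C(\T)}(\iota)=\phi_1(s)=\phi_1\circ q_\add(s),
\]
and similarly $\rho_{\NT}\circ\theta_1(v_p)=i_{\nx}(p)=\phi_1\circ q_\add(v_p)$.

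The remaining five faces follow the same template. On the top face both routes send $s\mapsto j_{\TT}(S)$ and $v_p\mapsto j_{\nx}(p)$; on the bottom face both send $s\mapsto j_{C(\T)}(\iota)$ and $v_p\mapsto j_{\nx}(p)$; on the right face both compositions take $i_{\TT}(S)\mapsto j_{C(\T)}(\iota)$ and $i_{\nx}(p)\mapsto j_{\nx}(p)$; on the front face both send $s\mapsto j_{C(\T)}(\iota)$ and $v_p\mapsto j_{\nx}(p)$; and on the left face both routes simply deliver $s$ and $v_p$ to their classes in $\QQ_\N$, which is consistent with the realisation of $\QQ_\N$ as the quotient of $\TT(\nxnx)$ by the sum of the additive and multiplicative ideals.

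There is no substantive obstacle here. Theorems~\ref{MLthm} and~\ref{MKthm} were formulated precisely so that distinguished generators are matched to distinguished generators, and every other map in the cube is either a quotient map fixing generators or a functorial map whose effect on generators is tautological. The only piece of bookkeeping that requires any care is confirming that the two unlabelled quotients in the left and front faces do factor the relevant composites as claimed; this is immediate from the presentations of $\TT_\add(\nxnx)$ and $\TT_\mult(\nxnx)$ recorded in Proposition~\ref{prop-present-add}, together with the presentation of $\QQ_\N$ by (T1), (T2), (Q5) and (Q6).
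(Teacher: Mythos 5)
Your proposal is correct and matches the paper's own argument, which simply observes that since every map in the cube has been specified on the generators $s$ and $v_p$ (or their images), each of the six faces is checked by a one-line computation on generators. The extra bookkeeping you note about the unlabelled quotients onto $\QQ_\N$ is handled exactly as you say, via the presentations in Proposition~\ref{prop-present-add}.
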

Indeed, because we have been careful to describe what all our maps do to generators, it is easy to check that each of the six faces commutes.

\end{document}